\DeclareMathOperator{\diag}{diag}
\newtheorem{theorem}{Theorem}
\newtheorem{lemma}{Lemma}
\newtheorem{proposition}{Proposition}
\newtheorem{corollary}{Corollary}
\newtheorem{definition}{Definition}
\newtheorem{cnj}{Conjecture}
\newtheorem{remark}{Remark}
\newtheorem{claim}{Claim}
\DeclareMathOperator{\extr}{extr}
\pgfplotsset{compat=newest}
\tikzstyle{line} = [ draw, -latex']  
\def \OO{ {\mathcal{O}}}
\def \SS{ {\mathcal{S}}}
\def \CP{ {\mathbb{CP}}}
\def \rr{ {\mathbb{R}}}
 \def \ZZ{ {\mathbb{Z}}}
\def \conv{\textup{conv}}
\def \RR{\mathbb{R}}
\newcommand{\cS}{\mathcal{S}}
\newcommand{\cG}{\mathcal{G}}
\newcommand{\cT}{\mathcal{T}}
\newcommand{\cC}{\mathcal{C}}
\newcommand{\cE}{\mathcal{E}}
\newcommand{\cD}{\mathcal{D}}
\title{Convexification of a Separable Function over a Polyhedral Ground Set
}
\author{Santanu S. Dey\thanks{H. Milton Stewart School of Industrial and Systems Engineering, Georgia Institute of Technology
  (\texttt{santanu.dey@isye.gatech.edu}).} 
  \and
  Burak Kocuk\thanks{Industrial Engineering Program,  Sabanc{\i} University
  (\texttt{burakkocuk@sabanciuniv.edu}).}
  }
\begin{document}

\maketitle

% REQUIRED
\begin{abstract}
In this paper, we study the set $\mathcal{S}^\kappa = \{ (x,y)\in\mathcal{G}\times\mathbb{R}^n : y_j = x_j^\kappa , j=1,\dots,n\}$, where $\kappa > 1$ and the \textit{ground set} $\mathcal{G}$ is a nonempty polytope contained in $[0,1]^n$. This nonconvex set is closely related to separable standard quadratic programming and appears as a substructure in potential-based network flow problems from gas and water networks. Our aim is to obtain the convex hull of $\mathcal{S}^\kappa$ or its tight outer-approximation for the special case when the ground set $\mathcal{G}$ is the standard simplex. We propose power cone, second-order cone and semidefinite programming relaxations for this purpose, which are further strengthened by the Reformulation-Linearization Technique and the Reformulation-Perspectification Technique. For $\kappa=2$, we obtain the convex hull of $\mathcal{S}^\kappa$ in the low-dimensional setting. For general $\kappa$, we give approximation guarantees for the power cone representable relaxation, the weakest relaxation we consider. We prove that this weakest relaxation is tight with probability one as $n\to\infty$ when a uniformly generated linear objective is optimized over it. Finally, we provide the results of our extensive computational experiments comparing the empirical strength of several conic programming relaxations that we propose.
\end{abstract}

% REQUIRED
\noindent\textit{Keywords}:
convexification, conic programming, power cone, semidefinite programming,   reformulation-linearization technique, reformulation-perspectification technique

\section{Introduction}

Consider the    set 
\begin{equation*}\label{eq:generic}
        \cS^\kappa = \{ (x,y)\in\cG\times\RR^n : y_j = x_j^\kappa , j=1,\dots,n\},
\end{equation*}
where   $\kappa > 1$ and the \textit{ground set}
 $\cG$ is the following nonempty polytope
\begin{equation*}\label{eq:ground}
        \cG = \{ x \in [0,1]^n :   \ Ax = b, \ Cx \le d  \}.
\end{equation*}
 Here, $A\in \RR^{m\times n}$, $b\in \RR^m$, $C\in \RR^{k\times n}$ and $d\in \RR^k$. Notice that this set is closely related to the convexification of the separable function $\sum_{j=1}^n (\alpha_j x_j + \beta_j x_j^\kappa)$ over a polyhedral ground set $\cG$ and the optimization problem $\min_{(x,y)\in\cS^\kappa}\{\sum_{j=1}^n (\alpha_j x_j + \beta_j y_j)\}$. 
 This substructure appears in many applications and  we provide two motivating examples to study the set $\cS^\kappa$ below.

 The first example arises from a special case of the well-studied standard quadratic programming problem~\cite{bomze2002solving,bomze2018complexity,Liuzzi02012019,gondzio2021global,gokmen2022standard,judice2024two}, called the \textit{separable} standard quadratic programming~\cite{bomze2012separable}, which is an optimization problem of the form 
\begin{equation}\label{eq:separableStQuadProg}
      \min_{x\in\Delta^n} \left \{ \sum_{j=1}^n (\alpha_j x_j + \beta_jx_j^2) \right\},
 \end{equation}
 where $\alpha\in\mathbb{R}^n$,  $\beta\in\mathbb{R}^n$ and  the set $ \Delta^n:=\left\{x\in\RR^n_+ : \sum_{j=1}^n x_j = 1 \right\}$ is the standard simplex in $\mathbb{R}^n$. In our notation, problem~\eqref{eq:separableStQuadProg} is equivalent to 
 \( \min_{(x,y)\in \cS^2 } \{\alpha^Tx+\beta^Ty\} \), 
 with the ground set $\cG=\Delta^n$. Therefore, the set we study is a generalization of the extended formulation of the feasible region of an important problem.

 The second example arises from potential-based flow networks (e.g., gas and water networks~\cite{Gross2019,Rios2015,li2024reformulation,d2015mathematical,okumusoglu2025global,li2025optimizing,borner2025valid}). 
Let us consider a ``node-based" substructure, where we have a   node set $\{0,1,\dots,n\}$ and an edge set $\{(1,0),\dots,(n,0)\}$; see Figure~\ref{fig:network} for an illustration with $n=4$ (we also refer the reader to~\cite{dey2022node} for a similar substructure studied in the context of power systems).  
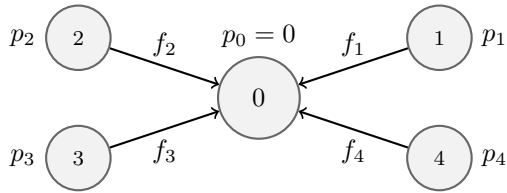
\begin{figure}[h]
\centering
\begin{tikzpicture}[scale=.80,
    node distance=0.5cm,
    source/.style={circle, draw=black!60, fill=black!5, thick, minimum size=4mm, text width=7.5mm, 
                align=center,},
    neigh/.style={circle, draw=black!60, fill=black!5, thick, 
                minimum size=4mm, text width=7.5mm, 
                align=center, font=\footnotesize, inner sep=1pt}
]

% Source  
\node[source, label=above:{$p_0=0$}] (source) at (0,0) {0} ; 

% Neighboring Nodes  
\node[neigh, label=right:{$p_1$}] (neigh1) at (3,1) {1};
\node[neigh, label=left:{$p_2$}] (neigh2) at (-3,1) {2};
\node[neigh, label=left:{$p_3$}] (neigh3) at (-3,-1) {3};
\node[neigh, label=right:{$p_4$}] (neigh4) at (3,-1) {4}; 
 
% Arcs
\draw[<-, thick] (source) -- (neigh1) node [pos=0.5, above] {$f_1$};
\draw[<-, thick] (source) -- (neigh2) node [pos=0.5, above] {$f_2$};
\draw[<-, thick] (source) -- (neigh3) node [pos=0.5, below] {$f_3$};
\draw[<-, thick] (source) -- (neigh4) node [pos=0.5, below] {$f_4$};

\end{tikzpicture}
    \caption{An example node-based structure with $n=4$ adjacent edges to node 0.}
\label{fig:network}
\end{figure}

\noindent
 We will think of node $0$ as a demand node with a positive demand $\delta$ and the others as supply nodes.  Each node $j$ has a nonnegative \textit{potential}, denoted by $p_j$, $0=1,\dots,n$, and we will assume that   the demand node 0 has zero potential, i.e., $p_0=0$. Each edge has a nonnegative \textit{flow}, denoted by $f_j$, which is upper bounded by $\bar f_j$, $j=1,\dots,n$.
%We will  assume that the demand node has zero potential, i.e., $y_0=0$. 
The relationship between the potential differences and the flow value is governed by the law of physics and is of the form $p_j-p_0 = \chi_j f_j^\kappa  $, where $\chi_j$ is a positive number representing a certain physical characteristic of edge~$j$. In addition, the relation $\sum_{j=1}^n f_j=\delta$ models the demand constraint.  By defining $x_j= \frac{f_j}{\bar f_j} $ and $y_j= \frac{p_j}{\chi_j \bar f_j^\kappa} $, we can model this situation in the form of $\cS^\kappa$, where the ground set is $\cG=\{ x\in[0,1]^n : \sum_{j=1}^n \bar f_j x_j = \delta\}$. Note that in the special case where we have $\chi_j=\bar f_j=\delta=1$ for $j=1,\dots,n$, the ground set is again the standard simplex, that is, $\cG=\Delta^n$. 
As opposed to our illustration above,  the directions of  flows on the edges are typically not  fixed in general. However, the analysis of the set $\cS^\kappa$ is still useful as one can deal with a more general case in which the flow directions are variables via a disjunctive formulation that involves several sets of the form $\cS^\kappa$. 
%\[
%  \left\{ (x, y)\in [\underline h, \overline h] \times [\underline f, \overline f] : h_j - h_0 = \beta_j \sign(f_j) f_j^\kappa, j=1,\dots,n ,   \ \sum_{j=1}^n f_j = d \right\} . 
%\]
%It is easy to see that in an extreme point of this set, at least one of the $h_j$ variables must be at one of its variable bounds. 
%
%Below, we first analyze a simple case where $h_0$ is fixed to 0, all the other variables are assumed to be  bounded between 0 and 1,  the demand is equal to $d=1$ and $\kappa=2$.

 In this study, our aim is to find $\conv(\cS^\kappa)$ or a close outer-approximation of $\conv(\cS^\kappa)$ in the space of $x$ and $y$ variables or in an extended space. Although the convexification of functions is an active research area (see, e.g., \cite{sherali1997convex,Rikun1997,ryoo2001analysis,tawarmalani2002convexification,benson2004concave,meyer2005convex,belotti2010valid,bao2015global,tuy2016convex,
adams2019error,he2024convexification,xu2025gaining}), the specific substructure we are interested in is less explored. Arguably,  references \cite{burer2009copositive}, \cite{bomze2012separable} and \cite{gokmen2022standard} are  most closely related  to our study although they only consider the case where the exponent is $\kappa=2$. For instance, the results in \cite{burer2009copositive} imply that $\conv(\cS^2)$ can be obtained as a completely positive cone representable set although this representation is not tractable in general. It is proven in \cite{bomze2012separable} that the optimizing a linear function over $\cS^2$ can be done is polynomial-time although an explicit convex hull description is not provided. In \cite{gokmen2022standard}, it is shown that optimizing a linear function over $\cS^2$ can be performed by solving a doubly nonnegative relaxation when $\beta$ vector is either nonnegative or nonpositive.  

In this paper, we provide several conic representable outer-approximations for the set $\cS^\kappa$ or equivalently conic programming relaxations for the optimization problem $\min_{(x,y)\in\cS^\kappa}\{\sum_{j=1}^n (\alpha_j x_j + \beta_j y_j)\}$. In particular, we propose a  power cone relaxation (hereafter abbreviated as the \textbf{P} relaxation) obtained as a \textit{single row relaxation}, that is, the nonconvex relation $y_j=x_j^\kappa$ over $x_j\in[0,1]$ is simply relaxed as $x_j^\kappa\le y_j\le x_j$. We also construct stronger relaxations using reformulation-linearization technique (RLT), second-order cone programming and semidefinite programming. Then, we specialize our analysis to the important special case when the ground set is the standard simplex. For this setting, we provide convex hull results in low-dimension for $\kappa=2$, and approximation guarantees and a probabilistic tightness analysis for general $\kappa$. We further improve our relaxations using the reformulation-perspectification technique (RPT). Finally, we run an extensive set of computational experiments focusing on the case  when the ground set is the standard simplex and provide empirical evidence of the comparative strength of the nine relaxations we propose.

%Surprisingly, this seemingly weak relaxation has several interesting properties as we demonstrate in the sequel. We further improve this relaxation using techniques

%Reformulation-Perspectification Technique (RPT)    {\color{red}
%(reference?) \cite{zhen2021extension,bertsimascone,dey2025second}
%[are these the best references? they are unpublished], we derive power cone representable valid inequalities when the ground set is the standard simplex}

Below, we list our main contributions and key results from each section:
\begin{itemize}
    \item In Section~\ref{sec:prelim}, we prove that optimizing a linear function over $\cS^\kappa$ is NP-hard in Proposition~\ref{prop:complexityNP-Hard} and propose several conic programming relaxations.
    \item In Section~\ref{sec:main}, we analyze the case where the ground set is the standard simplex. 
    \begin{itemize}
        \item For the special case $\kappa=2$, we provide two convex hull results in low-dimensional setting (Theorem~\ref{thm:exactness-kappa=2-stSimplexn=2} for $n=2$ and Corollary~\ref{cor:CPP} for $n=3$) and give some sufficient conditions under which a doubly nonnegative relaxation of the problem 
    $\min_{(x,y)\in\cS^\kappa}\{\sum_{j=1}^n (\alpha_j x_j + \beta_j y_j)\}$ is exact (Theorem~\ref{thm:exactness-kappa=2-specialCases}) for general $n$.
         \item For general $\kappa$, we provide both distance-based (Propositions~\ref{prop:dbound-stSimplex-ub} and \ref{prop:dbound-stSimplex-lb})  and objective-function based (Proposition~\ref{prop:Obound-stSimplex-kappaGeneral}) approximation results for the \textbf{P} relaxation. Interestingly, this weakest relaxation we consider  is tight with high probability when the dimension $n$ is sufficiently large, as proven in Theorem~\ref{thm:POWexactUniform}.
    \end{itemize}
    \item In Section~\ref{sec:computation}, we report the results of our computational experiments and verify several key observations from the previous sections empirically.
    % Theorem~\ref{thm:exactness-kappa=2-stSimplexn=2}: For $n=2$, the convex hull of $\cS^2$ is second-order conic representable.
    % \item Theorem~\ref{thm:exactness-kappa=2-stSimplexn=3}: For $n=3$, the convex hull of $\cS^2$ is semidefinite representable.
    % \item : For general $n$, we prove that optimizing a linear function over SDP+RLT gives the same value as optimizing over $\cS^2$    for some cases. We conjecture that this is true in general. 
    % \item Theorem~\ref{thm:POWexactUniform}: We show that optimizing a random linear function with iid Unif generated coefficients over  the POW relaxation gives the same value as optimizing over $\cS^\kappa$   for large $n$ with high probability for any $\kappa>1$.
\end{itemize}

\noindent\textbf{Notation}: Throughout the paper, $\RR^n$, $\RR_+^n$, $\mathbb{S}^n$, $\mathbb{S}_+^n$, $\mathbb{CP}^n$ are respectively the set of $n$-dimensional real vectors, the set of  $n$-dimensional nonnegative real vectors, the set of  $n\times n$ real symmetric matrices, the set of  $n\times n$ real positive semidefinite matrices, the set of  $n\times n$ completely positive  matrices, and $e$ is the all-ones vector of appropriate dimension. Also, let $e^j$ be the $j$-th standard unit vector. {Given a square matrix $X$, we denote the vector of diagonal of $X$ by $\diag(X)$.}
  For a set $\SS \subseteq \mathbb{R}^n$, we denote its convex hull and the set of extreme points
  %and interior
  as $\text{conv}(\SS)$ and $\textup{extr}(\SS)$,
  %and $\text{int}(X)$, 
  respectively.

  %We denote the  all-one vector with $e$ and its dimension will always be  clear from the context.

%  We denote the   standard simplex and  the full-dimensional simplex in $\mathbb{R}^n$ by $ \Delta^n :=\{x\in\RR^n_+ : e^Tx = 1\}$ and $\Delta^n_+:=\{x\in\RR^n_+ : e^Tx \le 1\}$, respectively.

%We also denote the projection of $X$ onto the space of $x$-variables as $\text{proj}_x(X)$. 

\section{General Ground Sets}
\label{sec:prelim}

In this section, we focus on a general ground set $\cG$. We first prove that it is NP-Hard to optimize a linear function over the set $\cS^\kappa$ in Section~\ref{sec:complexity}. Then, we propose several conic  relaxations for the set $\cS^\kappa$ in Section~\ref{sec:relaxations}. 

\subsection{Complexity}
\label{sec:complexity}

\begin{proposition}\label{prop:complexityNP-Hard}
    Optimizing a linear function over %the set
    $\cS^\kappa$ is NP-Hard for $\kappa>1$.
\end{proposition}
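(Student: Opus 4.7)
The plan is to reduce from the \textsc{Partition} problem, which is well known to be NP-hard. Given positive integers $a_1,\dots,a_n$ with $\sum_{j=1}^n a_j = 2M$, \textsc{Partition} asks whether there exists $S\subseteq\{1,\dots,n\}$ with $\sum_{j\in S} a_j = M$, or equivalently, whether the system
\begin{equation*}
    \sum_{j=1}^n a_j x_j = M, \qquad x\in\{0,1\}^n
\end{equation*}
is feasible.

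I would take the ground set to be the polytope
\begin{equation*}
    \cG = \Bigl\{ x\in[0,1]^n \,:\, \sum_{j=1}^n a_j x_j = M \Bigr\},
\end{equation*}
which fits the template in the introduction and is always nonempty (for instance, $x_j=\tfrac{1}{2}$ is feasible since $\sum a_j = 2M$). I would then consider the linear optimization problem
\begin{equation*}
   \min_{(x,y)\in\cS^\kappa} \sum_{j=1}^n (x_j - y_j),
\end{equation*}
which in original variables reads $\min_{x\in\cG}\sum_{j=1}^n (x_j - x_j^\kappa)$. This clearly is an instance of the claimed optimization problem with $\alpha_j=1$ and $\beta_j=-1$.

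The key (and essentially only) fact needed is the following elementary observation: for $\kappa>1$ and $t\in[0,1]$, the function $t-t^\kappa$ is nonnegative, and equals zero if and only if $t\in\{0,1\}$. Applied coordinatewise, this shows that the objective is nonnegative on $\cS^\kappa$ and attains value $0$ if and only if there exists $x\in\cG\cap\{0,1\}^n$, i.e., if and only if the \textsc{Partition} instance is a YES-instance. Since the reduction is polynomial in the input size, an efficient algorithm for optimizing a linear function over $\cS^\kappa$ would solve \textsc{Partition} in polynomial time, establishing NP-hardness.

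The construction is standard and there is no real technical obstacle; the only points that require care are (i) to verify the elementary inequality $t\ge t^\kappa$ on $[0,1]$ with equality characterization and (ii) to note that $\cG$ is a well-defined polytope contained in $[0,1]^n$ as required by the definition of the ground set, so that the reduction yields a bona fide instance of the problem in question.
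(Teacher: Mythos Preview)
Your proof is correct and follows essentially the same approach as the paper: the paper reduces from \textsc{Subset-Sum} (of which \textsc{Partition} is a special case) using the identical objective $\sum_j(x_j-y_j)$ and the same observation that $t-t^\kappa\ge0$ on $[0,1]$ with equality precisely at $t\in\{0,1\}$. Your choice of \textsc{Partition} has the small advantage of guaranteeing $\cG\neq\emptyset$ via $x=\tfrac12 e$, a point the paper leaves implicit.
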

\begin{proof}
Consider the SUBSET-SUM Problem, which is known to be NP-Hard~\cite{garey2002computers}:
 Given $a\in\ZZ^n_{+}$ and  $b\in\ZZ_+$, does there exist  $J \subseteq\{1,\dots,n\}$ such that $\sum_{j\in J} a_j = b$?

 Consider the following problem, which minimizes a linear function over the set~$\cS^\kappa$:
 \[
 (Q): 
 \min_{x\in[0,1]^n,y\in\RR^n} \left\{\sum_{j=1}^n (x_j - y_j) : a^Tx=b, \  y_j=x_j^\kappa, j=1,\dots,n \right\}.
 \]
 Notice that a SUBSET-SUM instance is feasible if and only if the optimal value of~$(Q)$ is zero. Hence, the result follows.
 \end{proof}
 
% \begin{theorem}\label{thm:complexity}
%     Optimizing a linear 
% \end{theorem}

Although optimizing a linear function over the set $\cS^\kappa$ is NP-Hard in general, there is a known polynomially solvable case due to~\cite{bomze2012separable} when 
$\kappa=2$ and $(a,b)=(e,1)$, that is, $\cG=\Delta^n$. We note that the algorithm proposed in this reference can be adapted to any $\kappa>1$.

% \begin{conjecture}\label{conj:optimizeStSimplexkappa}
%     For any $\kappa > 1$ and $\cG=\Delta^n$, optimizing a linear function over $\cS^\kappa$ is polynomial time.
% \end{conjecture}

\subsection{Relaxations}
\label{sec:relaxations}

%\subsection{POW Relaxation}\label{sec:relax-pow}

In this section, we propose several conic  relaxations  for the set $\cS^\kappa$. We start with a definition.
\begin{definition}[Power cone]
    The power cone in $\mathbb{R}^3$ parameterized with $\gamma\in(0,1)$ is defined as
    \[
    \mathcal{C}^{\gamma} := \{ x\in\mathbb{R}_+^2 \times \mathbb{R}: x_1^\gamma x_2^{1-\gamma} \ge |x_3| \}.
    \]
\end{definition} 
\noindent
For example, the set defined by inequalities $y \ge x^\kappa ,  y \ge 0$ is equivalent to $(y, 1, x) \in \mathcal{C}^{1/\kappa} $ for $\kappa > 1$.
%In the special case $\kappa=2$, this set is also SOC.
In the special case with $\kappa=2$, $\mathcal{C}^{1/2}$ is the rotated second-order cone in $\RR^3$.

The simplest relaxation we construct for the set $\cS^\kappa$ involves applying a \textit{single row relaxation} for each nonconvex constraint separately and it utilizes the power cone as defined above to convexify each constraint. 
We will call this relaxation as \textit{the power cone relaxation} (or the $\mathbf{P}$ relaxation, in short) and define it as below:
\begin{equation*}\label{eq:pow}
        \cS^\kappa_P = \{ (x,y)\in \cG\times\RR^n : x_j \ge y_j \ge x_j^\kappa  , j=1,\dots,n  \}.
\end{equation*}
The $\mathbf{P}$ relaxation can be further strengthened using other approaches once a lifted matrix variable $X=xx^T \in \mathbb{S}^n$ is used. Below, we will introduce three of them.

% \begin{equation*}\label{eq:socp}
%         \cS_p = \{ (x,y)\in[0,1]^n\times[0,1]^n : y_j \ge x_j^2 \ge y_j, j=1,\dots,n, \ Ax = b, \ Cx \le d  \},
% \end{equation*}
The first approach to strengthen the $\mathbf{P}$ relaxation  utilizes RLT~\cite{sherali2013reformulation}. For this purpose, let us consider the following set of constraints obtained by using the first level in the RLT hierarchy
% \begin{equation*}\label{eq:rlt}
%         \cS_R  = \left\{ (x,y)\in\cG\times\RR^n : \exists X \in \mathbb{S}^n:   \  \eqref{eq:rlt-all} \right \},
% \end{equation*}
% where
\begin{subequations}\label{eq:rlt-all}
\begin{align}
& \max\{x_i+x_j-1,0\} \le X_{ij} \le \min\{x_i,x_j\} &1&\le i \le j \le n \label{eq:rlt-mccormick} \\
& AX - bx^T = 0 , \ \ Axe^T -AX -be^T +b x^T = 0 \label{eq:rlt-bounds-equality} \\
& CX - dx^T \le 0 , \ \, Cxe^T -CX -de^T + d x^T \le  0 \label{eq:rlt-bounds-inequality} \\
& AXA^T - Axb^T - bx^TA^T + bb^T = 0 \label{eq:rlt-equality-equality}\\
& CXC^T - Cxd^T - dx^TC^T + dd^T \ge  0 \label{eq:rlt-inequality-inequality} \\
& AXC^T - Axd^T - bx^T C^T +bd^T = 0 \label{eq:rlt-equality-inequality} .  
\end{align}
\end{subequations}
 
% \begin{subequations}\label{eq:rlt-all-nonconvex versions}
% \begin{align}
% &   X_{ij}  = x_ix_j &1&\le i \le j \le n \\
% & (Ax-b)x^T = 0 , (Ax-b)(e-x)^T = 0 \\
% & (Cx-d)x^T \le 0 , (Cx-d)(e-x)^T \le 0 \\
% & (Ax-b)(Ax-b)^T = 0 \\
% & (Cx-d)(Cx-d)^T \ge 0 \\
% & (Ax-b)(Cx-d)^T \le 0 
% \end{align}
% \end{subequations}

The second approach to strengthen the $\mathbf{P}$ relaxation  utilizes semidefinite programming and involves the following
   constraints:
\begin{equation}\label{eq:sdp}
     %   \cS_S  =
           \ X \ge 0,
        \   \begin{bmatrix}
            X & x \\
            x^T & 1
        \end{bmatrix} \succeq 0 .
\end{equation}
% Notice that $\cS_p^2 \subseteq \cS_s $. 

The third approach to strengthen the $\mathbf{P}$ relaxation  utilizes second-order cone programming and involves adding the positive semidefinite conditions for the $2\times2$ minors of the matrix $\begin{bmatrix}
            X & x \\
            x^T & 1
        \end{bmatrix}$ only.
For this purpose, we consider the   constraints
\begin{equation}\label{eq:socp}
\begin{split}
     %   \cS_s  = \big\{ (x,y)\in\cG\times\RR^n : \exists 
        & \ X \ge 0,
        \  x_j^2 \le X_{j} \le x_j, j=1,\dots,n , \  X_{ij}^2 \le X_{ii}X_{jj}, \ 1\le i < j \le n .
     %   \big \}.
        \end{split}
\end{equation}
% Notice that $\cS_p^2 \subseteq \cS_s $. 

% \diag(Y) = y,  
In order to make the connection between the $\mathbf{P}$ relaxation  and the three sets of constraints defined above (i.e., constraints~\eqref{eq:rlt-all},~\eqref{eq:sdp} and~\eqref{eq:socp}) stronger, we make a simple observation: Note that we have $X_{jj}=x_j^2$ and $y_j=x_j^\kappa$, implying that $X_{jj}=y_j^{2/\kappa}$. This observation allows us to relate $y_j$ variables with $X_{jj}$ variables through the following constraints:
\begin{equation}\label{eq:transformation}
        \begin{cases}
          y_j \ge X_{jj} \ge y_j^{2/\kappa}, j=1,\dots,n   & \text{ if } \kappa < 2 \\
   X_{jj} = y_j, j=1,\dots,n   & \text{ if } \kappa = 2 \\
           X_{jj} \ge y_j \ge  X_{jj}^{\kappa/2}, j=1,\dots,n 
           & \text{ if } \kappa > 2 \\
        \end{cases} .
\end{equation} 

%\textcolor{red}{TO discuss a bit} 
We are ready to formally introduce the six relaxations we define for the set~$\cS^\kappa$:
\begin{itemize}
    \item The \textbf{P} relaxation:  $ \cS^\kappa_P$
    \item The \textbf{PR} relaxation: $ \cS^\kappa_{P,R}:= \{ (x,y)\in\cS^\kappa_P : \exists X\in\mathbb{S}^n : \eqref{eq:rlt-all}, \eqref{eq:transformation}\} $
    \item The \textbf{PS} relaxation: $ \cS^\kappa_{P,S}:=  \{ (x,y)\in\cS^\kappa_P : \exists X\in\mathbb{S}^n : \eqref{eq:sdp}, \eqref{eq:transformation}\} $ 
    \item The \textbf{Ps} relaxation: $ \cS^\kappa_{P,s}:= \{ (x,y)\in\cS^\kappa_P : \exists X\in\mathbb{S}^n : \eqref{eq:socp}, \eqref{eq:transformation}\} $
    \item The \textbf{PRS} relaxation: $ \cS^\kappa_{P,R,S}:=   \{ (x,y)\in\cS^\kappa_P : \exists X\in\mathbb{S}^n : \eqref{eq:rlt-all},\eqref{eq:sdp}, \eqref{eq:transformation}\} $ 
    \item The \textbf{PRs} relaxation: $ \cS^\kappa_{P,R,s}:=\{ (x,y)\in\cS^\kappa_P : \exists X\in\mathbb{S}^n : \eqref{eq:rlt-all},\eqref{eq:socp}, \eqref{eq:transformation}\} $
\end{itemize}

Let us now discuss some basic properties of these relaxations. The lemma below characterizes the extreme points of the   \textbf{P} relaxation:

\begin{lemma}\label{lem:extrPOW}
    Let $(\hat x,\hat y)\in\extr(\cS^\kappa_P)$. Then, we have $\hat y_j\in\{\hat x_j, \hat x_j^\kappa\}$, $j=1,\dots,n$.
\end{lemma}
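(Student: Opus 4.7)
The plan is to prove the lemma by a standard perturbation argument on the $y$-coordinates, exploiting the fact that the defining constraints on $y$ decouple across components and that $\mathcal{G}$ involves only $x$.

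First, I would suppose for contradiction that some $j$ satisfies $\hat x_j^\kappa < \hat y_j < \hat x_j$, i.e., $\hat y_j$ lies strictly in the interior of the interval $[\hat x_j^\kappa, \hat x_j]$. Pick $\varepsilon > 0$ small enough that $\hat y_j + \varepsilon \le \hat x_j$ and $\hat y_j - \varepsilon \ge \hat x_j^\kappa$. Define the two perturbed points $(\hat x, \hat y \pm \varepsilon e^j)$. Since $\mathcal{G}$ constrains only $x$, the ground-set membership is unaffected; since the box constraint $x_i^\kappa \le y_i \le x_i$ for every $i \neq j$ is unaffected by changing $\hat y_j$; and since the constraint on coordinate $j$ is preserved by choice of $\varepsilon$, both perturbed points lie in $\mathcal{S}^\kappa_P$. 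Then
\[
(\hat x, \hat y) = \tfrac{1}{2}(\hat x, \hat y + \varepsilon e^j) + \tfrac{1}{2}(\hat x, \hat y - \varepsilon e^j),
\]
which expresses $(\hat x, \hat y)$ as a proper convex combination of two distinct feasible points, contradicting the assumption that $(\hat x, \hat y)$ is extreme. Hence for each $j$ we must have $\hat y_j = \hat x_j^\kappa$ or $\hat y_j = \hat x_j$.

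I do not anticipate a real obstacle here: the argument is essentially one line, relying only on the separability of the $y$-constraints and the fact that $\mathcal{G}$ places no restriction on $y$. The only minor bookkeeping is the choice of $\varepsilon$, which is trivial since $\hat y_j - \hat x_j^\kappa$ and $\hat x_j - \hat y_j$ are both strictly positive by assumption.
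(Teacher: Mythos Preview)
Your argument is correct: the perturbation $(\hat x,\hat y\pm\varepsilon e^j)$ stays in $\cS^\kappa_P$ because the $y$-constraints decouple and $\cG$ involves only $x$, so any strictly interior $\hat y_j$ contradicts extremality. The paper states Lemma~\ref{lem:extrPOW} without proof, treating it as immediate; your write-up is precisely the standard justification the authors have in mind.
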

Interestingly, adding the SDP constraint~\eqref{eq:sdp}   to the     \textbf{P} relaxation does not further strengthen this relaxation as proven below.
%The following result shows that the POW+SDP relaxation is equal to the POW relaxation.
\begin{proposition}\label{prop:POW=POW+SDP}
Let $\kappa>1$. Then,
    $\cS^\kappa_P = \cS^\kappa_{P,S}$.
\end{proposition}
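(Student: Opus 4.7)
\textbf{Proof Proposal for Proposition~\ref{prop:POW=POW+SDP}.}

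The inclusion $\cS^\kappa_{P,S} \subseteq \cS^\kappa_P$ is immediate from the definitions, so the plan is to establish the reverse inclusion by an explicit construction. Given $(x,y) \in \cS^\kappa_P$, I will exhibit a symmetric $n \times n$ matrix $X$ that satisfies the entrywise nonnegativity condition, the PSD lifting condition in~\eqref{eq:sdp}, and the diagonal/linking condition~\eqref{eq:transformation}. The natural candidate is a rank-one-plus-diagonal correction
\[
X = xx^T + D, \qquad D = \diag(d_1,\dots,d_n) \text{ with } d_j \ge 0.
\]
For any such $X$, nonnegativity is immediate (since $x \ge 0$), and the block PSD condition in~\eqref{eq:sdp} reduces via a Schur complement (using that the $(n+1,n+1)$ entry equals $1$) to $X \succeq xx^T$, which holds automatically because $D \succeq 0$. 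So the only remaining task is to choose $d_j$ to satisfy~\eqref{eq:transformation}.

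The key step is the choice of diagonal. I propose setting
\[
X_{jj} \;=\; \begin{cases} y_j & \text{if } 1 < \kappa \le 2, \\ y_j^{2/\kappa} & \text{if } \kappa \ge 2, \end{cases}
\qquad d_j = X_{jj} - x_j^2.
\]
To confirm $d_j \ge 0$: in the case $\kappa \le 2$, using $x_j \in [0,1]$ one has $x_j^2 \le x_j^\kappa \le y_j$; in the case $\kappa \ge 2$, the inequality $y_j \ge x_j^\kappa$ raised to the $2/\kappa$ power yields $y_j^{2/\kappa} \ge x_j^2$. Then I verify~\eqref{eq:transformation} by plugging in: for $\kappa < 2$ the double inequality $y_j \ge X_{jj} \ge y_j^{2/\kappa}$ becomes $y_j \ge y_j \ge y_j^{2/\kappa}$, which holds since $y_j \in [0,1]$ and $2/\kappa > 1$; for $\kappa = 2$ it is trivial; for $\kappa > 2$ the requirement $X_{jj} \ge y_j \ge X_{jj}^{\kappa/2}$ becomes $y_j^{2/\kappa} \ge y_j \ge y_j$, which holds since $y_j \in [0,1]$ and $2/\kappa \le 1$.

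The main conceptual obstacle, rather than a computational one, is recognizing that the PSD lifting in~\eqref{eq:sdp} cannot force any constraint on $y$ beyond what is already implied by the single-row bounds $x_j^\kappa \le y_j \le x_j$: the entries $X_{ij}$ for $i \ne j$ and the diagonal slack $d_j$ give enough degrees of freedom to absorb the PSD and nonnegativity requirements separately for each coordinate $j$, so the relaxation decouples. Once this is observed, the construction above makes the argument routine, and no other component of $(x,y)$ needs to be touched.
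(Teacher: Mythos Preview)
Your proof is correct and uses the same core construction as the paper: set $X = xx^T + D$ for a nonnegative diagonal $D$, so that nonnegativity and the Schur-complement PSD condition are automatic, and then choose the diagonal to satisfy~\eqref{eq:transformation}. The paper proceeds slightly differently in that it first restricts to extreme points of $\cS^\kappa_P$ via Lemma~\ref{lem:extrPOW} (where $\hat y_j \in \{\hat x_j, \hat x_j^\kappa\}$) and defines $D_{jj}$ casewise depending on which endpoint $\hat y_j$ hits; your version instead works for an arbitrary $(x,y)\in\cS^\kappa_P$ by always selecting $X_{jj}$ at the upper end of the interval allowed by~\eqref{eq:transformation}. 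Your route is marginally more elementary since it bypasses the extreme-point lemma, while the paper's route makes the structure of where~\eqref{eq:transformation} can bind slightly more transparent; substantively the two arguments are the same.
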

\begin{proof}
    Note that $\cS^\kappa_P \supseteq \cS^\kappa_{P,S}$. Suppose that $(\hat x, \hat y )\in\extr(\cS^\kappa_P)$. In order to prove the assertion of the proposition, it suffices to show that $(\hat x,\hat y)\in\cS^\kappa_{P,S}$. Due to Lemma~\ref{lem:extrPOW}, we know that  $\hat y_j\in\{\hat x_j, \hat x_j^\kappa\}$, $j=1,\dots,n$. 
    Now, consider the matrix $\hat X := \hat x \hat x^T + D$, where $D\in\mathbb{S}^n$ is a diagonal matrix with entries
    \[
        D_{jj} =
    \begin{cases}
        0 & \text{ if } \hat y_j = \hat x_j^\kappa \\
        \hat y_j-\hat x_j^2 & \text{ if } \hat y_j = \hat x_j \text{ and } \kappa\le2 \\
        \hat y_j^{2/\kappa}-\hat x_j^2 & \text{ if } \hat y_j = \hat x_j \text{ and } \kappa>2 
    \end{cases}.
    \] Observe that both $ \hat x \hat x^T$ and $ D$ are doubly nonnegative matrices and $(\hat x, \hat y,\hat X) $ satisfies~\eqref{eq:transformation}. Hence, the result follows.
\end{proof}

\begin{corollary}\label{cor:POW=POW+SOC}
Let $\kappa > 1$. Then,
    $\cS^\kappa_P = \cS^\kappa_{P,s}$.
\end{corollary}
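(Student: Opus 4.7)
The plan is to mirror the proof of Proposition~\ref{prop:POW=POW+SDP}. Since $\cS^\kappa_{P,s}\subseteq\cS^\kappa_P$ is immediate from the definitions, only the reverse inclusion is at issue. One cannot simply quote Proposition~\ref{prop:POW=POW+SDP}, because the SOCP constraints~\eqref{eq:socp} demand the extra upper bound $\hat X_{jj}\le\hat x_j$ that is absent from the SDP formulation~\eqref{eq:sdp}; in the case $\kappa>2$ with $\hat y_j=\hat x_j$, the matrix built there has $\hat X_{jj}=\hat x_j^{2/\kappa}>\hat x_j$ on $(0,1)$, so a different diagonal correction is required.

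The first step is to reduce to $(\hat x,\hat y)\in\extr(\cS^\kappa_P)$ via Lemma~\ref{lem:extrPOW}, so that $\hat y_j\in\{\hat x_j,\hat x_j^\kappa\}$ for every $j$. I would then set $\hat X=\hat x\hat x^T+D$ with $D$ diagonal and
\[
D_{jj}=\begin{cases}0 & \text{if } \hat y_j=\hat x_j^\kappa,\\ \hat x_j-\hat x_j^2 & \text{if } \hat y_j=\hat x_j,\end{cases}
\]
so that $\hat X_{jj}\in\{\hat x_j^2,\hat x_j\}$ and the bound $\hat x_j^2\le\hat X_{jj}\le\hat x_j$ holds by inspection. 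The entrywise nonnegativity $\hat X\ge 0$ and the $2\times 2$ SOC inequalities $\hat X_{ij}^2\le\hat X_{ii}\hat X_{jj}$ then follow from the same identity $\hat X_{ii}\hat X_{jj}-\hat X_{ij}^2=\hat x_i^2 D_{jj}+\hat x_j^2 D_{ii}+D_{ii}D_{jj}\ge 0$ that already appears in the proof of Proposition~\ref{prop:POW=POW+SDP}.

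The main obstacle -- and the only reason the SDP argument does not transfer verbatim -- is verifying the transformation~\eqref{eq:transformation} when $\kappa>2$ and $\hat y_j=\hat x_j$: with the new, smaller value $\hat X_{jj}=\hat x_j$, the requirement $\hat X_{jj}^{\kappa/2}\le\hat y_j$ reduces to $\hat x_j^{\kappa/2}\le\hat x_j$, which holds on $[0,1]$ since $\kappa/2>1$. The remaining cases are mechanical because $\hat X_{jj}$ takes only the values $\hat x_j^2$ or $\hat x_j$. Finally, every constraint in~\eqref{eq:socp} and~\eqref{eq:transformation} is convex in $(x,y,X)$, so the projection $\cS^\kappa_{P,s}$ is convex, and containing every extreme point of the compact convex set $\cS^\kappa_P$ forces $\cS^\kappa_{P,s}\supseteq\cS^\kappa_P$, completing the proof.
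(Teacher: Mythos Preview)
Your proof is correct, and in fact more careful than what the paper provides. The paper states this result as a bare corollary of Proposition~\ref{prop:POW=POW+SDP} with no separate argument, presumably expecting the same extreme-point construction to carry over verbatim. You correctly identify a genuine subtlety the paper glosses over: the SOCP formulation~\eqref{eq:socp} contains the upper bound $X_{jj}\le x_j$, which is \emph{not} a consequence of the SDP constraint~\eqref{eq:sdp}, and the diagonal matrix $D$ from the proof of Proposition~\ref{prop:POW=POW+SDP} violates this when $\kappa>2$ and $\hat y_j=\hat x_j$ (since then $\hat X_{jj}=\hat x_j^{2/\kappa}>\hat x_j$ on $(0,1)$). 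Your modified choice $D_{jj}=\hat x_j-\hat x_j^2$ repairs this, and your verification of~\eqref{eq:transformation} in the case $\kappa>2$ via $\hat x_j^{\kappa/2}\le\hat x_j$ is exactly the missing check.

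One small inaccuracy in your write-up: the identity $\hat X_{ii}\hat X_{jj}-\hat X_{ij}^2=\hat x_i^2 D_{jj}+\hat x_j^2 D_{ii}+D_{ii}D_{jj}$ does not actually appear in the paper's proof of Proposition~\ref{prop:POW=POW+SDP}; the paper simply observes that $\hat x\hat x^T$ and $D$ are both doubly nonnegative, so their sum is, and the full PSD condition follows. Your explicit identity is of course correct and gives a direct verification of the $2\times 2$ minor conditions, but you should not attribute it to the paper.
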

Due to Proposition~\ref{prop:POW=POW+SDP} and Corollary~\ref{cor:POW=POW+SOC}, we will not study the   \textbf{PS} relaxation and the   \textbf{Ps} relaxation.

Unlike the SDP constraint~\eqref{eq:sdp}, adding the RLT constraints~\eqref{eq:rlt-all} strengthens the   \textbf{P} relaxation. Perhaps more interestingly, the addition of the SDP constraint~\eqref{eq:sdp} and the RLT constraints~\eqref{eq:rlt-all} together further strengthens  the   \textbf{PR} relaxation. The proposition below formalizes this statement.
\begin{proposition}\label{prop:relaxOrderStrict}
We have 
    $\cS^\kappa_P \supseteq \cS^\kappa_{P,R} \supseteq \cS^\kappa_{P,R,s} \supseteq \cS^\kappa_{P,R,S}$, and all the set containment relations can be strict.
\end{proposition}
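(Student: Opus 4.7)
The argument has two parts: establishing the chain of containments and showing that each one can be strict. The first two inclusions, $\cS^\kappa_P \supseteq \cS^\kappa_{P,R} \supseteq \cS^\kappa_{P,R,s}$, are immediate from the definitions, since each successive set is obtained by imposing additional constraints on the auxiliary matrix $X$. For the third, $\cS^\kappa_{P,R,s} \supseteq \cS^\kappa_{P,R,S}$, I would verify that all the SOC constraints in~\eqref{eq:socp} are implied by~\eqref{eq:rlt-all} together with~\eqref{eq:sdp}: nonnegativity $X \ge 0$ is common to both relaxations; the bounds $x_j^2 \le X_{jj}$ and $X_{ij}^2 \le X_{ii}X_{jj}$ follow from nonnegativity of the $2\times 2$ principal minors of $\begin{bmatrix} X & x \\ x^T & 1 \end{bmatrix}$, a consequence of its PSDness; and the upper bound $X_{jj} \le x_j$ is the diagonal instance of the McCormick constraint $X_{ij} \le \min\{x_i,x_j\}$ from~\eqref{eq:rlt-mccormick}.

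For the strict inclusions, I would produce explicit low-dimensional instances with $\kappa=2$. For $\cS^\kappa_P \supsetneq \cS^\kappa_{P,R}$, take $\cG = \Delta^2$ and $(x,y) = ((1/2,1/2),(1/4,1/2))$: the power cone constraints $x_j^2 \le y_j \le x_j$ hold, yet the RLT equality $AX = bx^T$ combined with $X_{jj} = y_j$ would simultaneously require $X_{12} = x_1 - X_{11} = 1/4$ and $X_{12} = x_2 - X_{22} = 0$, which is impossible. For $\cS^\kappa_{P,R} \supsetneq \cS^\kappa_{P,R,s}$, take $\cG = \Delta^3$ and $(x,y) = ((1/3,1/3,1/3),(1/9,1/9,1/3))$: here the RLT equalities together with $X \ge 0$ force $X_{13}=X_{23}=0$ and $X_{12}=2/9$, and this $X$ satisfies~\eqref{eq:rlt-all} but violates the SOC constraint $X_{12}^2 \le X_{11} X_{22}$ since $4/81 > 1/81$.

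The main obstacle is the last strict containment $\cS^\kappa_{P,R,s} \supsetneq \cS^\kappa_{P,R,S}$, which requires exhibiting a point whose RLT-feasible extensions $X$ all satisfy the pairwise $2\times 2$ SOC tests but none can be completed to a globally PSD matrix. My plan is to use a ground set whose equality constraints uniquely pin down $X$ given $(x,y)$, and then search for asymmetric $y$ at which the centered matrix $X - xx^T$ acquires a negative eigenvalue while all SOC inequalities remain valid. A natural candidate is $\cG = \{x \in [0,1]^3 : e^T x = 3/2\}$ at $x=(1/2,1/2,1/2)$: here $Xe = (3/2)\,x$ forces $e$ into the null space of $X - xx^T$, so the latter has rank at most $2$ and its PSDness reduces to checking a single $2\times 2$ form on $e^\perp$ that depends affinely on $y$; a sufficiently asymmetric choice of $y$ in the power cone region renders this form indefinite while keeping all pairwise SOC inequalities valid, delivering the desired separation.
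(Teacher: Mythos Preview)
Your containment argument and the first two strict separations are correct and essentially identical to the paper's (the paper uses $\hat y=(1/2,1/4)$ where you use $(1/4,1/2)$, and the $n=3$ example is the same). The only substantive difference is the third strict inclusion.

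For $\cS^\kappa_{P,R,s} \supsetneq \cS^\kappa_{P,R,S}$ the paper stays in $\cG=\Delta^3$ and exhibits the explicit witness $\hat x=(1/2,1/3,1/6)$, $\hat y=(1/4,1/8,1/30)$: the RLT equations $Xe=x$, $\diag(X)=\hat y$ pin $X$ down uniquely as $\hat X=\tfrac{1}{240}\begin{bsmallmatrix}60&39&21\\39&30&11\\21&11&8\end{bsmallmatrix}$, whose three $2\times2$ principal minors are positive but whose determinant is $-1/240^2$, so the SOC tests pass while PSD fails. Your plan with $\cG=\{x\in[0,1]^3:e^Tx=3/2\}$ at $x=(1/2,1/2,1/2)$ is sound---for instance $y=(1/2,2/5,1/4)$ gives $X=\tfrac1{20}\begin{bsmallmatrix}10&1&4\\1&8&6\\4&6&5\end{bsmallmatrix}$ with all $2\times2$ minors positive but $\det X<0$---but you leave this as a search rather than producing the witness, so the step is not yet a proof. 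Two remarks: first, your null-space observation is not special to your ground set---whenever a single equality $a^Tx=b$ generates the RLT constraint $Xa=bx$, one gets $(X-xx^T)a=0$, so the same rank-reduction holds over $\Delta^3$ with $a=e$ and you need not have left the simplex; second, the paper's choice of an \emph{asymmetric} $x$ in $\Delta^3$ buys a bit of extra room (all $2\times2$ minors are strictly positive by margins that are easy to verify by hand), whereas at the barycenter the SOC and PSD boundaries are closer together and one has to choose $y$ somewhat carefully.
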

We prove this proposition in Section~\ref{sec:setContainmentExamples} with suitable examples with  $\cG=\Delta^n$ and $\kappa=2$.

% \begin{itemize}
%     \item POW:  $ \cS^2_p$
% %    \item RLT:  $ \cS_r \cap\{ (x,y)\in\cG\times\RR^n : x\ge y\}$ {\color{blue} we may simply omit this set}
%     \item POW+RLT: $ \cS^2_{p,r}:= \cS^2_p \cap \cS_r \cap \cS_t^2 \cap \{ (x,y)\in\cG\times\RR^n : \exists X \in \mathbb{S}^n: X_{ij}^2 \le X_{ii}X_{jj}, \ 1\le i < j \le n \} $
%     \item POW+SDP: $ \cS^2_{p,s}:= \cS^2_p \cap \cS_s \cap \cS_t^2 := \cS_s \cap \cS_t^2$   (since $ \cS^2_s \subseteq  \cS^2_p$)
%     \item POW+RLT+SDP: $ \cS^2_{p,r,s}:= \cS^2_p \cap \cS_r\cap \cS_s \cap \cS_t^2  $
% \end{itemize}
% %By construction, we have that $ \cS^2_s \subseteq  \cS^2_p$. also principle 2by2

%\subsection{Case with $\kappa = 2$}\label{sec:relax-kappa=not2}

%%%%%%%%%%%%%%%%%%%%%%%%%%%%%%%%%%%%%%%%%%%%%%%%%%%%%%%%%%%%%%%%%%%%

\section{The Standard Simplex as the Ground Set}
\label{sec:main}

% One way to prove the relation  $\conv(\cS^\kappa) = \cT$ is to show  that
% \[
% \min_{(x,y)\in \cS^\kappa}\{ \alpha^Tx+\beta^Ty\} = 
% \min_{(x,y)\in \cT}\{ \alpha^Tx+\beta^Ty\} , 
% \]
% for any $(\alpha,\beta)\in\RR^n\times\RR^n$ such that $ \|(\alpha,\beta)\|=1$. 

Let $\cT$ be a relaxation of the nonconvex set $\cS^\kappa$.
We will use two metrics to measure the quality of the relaxation~$\cT$ for the nonconvex set~$\cS^\kappa$:
\begin{itemize}
    \item Distance-based: 
    $$D_{\cS^\kappa,\cT} := \max_{ (\hat x, \hat y)\in \cT  } \min_{ (x,y)\in \cS^\kappa} \| (x-\hat x, y-\hat y) \| .$$ {This measure quantifies the distance of the farthest point in $\cT$ from $\cS^\kappa$.}
    \item Objective function-based: Let $(\alpha,\beta)\in\RR^n\times\RR^n$.% such that $ \|(\alpha,\beta)\|=1$.
    $$ O_{\cS^\kappa,\cT} (\alpha,\beta) :=  \min_{(x,y)\in \cS^\kappa}\{ \alpha^Tx+\beta^Ty\} - 
    \min_{(x,y)\in \cT}\{ \alpha^Tx+\beta^Ty\} .$$ {This measure quantifies the additive gap between optimizing over $\cT$ and $\cS^\kappa$.}
\end{itemize}
\begin{remark}\label{rem:optLinear=convHull}
    Notice that if $O_{\cS^\kappa,\cT} (\alpha,\beta) = 0$ for all  $(\alpha,\beta)$,  %such that  $ \|(\alpha,\beta)\|=1$, 
then we conclude that $\cT = \conv(\cS^\kappa) $.
\end{remark}

A simple result that we will frequently use in the rest of the paper is the following proposition.  %{\color{red}this is not necessarily the best place to put it..}
\begin{proposition}\label{prop:optimizeOverPOW}
  $  \min_{(x,y)\in \cS^\kappa_P}\{ \alpha^Tx+\beta^Ty\} 
  = \min_{x\in\cG}\{ \sum_{j:\beta_j>0} (\alpha_j x_j + \beta_j x_j^\kappa) + \sum_{j:\beta_j\le0} (\alpha_j+ \beta_j)  x_j \}.
  $
\end{proposition}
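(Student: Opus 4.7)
The plan is to observe that the set $\cS^\kappa_P$ decouples nicely between $x$ and $y$: once $x \in \cG$ is fixed, the constraints $x_j^\kappa \le y_j \le x_j$ are completely independent across the coordinates of $y$ and do not interact with each other. Since $\cG \subseteq [0,1]^n$ and $\kappa > 1$, we have $x_j^\kappa \le x_j$, so the feasible interval for $y_j$ is nonempty. Thus the minimization can be rewritten as
\begin{equation*}
\min_{x\in\cG}\left\{ \alpha^Tx + \sum_{j=1}^n \min_{y_j\in[x_j^\kappa, x_j]} \beta_j y_j \right\}.
\end{equation*}

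Next I would solve each inner one-dimensional linear minimization in closed form. For $\beta_j > 0$, the minimum of $\beta_j y_j$ over $[x_j^\kappa, x_j]$ is attained at $y_j = x_j^\kappa$, giving contribution $\beta_j x_j^\kappa$. For $\beta_j \le 0$, the minimum is attained at $y_j = x_j$, giving contribution $\beta_j x_j$. (The case $\beta_j = 0$ can be assigned either way; I group it with the $\beta_j \le 0$ case to match the statement.) Plugging these expressions back into the outer objective collapses it exactly to
\begin{equation*}
\min_{x\in\cG}\left\{ \sum_{j:\beta_j>0}(\alpha_j x_j + \beta_j x_j^\kappa) + \sum_{j:\beta_j\le 0}(\alpha_j + \beta_j)x_j \right\},
\end{equation*}
which is what we wanted to prove.

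There is no real obstacle: the proof rests on the fact that the $y$-constraints in $\cS^\kappa_P$ are separable in $j$ and involve only $x_j$ (not any other $x_i$), together with the linearity of the objective in $y$. The only minor point to check is the nonemptiness of each interval $[x_j^\kappa, x_j]$ for $x_j \in [0,1]$, which follows from $\kappa > 1$. Hence the argument is a short interchange of minimizations followed by coordinatewise optimization.
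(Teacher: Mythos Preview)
Your proof is correct and captures the same underlying idea as the paper: for each $j$, the optimal $y_j$ sits at one of the endpoints $x_j$ or $x_j^\kappa$, the choice being dictated by the sign of $\beta_j$. The paper phrases this via Lemma~\ref{lem:extrPOW}, noting that a linear objective attains its minimum on the compact convex set $\cS^\kappa_P$ at an extreme point, and that every extreme point has $\hat y_j\in\{\hat x_j,\hat x_j^\kappa\}$; you instead exploit the separability of the $y$-constraints directly and solve the inner one-dimensional problems in closed form. Your route is slightly more elementary in that it does not require the extreme-point lemma as an auxiliary result, but the two arguments are otherwise equivalent in content and length.
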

Note that the proof of Proposition~\ref{prop:optimizeOverPOW} immediately follows from the extreme point description of the set  $\extr(\cS^\kappa_P)$ (the feasible region of the \textbf{P} relaxation)
given in Lemma~\ref{lem:extrPOW}.
% \begin{proof}
% \end{proof}

%\subsection{Standard simplex as the ground set}
%\label{sec:standardSimplex}

The remainder of this section is organized as follows. In Section~\ref{sec:setContainmentExamples}, we prove Proposition~\ref{prop:relaxOrderStrict} for the case $\cG = \Delta^n$. Section~\ref{sec:exactness-kappa=2-stSimplex} presents several cases where the exact convex hull can be obtained when $\kappa = 2$. In Section~\ref{sec:approxgenkappa}, we establish results that assess the strength of $\cS_P^\kappa$ in approximating $\cS^\kappa$, with our main result showing that $\cS_P^\kappa$ provides an increasingly tight relaxation as $n \to \infty$. 
Section~\ref{sec:rpt} introduces another class of conic-representable inequalities for $\cS^\kappa$, derived using RPT. %%--%% a technique analogous to RLT. 

%Finally, Section~\ref{sec:otherground} reports observations for two other classes of ground sets.

\subsection{Set containment examples}
\label{sec:setContainmentExamples}

We now prove the strict set inclusion relations stated in Proposition~\ref{prop:relaxOrderStrict}.
We start with a lemma, which simplifies the RLT constraints when the ground set is the standard simplex.
\begin{lemma}\label{lem:RLTforStandardSimplex}
Let $\cG=\Delta^n$. Then, %$ \{x \in \Delta^n, X \in \mathbb{S}^n : \eqref{eq:rlt-all} \} =  \{x \in \Delta^n, X \in \mathbb{S}^n : \eqref{eq:rlt-stSimplex} \}$
\begin{equation*}
    \begin{split}
        %\cS_R \cap \cS_T^\kappa
    %     \cS^\kappa_{P,R}:= 
       %  \{ (x,y)\in\cS^\kappa_P : \exists X\in\mathbb{S}^n : \eqref{eq:rlt-all}, \eqref{eq:transformation}\} = 
      %  &  \left\{ (x,y)\in\cS^\kappa_P : \exists X \in \mathbb{S}^n: X \ge 0, \  Xe = x , \ \eqref{eq:transformation}\right \}  .
        %
        \{x \in \Delta^n, X \in \mathbb{S}^n : \eqref{eq:rlt-all} \} =  \{x \in \Delta^n, X \in \mathbb{S}^n : \eqref{eq:rlt-stSimplex} \},
    \end{split}
\end{equation*}
where
\begin{equation}\label{eq:rlt-stSimplex} 
    X \ge 0, \ Xe = x.
\end{equation}
Moreover, if $n \le 3$ and $\kappa=2$, we have 
\begin{equation*}
    \begin{split}
       % \cS_R \cap \cS_T^\kappa 
         \cS^\kappa_{P,R}
        & =  \left\{ (x,y)\in\cS^\kappa_P  :  \ x_j-y_j \le  \sum_{i\neq j } (x_i-y_i)  ,   \ j=1,\dots,n  \right \}  .
    \end{split}
\end{equation*}
\end{lemma}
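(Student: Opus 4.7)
The plan is to prove the two parts separately: part one by substituting the simplex data into each RLT block and simplifying, and part two by projecting out the off-diagonal entries of $X$ from the system that defines $\cS^\kappa_{P,R}$ once the $\kappa=2$ specialization fixes $X_{jj}=y_j$.

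For part one, I would take $A=e^T$, $b=1$, $C=-I$, and $d=0$, noting that the upper bounds $x_j\le 1$ are redundant under $x\ge 0$ and $e^T x=1$ and hence introduce no new RLT content. Each block then simplifies transparently. Both equations in \eqref{eq:rlt-bounds-equality} reduce to $e^T X = x^T$, i.e.\ $Xe=x$ by symmetry of $X$; \eqref{eq:rlt-equality-equality} gives $e^T X e = 1$, which is implied by $Xe=x$ and $e^T x=1$; \eqref{eq:rlt-inequality-inequality} is just $X\ge 0$; and \eqref{eq:rlt-equality-inequality} once more produces $Xe=x$. The block \eqref{eq:rlt-bounds-inequality} yields $X\ge 0$ together with $X_{ij}\le x_i$, but the latter is already implied by $X\ge 0$ and $Xe=x$ since $X_{ij}\le\sum_k X_{ik}=x_i$. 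The only inequalities requiring explicit checking are in \eqref{eq:rlt-mccormick}: the upper bound $X_{ij}\le\min\{x_i,x_j\}$ follows from the same row-sum argument, and the lower bound $X_{ij}\ge x_i+x_j-1$ follows from $X_{ij}=x_i-\sum_{k\ne j}X_{ik}\ge x_i-\sum_{k\ne j}x_k=x_i+x_j-1$, using $X_{ik}\le x_k$ and $e^T x=1$. Together with $X_{ij}\ge 0$, this closes part one.

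For part two, I would combine the simplified description $\{X\ge 0,\,Xe=x\}$ with the $\kappa=2$ reduction of \eqref{eq:transformation}, namely $X_{jj}=y_j$. A point $(x,y)\in\cS^2_P$ then lies in $\cS^2_{P,R}$ if and only if there exists a symmetric matrix $X$ with prescribed diagonal $y$, nonnegative entries, and row sums $x$. Writing $a_j:=x_j-y_j\ge 0$, the $j$-th row-sum equation becomes $\sum_{k\ne j}X_{jk}=a_j$. For $n=3$, this is a square linear system in the three off-diagonals $(X_{12},X_{13},X_{23})$ with unique solution $X_{ij}=\tfrac{1}{2}(a_i+a_j-a_k)$ for $\{i,j,k\}=\{1,2,3\}$, and the nonnegativity constraints $X_{ij}\ge 0$ translate exactly into $a_k\le a_i+a_j$, i.e.\ $x_k-y_k\le\sum_{\ell\ne k}(x_\ell-y_\ell)$. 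The cases $n=1,2$ reduce trivially, with $n=2$ giving $X_{12}=a_1=a_2$, so the two stated inequalities collapse into this equality.

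The main obstacle is mostly bookkeeping in part one (keeping track of the many RLT blocks), while part two hinges on the fact that for $n\le 3$ the linear system $Xe=x$ in the off-diagonal unknowns has a unique solution, so the projection onto $(x,y)$-space is given explicitly by the sign conditions on that solution. The restriction $n\le 3$ is essential: once $n\ge 4$, one has $\binom{n}{2}>n$ off-diagonal degrees of freedom, the projection is nontrivial, and the clean single-row description from the lemma would no longer suffice.
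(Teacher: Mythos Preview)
Your proposal is correct and follows essentially the same approach as the paper: both substitute the simplex data into each RLT block to reduce everything to $X\ge 0$, $Xe=x$, and both handle the $n\le 3$ projection by observing that the off-diagonals of $X$ are uniquely determined from the row-sum equations, so the existence condition is exactly nonnegativity of those expressions. The only cosmetic difference is that the paper takes $C=d=0$ (the box constraints being already captured by the McCormick block \eqref{eq:rlt-mccormick}), whereas you set $C=-I$; this only duplicates constraints already present and does not change the argument.
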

\begin{proof}
For $\cG=\Delta^n$,  we have $A=e^T$, $b=1$ and $C=d=0$.

In order to prove the first result, notice that equations \eqref{eq:rlt-bounds-equality} give the equation $Xe=x$. Together with the condition $x\in\Delta^n$, we observe that  
     the McCormick envelopes given in~\eqref{eq:rlt-mccormick} are redundant except $X\ge0$. Therefore, the first result~follows.

In order to prove the second result for $n=2$,   observe that we have
\begin{equation*}
\begin{split}
\cS^2_{P,R}  &= \{ (x,y)\in\cS^2_P : \exists X \in \mathbb{S}^2:  X \ge 0,  \ Xe=x, \ \diag(X) = y \} \\
&=  \{ (x,y)\in\cS^2_P : \exists X \in \mathbb{S}^2:  X \ge 0,  \ y_1 + X_{12} = x_1, X_{12} + x_2 = y_2 \} \\
&=  \{ (x,y)\in\cS^2_P : x_1-y_1 = x_2-y_2   \} .
\end{split}
\end{equation*}
     
In order to prove the second result for $n=3$, observe that we have
\begin{equation*}
\begin{split}
\cS^2_{P,R}  & = \{ (x,y)\in\cS^2_P : \exists X \in \mathbb{S}^3: \ X \ge 0,  \ Xe=x, \diag(X) = y  \} \\
&=  \{ (x,y)\in\cS^2_P :  
\exists X \in \mathbb{S}^3: \ X \ge 0, \ y_1+X_{12}+X_{13} = x_1, \\
& \hspace{5.0cm} X_{12}+y_2+X_{23} = x_2, 
X_{13}+X_{23}+y_3 = x_3   \}  \\
&=  \{ (x,y)\in\cS^2_P : 
(x_3-y_3) - (x_1-y_1) - (x_2-y_2) \le 0, \\
&\hspace{3.8cm} (x_2-y_2) - (x_1-y_1) - (x_3-y_3) \le 0, \\
&\hspace{3.8cm} (x_1-y_1) - (x_2-y_2) - (x_3-y_3) \le 0   \} ,
\end{split}
\end{equation*}
where the last equality follows from the fact that the values $X_{12}$, $X_{13}$ and $X_{23}$ are uniquely determined given $x$ and $y$.
\end{proof}

\begin{remark}
    By virtue of the first result in Lemma~\ref{lem:RLTforStandardSimplex}, we can replace the generic RLT constraints~\eqref{eq:rlt-all} with constraints~\eqref{eq:rlt-stSimplex} when $\cG=\Delta^n$ for  \textbf{PR},  \textbf{PRS} and  \textbf{PRs}  relaxations. More precisely, we  have the following: 
\begin{itemize}
    \item  $ \cS^\kappa_{P,R}:= \{ (x,y)\in\cS^\kappa_P : \exists X\in\mathbb{S}^n : \eqref{eq:rlt-stSimplex}, \eqref{eq:transformation}\} $
    \item  $ \cS^\kappa_{P,R,S}:=   \{ (x,y)\in\cS^\kappa_P : \exists X\in\mathbb{S}^n : \eqref{eq:rlt-stSimplex},\eqref{eq:sdp}, \eqref{eq:transformation}\} $ 
    \item  $ \cS^\kappa_{P,R,s}:=\{ (x,y)\in\cS^\kappa_P : \exists X\in\mathbb{S}^n : \eqref{eq:rlt-stSimplex},\eqref{eq:socp}, \eqref{eq:transformation}\} $
\end{itemize}
\end{remark}

The following result shows that the \textbf{PR} relaxation can be strictly stronger than the \textbf{P} relaxation.
\begin{proposition}\label{prop:POW=not=POW+RLT}
The relation
    $\cS^\kappa_P \supseteq \cS^\kappa_{P,R}$ can be strict.
\end{proposition}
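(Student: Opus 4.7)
The plan is to exhibit a specific counterexample using the setting $\cG = \Delta^n$, $\kappa = 2$, for which Lemma~\ref{lem:RLTforStandardSimplex} furnishes an explicit closed-form description of $\cS^2_{P,R}$. This turns the strict-containment question into a concrete search for a point satisfying the $\mathbf{P}$ constraints but violating at least one of the RLT-derived inequalities.

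The first step is to fix $n = 2$ (the smallest nontrivial case), where Lemma~\ref{lem:RLTforStandardSimplex} states that
\begin{equation*}
\cS^2_{P,R} = \{(x,y) \in \cS^2_P : x_1 - y_1 = x_2 - y_2\}.
\end{equation*}
Since $\cS^2_P$ merely requires $x_j^2 \le y_j \le x_j$ for each $j$ (together with $x_1 + x_2 = 1$, $x \ge 0$), the only way a point in $\cS^2_P$ can fail to lie in $\cS^2_{P,R}$ is by having $x_1 - y_1 \ne x_2 - y_2$. So the task reduces to producing one such point.

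A convenient choice is $\hat x = (\tfrac{1}{2}, \tfrac{1}{2})$, together with $\hat y_1 = \tfrac{1}{2}$ (the upper bound $\hat x_1$) and $\hat y_2 = \tfrac{1}{4}$ (the lower bound $\hat x_2^2$). Then $\hat x_1^2 = \tfrac{1}{4} \le \hat y_1 \le \hat x_1 = \tfrac{1}{2}$ and $\hat x_2^2 = \tfrac{1}{4} \le \hat y_2 \le \hat x_2 = \tfrac{1}{2}$, so $(\hat x, \hat y) \in \cS^2_P$. However, $\hat x_1 - \hat y_1 = 0 \ne \tfrac{1}{4} = \hat x_2 - \hat y_2$, so $(\hat x, \hat y) \notin \cS^2_{P,R}$ by Lemma~\ref{lem:RLTforStandardSimplex}. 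This establishes strict containment.

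There is essentially no obstacle: once Lemma~\ref{lem:RLTforStandardSimplex} is invoked, the argument is just the verification of two inequalities and one equation. The only minor judgment call is the choice of dimension and $\kappa$; $n = 2$ and $\kappa = 2$ are the cleanest, and the same example extends to any $n \ge 2$ by appending extra coordinates on the axis (e.g., padding with additional indices where $\hat x_j = 0$ and $\hat y_j = 0$, after rescaling to keep $\sum \hat x_j = 1$), although this extension is not needed for the statement of Proposition~\ref{prop:POW=not=POW+RLT} as written.
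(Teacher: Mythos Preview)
Your proof is correct and follows essentially the same approach as the paper: both fix $\cG=\Delta^2$, $\kappa=2$, invoke Lemma~\ref{lem:RLTforStandardSimplex}, and use the identical counterexample $\hat x=(\tfrac12,\tfrac12)$, $\hat y=(\tfrac12,\tfrac14)$. Your write-up simply includes a bit more of the verification detail.
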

\begin{proof}
    Let $\kappa=2$ and consider $\cG=\Delta^2$. 
    %In this case, we have  $\cS_R = \{ (x,y)\in\Delta^2\times\RR^2 : \exists X \in \SS^2:   \ Xe=x \} =  \{ (x,y)\in\Delta^2\times\RR^2 : x_1-y_1 = x_2-y_2 \} $. 
 %   Consider the point $$(\hat x, \hat y)=(1/2,1/2,1/2,1/4).$$
    Consider the point $(\hat x, \hat y)$, where 
    $$ \hat x=\begin{bmatrix}1/2 \\ 1/2 \end{bmatrix} \text{ and } \hat y=\begin{bmatrix}1/2 \\1/4\end{bmatrix}.$$  
    Due to Lemma~\ref{lem:RLTforStandardSimplex}, we deduce that $(\hat x, \hat y)\in \cS^\kappa_P$ but $(\hat x, \hat y)\not\in \cS^\kappa_{P,R}$. 
\end{proof}

The following result shows that the \textbf{PRs} relaxation can be strictly stronger than the \textbf{PR} relaxation.
\begin{proposition}\label{prop:POW+RLT=not=POW+RLT+SOC}
The relation
    $\cS^\kappa_{P,R} \supseteq \cS^\kappa_{P,R,s}$ can be strict.
\end{proposition}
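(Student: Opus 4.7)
The plan is to exhibit a concrete separating point with $n=3$, $\kappa=2$, and $\cG=\Delta^3$. The structural reason this should be possible is already visible in the proof of Lemma~\ref{lem:RLTforStandardSimplex}: for $n=3$ and $\kappa=2$ the conditions $X_{jj}=y_j$ and $Xe=x$ form a linear system in $(X_{12},X_{13},X_{23})$ whose coefficient matrix is invertible, so the off-diagonal entries of $X$ are \emph{uniquely determined} by $(x,y)$. Thus to prove the strict inclusion it suffices to display a single feasible $(\hat x,\hat y)\in\cS^2_{P,R}$ for which this forced $X$ violates one of the $2\times 2$ minor inequalities $X_{ij}^2\le X_{ii}X_{jj}$ from~\eqref{eq:socp}.

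The idea for constructing such a point is to force the mass of $X$ to concentrate on a single off-diagonal entry, so that this entry is large while the corresponding diagonal entries $y_i,y_j$ remain small. Concretely, I propose
\[
\hat x=\bigl(\tfrac12,\tfrac14,\tfrac14\bigr)^T,\qquad \hat y=\bigl(\tfrac12,\tfrac{1}{16},\tfrac{1}{16}\bigr)^T.
\]
Then $\hat x_1-\hat y_1=0$, which together with $X\ge 0$ forces $X_{12}=X_{13}=0$ in the RLT system, and all remaining ``slack'' collapses onto $X_{23}$, giving $X_{23}=\tfrac{3}{16}$.

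The verification then proceeds in three short steps. First, I check $(\hat x,\hat y)\in\cS^2_P$: one has $\hat x_j^2\le\hat y_j\le\hat x_j$ componentwise (the two non-trivial coordinates are on the lower boundary $y_j=x_j^2=\tfrac{1}{16}$). Second, I verify $(\hat x,\hat y)\in\cS^2_{P,R}$ via the characterization in Lemma~\ref{lem:RLTforStandardSimplex}: the triangle inequalities $\hat x_i-\hat y_i\le\sum_{j\ne i}(\hat x_j-\hat y_j)$ all reduce to $0\le\tfrac38$ or $\tfrac{3}{16}\le\tfrac{3}{16}$, which hold. Third, using the uniqueness of $X$ noted above, the SOCP minor inequality for the pair $(2,3)$ would require $X_{23}^2\le\hat y_2\hat y_3=\tfrac{1}{256}$, but $X_{23}^2=\tfrac{9}{256}$; hence $(\hat x,\hat y)\notin\cS^2_{P,R,s}$.

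The main (very mild) obstacle is justifying that \emph{no} lifting matrix $X$ can repair the violation, i.e.\ that it is enough to check one candidate $X$. This is precisely the uniqueness observation carried out for $n=3$ inside the proof of Lemma~\ref{lem:RLTforStandardSimplex}, so I will simply invoke that lemma rather than redo the linear algebra. Everything else is routine arithmetic on fractions.
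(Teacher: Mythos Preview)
Your proof is correct and follows essentially the same approach as the paper: both work with $n=3$, $\kappa=2$, $\cG=\Delta^3$, invoke Lemma~\ref{lem:RLTforStandardSimplex} for the triangle-inequality characterization and the uniqueness of the lifted matrix $X$, and then exhibit a $2\times2$ minor violation. The only difference is the particular separating point---the paper uses $\hat x=(1/3,1/3,1/3)^T$, $\hat y=(1/9,1/9,1/3)^T$ (violating $X_{12}^2\le X_{11}X_{22}$), whereas you use $\hat x=(1/2,1/4,1/4)^T$, $\hat y=(1/2,1/16,1/16)^T$ (violating $X_{23}^2\le X_{22}X_{33}$).
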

\begin{proof}
    Let $\kappa=2$  and consider $\cG=\Delta^3$.     
    Consider the point $(\hat x, \hat y)$, where 
    $$ \hat x=\begin{bmatrix}1/3 \\ 1/3 \\ 1/3 \end{bmatrix} \text{ and }
    \hat y=\begin{bmatrix}1/9 \\ 1/9 \\1/3\end{bmatrix}.$$   
    Due to Lemma~\ref{lem:RLTforStandardSimplex}, we deduce that this point belongs to $\cS^\kappa_{P,R}$ with the following \textit{unique} selection of~$\hat X$:
    \[
    \hat X =\begin{bmatrix}
        1/9 & 2/9 & 0 \\
        2/9 & 1/9 & 0 \\
        0 & 0 & 1/3 \\
    \end{bmatrix}.
    \]
    However,  $(\hat x, \hat y)\not\in \cS^\kappa_{P,R,s}$ since the inequality $X_{11}X_{22} \ge X_{12}^2$ is violated.
\end{proof}

The following result shows that the \textbf{PRS} relaxation can be strictly 
stronger than the \textbf{PRs} relaxation.
\begin{proposition}\label{prop:POW+RLT+SOC=not=POW+RLT+SDP}
The relation
    $\cS^\kappa_{P,R,s} \supseteq \cS^\kappa_{P,R,S}$ can be strict.
\end{proposition}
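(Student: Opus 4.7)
The plan is to exhibit an explicit counterexample in the setting $\kappa=2$, $\cG=\Delta^3$, in the spirit of Propositions~\ref{prop:POW=not=POW+RLT} and~\ref{prop:POW+RLT=not=POW+RLT+SOC}. The key structural observation is that, by (the proof of) Lemma~\ref{lem:RLTforStandardSimplex}, for $n=3$ the lifted matrix $\hat X$ associated with any $(\hat x,\hat y)\in\cS^2_{P,R}$ is uniquely determined. Hence it suffices to produce a single $(\hat x,\hat y)$ whose (necessarily unique) $\hat X$ satisfies the SOC conditions~\eqref{eq:socp} yet violates the semidefinite condition~\eqref{eq:sdp}.

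To find such a point, I would search among tridiagonal patterns
\[
\hat X = \begin{pmatrix} t & tp & 0 \\ tp & tb & tq \\ 0 & tq & t \end{pmatrix},
\]
since then $\det\hat X = t^3(b-p^2-q^2)$, so non-PSDness is simply $b<p^2+q^2$, while SOC reduces to $p^2,q^2\le b$. Imposing $\hat X e=\hat x\in\Delta^3$ fixes $t=1/(2+2p+2q+b)$, and the \textbf{P} requirement $\hat y_2\ge\hat x_2^2$ simplifies to $2b\ge(p+q)^2$ (the analogous inequalities at $j=1,3$ are much weaker). The parameters $p=2/3$, $q=1/3$, $b=1/2$ meet all these constraints and yield the candidate
\[
\hat x = \bigl(\tfrac{10}{27},\tfrac{1}{3},\tfrac{8}{27}\bigr), \quad \hat y = \bigl(\tfrac{2}{9},\tfrac{1}{9},\tfrac{2}{9}\bigr), \quad \hat X = \tfrac{1}{27}\begin{pmatrix} 6 & 4 & 0 \\ 4 & 3 & 2 \\ 0 & 2 & 6 \end{pmatrix}.
\]

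The remaining verification is routine arithmetic: I would directly check $\hat x\in\Delta^3$, $\hat x_j^2\le\hat y_j\le\hat x_j$ for all $j$, $\hat X\ge 0$, $\hat X e=\hat x$, $\diag(\hat X)=\hat y$, and $\hat X_{ij}^2\le\hat X_{ii}\hat X_{jj}$ for $i<j$, which collectively give $(\hat x,\hat y)\in\cS^2_{P,R,s}$. For the SDP failure, a $3\times 3$ determinant shows $\det\hat X<0$, so $\hat X\not\succeq 0$; picking any $v$ with $v^T\hat X v<0$ gives $v^T(\hat X-\hat x\hat x^T)v\le v^T\hat X v<0$, whence by the Schur complement the augmented matrix in~\eqref{eq:sdp} fails to be positive semidefinite. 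Uniqueness of $\hat X$ from Lemma~\ref{lem:RLTforStandardSimplex} then rules out any alternative lifted matrix, yielding $(\hat x,\hat y)\notin\cS^2_{P,R,S}$.

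The main obstacle is isolating the counterexample itself: the combination $p^2,q^2\le b$, $2b\ge(p+q)^2$, and $b<p^2+q^2$ forces $b$ into the narrow window $[(p+q)^2/2,\,p^2+q^2)$, which is nonempty only when $p\ne q$. Once one commits to an asymmetric ansatz, some experimentation produces clean rational parameters like the ones above; without this observation, searching over symmetric patterns wastes effort on regions where the SOC and \textbf{P} constraints together already imply the PSD condition.
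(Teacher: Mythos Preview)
Your proposal is correct and follows essentially the same approach as the paper: both set $\kappa=2$, $\cG=\Delta^3$, exploit the uniqueness of $\hat X$ from Lemma~\ref{lem:RLTforStandardSimplex}, verify the $2\times 2$ minor conditions hold, and show $\det\hat X<0$. The only difference is the specific numerical point---the paper uses $\hat x=(1/2,1/3,1/6)$, $\hat y=(1/4,1/8,1/30)$ with a full (no zero entries) $\hat X$, whereas your tridiagonal ansatz and the accompanying analysis give a more transparent route to finding such a point; your added remark that $\hat X\not\succeq 0$ forces the augmented matrix in~\eqref{eq:sdp} to fail PSD (via $v^T(\hat X-\hat x\hat x^T)v\le v^T\hat Xv$) is a small clarification the paper leaves implicit.
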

\begin{proof}
    Let $\kappa=2$  and consider $\cG=\Delta^3$. 
 %   Consider the point $$(\hat x, \hat y)=(1/2,1/3,1/6,1/4,1/8,1/30).$$   
    Consider the point $(\hat x, \hat y)$, where 
    $$ \hat x=\begin{bmatrix}1/2 \\ 1/3 \\ 1/6 \end{bmatrix} \text{ and }
    \hat y=\begin{bmatrix}1/4 \\ 1/8 \\1/30\end{bmatrix}.$$  
    Due to Lemma~\ref{lem:RLTforStandardSimplex}, we deduce that this point belongs to $\cS^\kappa_{P,R}$ with the following \textit{unique} selection of~$\hat X$:
    \[
  \hat X = \frac1{240} \begin{bmatrix}
        60 & 39 & 21 \\
        39 & 30 & 11 \\
        21 & 11 & 8 \\
    \end{bmatrix}.
    \]
    In addition, its principal $2\times2$ minors, which are $\frac{279}{240^2}$, $\frac{39}{240^2}$, $\frac{119}{240^2}$, are all nonnegative, hence, $(\hat x, \hat y)\in \cS^\kappa_{P,R,s}$. However, its determinant is $-\frac{1}{240^2}<0$, hence, $\hat X$ is not positive semidefinite. Therefore, $(\hat x, \hat y)\not\in \cS^\kappa_{P,R,S}$. 
\end{proof}

\subsection{Exactness Results for $\kappa=2$}
\label{sec:exactness-kappa=2-stSimplex}

We now provide some exactness results for $\kappa=2$. Theorem~\ref{thm:exactness-kappa=2-stSimplexn=2} states that the convex hull of $\cS^\kappa$ is second-order cone representable for $n=2$, which is obtained from the \textbf{PR} relaxation.
\begin{theorem}\label{thm:exactness-kappa=2-stSimplexn=2}
    Let $\cG=\Delta^2$ and $\kappa = 2$. Then, $\conv(\cS^\kappa) = \cS^\kappa_{P,R} $.
\end{theorem}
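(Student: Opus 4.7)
The plan is to prove the nontrivial inclusion $\cS^2_{P,R} \subseteq \conv(\cS^2)$, since the reverse follows directly from the fact that $\cS^2_{P,R}$ is a convex relaxation containing $\cS^2$. The main idea is that the single RLT equation collapses $\cS^2_{P,R}$ onto a common two-dimensional affine subspace with $\cS^2$, reducing the problem to a planar convex-hull calculation for the parabola.

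First I would invoke Lemma~\ref{lem:RLTforStandardSimplex} with $n=2$ to rewrite $\cS^2_{P,R}=\{(x,y)\in\cS^2_P : x_1-y_1=x_2-y_2\}$. Combined with $x_1+x_2=1$, this forces $x_2=1-x_1$ and $y_2=y_1+1-2x_1$ throughout $\cS^2_{P,R}$. A short computation shows that $\cS^2$ itself lies on the same affine subspace, since $y_1-y_2=x_1^2-(1-x_1)^2=x_1-x_2$. Parameterizing the subspace by $(x_1,y_1)$, the image of $\cS^2$ is the parabolic arc $\{(t,t^2):t\in[0,1]\}$, and the remaining defining constraints of $\cS^2_{P,R}$ collapse (after the automatic substitutions for $x_2$ and $y_2$) to the planar region $R=\{(x_1,y_1): 0\le x_1\le 1,\ x_1^2\le y_1\le x_1\}$.

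The crux is then to show that $R$ is the convex hull of this arc. For any $(\hat x,\hat y)\in R$ with $\hat y>0$, I would set $s=\hat y/\hat x\in(0,1]$ and $\lambda=1-\hat x^2/\hat y\in[0,1]$, yielding the explicit decomposition $(\hat x,\hat y)=\lambda(0,0)+(1-\lambda)(s,s^2)$ as a convex combination of two points on the arc; the degenerate case $\hat y=0$ forces $\hat x=0$, which is already on the arc. Lifting through the affine parameterization, this exhibits every point of $\cS^2_{P,R}$ as a convex combination of $(0,1,0,1)$ and $(s,1-s,s^2,(1-s)^2)$, both of which are in $\cS^2$. I do not anticipate a substantial obstacle; the only routine verification is that the recovered fourth coordinate matches, which follows automatically from the affine identity $y_2=y_1+1-2x_1$ shared by both sides.
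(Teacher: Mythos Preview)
Your proposal is correct and follows essentially the same approach as the paper: both use Lemma~\ref{lem:RLTforStandardSimplex} to reduce to the affine constraint $x_1-y_1=x_2-y_2$, and both exhibit an arbitrary point as the convex combination $(1-\lambda)(0,1,0,1)+\lambda(\tilde x,\tilde y)$ with the identical choice $\lambda=\hat x_1^2/\hat y_1$ and $\tilde x_1=\hat y_1/\hat x_1$. Your explicit reduction to the planar convex hull of the parabolic arc is a slightly cleaner framing of what the paper does directly in four coordinates, but the underlying computation is the same.
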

\begin{proof}
It suffices to show that $\conv(\cS^\kappa) \supseteq \cS^\kappa_{P,R} $. Let $(\hat x, \hat y)\in \cS^\kappa_{P,R}$. Then, we have $e^T \hat x=1$, $ \hat x_j\ge \hat y_j\ge \hat x_j^2 $ and $\hat x_1-\hat y_1 = \hat x_2-\hat y_2 $, where the last equality follows from Lemma~\ref{lem:RLTforStandardSimplex}. 

\noindent
    {Case 1}: $\hat y_1 = 0$. In this case,   we conclude that 
    \[
    \hat y_1 = 0 \implies 
    \hat x_1 = 0 \implies 
    \hat x_2 = 1 \implies 
    \hat y_2 = 1.
    \]
    Notice that $(\hat x, \hat y) \in \mathcal{S}^\kappa$, hence, $(\hat x, \hat y) \in \conv(\mathcal{S^\kappa})$.

\noindent
    {Case 2}: $\hat y_1 > 0$. In this case, we also have $\hat x_1 > 0$. We claim that
    \[
    \begin{bmatrix}
       \hat x_1 \\ \hat x_2 \\ \hat y_1 \\ \hat y_2  
    \end{bmatrix}
        = (1-\lambda)
    \begin{bmatrix}
        0 \\ 1 \\ 0 \\ 1
    \end{bmatrix}
    + \lambda 
    \begin{bmatrix}
       \tilde x_1 \\ \tilde x_2 \\ \tilde y_1 \\ \tilde y_2  
    \end{bmatrix},
    \]
    for some $(\tilde x, \tilde y) \in \mathcal{S}^\kappa$ with $\lambda = \frac{\hat x_1^2}{\hat y_1} \in (0,1)$. In fact, 
    we have
    $\tilde y_1 = \frac{\hat y_1^2}{\hat x_1^2} $,
    $\tilde y_2 = \frac{(\hat y_1-\hat x_1)^2}{\hat x_1^2}$, 
    $ \tilde x_1 = \frac{\hat y_1}{\hat x_1} $ and
    $\tilde x_2 = \frac{\hat x_1-\hat y_1}{\hat x_1}$. Then, it is straightforward to check that $(\tilde x, \tilde y) \in \mathcal{S}^\kappa$, hence, we conclude that  $(\tilde x, \tilde y) \in \conv(\mathcal{S}^\kappa)$.
 \end{proof}

For $n>2$, the convex hull of $\cS^\kappa$ is harder to characterize. The following fact provides the convex hull using the intractable completely positive cone due to~\cite{burer2009copositive}.
\begin{theorem} \label{thm:burerCONV}
Let $\cG=\Delta^n$ and $\kappa=2$. Then, we have
$$\conv(\cS^\kappa) = \left\{ (x,y) \in\Delta^n\times\rr^n: \ \exists X \in \mathbb{S}^n : \begin{bmatrix}
     X &  x \\  x^T & 1
\end{bmatrix}\in \CP^{n+1}, \ \eqref{eq:rlt-stSimplex} \right\}. $$
\end{theorem}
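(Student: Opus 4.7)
The plan is to invoke the completely positive programming technique of \cite{burer2009copositive}, adapted to a convex-hull statement. For $\kappa = 2$ the set $\cS^\kappa$ can be rewritten as $\{(x, \diag(xx^T)) : x \in \Delta^n\}$, so implicit in the statement is the identification $y_j = X_{jj}$ (the case $\kappa = 2$ of~\eqref{eq:transformation}), which I will carry throughout both directions.

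For the inclusion $\conv(\cS^\kappa) \subseteq \text{RHS}$, given $(x,y) \in \cS^\kappa$ I take $X = xx^T$. Then $Xe = x(e^T x) = x$, $X \ge 0$, $\diag(X) = y$, and
$$M := \begin{bmatrix} X & x \\ x^T & 1 \end{bmatrix} = \begin{bmatrix} x \\ 1 \end{bmatrix}\begin{bmatrix} x^T & 1 \end{bmatrix} \in \CP^{n+1}$$
as a nonnegative rank-one matrix. Convexity of the right-hand side (inherited from the convex cone $\CP^{n+1}$ and the remaining linear conditions) extends this to all of $\conv(\cS^\kappa)$.

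For the reverse inclusion, fix any $(x,y,X)$ feasible for the RHS and write $M = \sum_{k=1}^{K} v_k v_k^T$ with $v_k = \begin{bmatrix} u_k \\ t_k \end{bmatrix} \in \rr_{+}^{n+1}$. Extracting blocks yields $X = \sum_k u_k u_k^T$, $x = \sum_k t_k u_k$, and $\sum_k t_k^2 = 1$. The crux of the argument, and the step I expect to be the main obstacle, is to prove that $e^T u_k = t_k$ for every $k$. Once this holds, I set $\tilde x^{(k)} := u_k/t_k \in \Delta^n$ and $\lambda_k := t_k^2$ for each $k$ with $t_k > 0$ (indices with $t_k = 0$ automatically force $u_k = 0$ since $e^T u_k = t_k$ and $u_k \ge 0$). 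Verifying $\sum_k \lambda_k = 1$, $\sum_k \lambda_k \tilde x^{(k)} = x$, and $\sum_k \lambda_k (\tilde x^{(k)}_j)^2 = X_{jj} = y_j$ then exhibits $(x,y)$ as a convex combination of points of $\cS^\kappa$.

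To establish $e^T u_k = t_k$ I combine three scalar identities: $\sum_k t_k^2 = M_{n+1,n+1} = 1$; $\sum_k (e^T u_k)^2 = e^T X e = e^T x = 1$ (using $Xe = x$ from~\eqref{eq:rlt-stSimplex} and $e^T x = 1$); and $\sum_k t_k(e^T u_k) = e^T x = 1$. Expanding gives $\sum_k (t_k - e^T u_k)^2 = 1 - 2 + 1 = 0$, which forces $e^T u_k = t_k$ termwise. This Cauchy--Schwarz-flavored identity, enabled precisely by the RLT row-sum constraint $Xe = x$, is the entire technical content; the remaining bookkeeping is routine.
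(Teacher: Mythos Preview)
Your proof is correct. The paper does not actually prove this theorem; it states it as a known fact due to Burer~\cite{burer2009copositive} and moves on. Your argument is precisely the standard Burer-type derivation specialized to the simplex constraint $e^Tx=1$: the forward inclusion is immediate from the rank-one lift, and the reverse inclusion hinges on the sum-of-squares identity $\sum_k (t_k - e^Tu_k)^2 = 0$, which is exactly how Burer exploits a linear equality constraint $Ax=b$ (here $A=e^T$, $b=1$) together with the RLT consequence $AXA^T = b^2$ to force each term of a CP decomposition to be individually feasible. So you have faithfully reconstructed the argument the paper is citing; there is nothing to add or correct.
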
 
In our paper, we look for tractable relaxations, for example, the \textbf{PRS} relaxation, which is a doubly nonnegative relaxation. Since we analyze the case where $\cG=\Delta^n$ and $\kappa=2$ in this part, observe that optimizing a linear function $\alpha^Tx+\beta^Ty$ over the \textbf{PRS} relaxation is equivalent to the following optimization problem, which we call as the \textbf{PRS'} relaxation:
\begin{equation*}
    \min_{x\in\Delta^n, X\in \mathbb{S}^n}\left\{ \alpha^Tx + \beta^T \diag(X) : \ \begin{bmatrix}
     X &  x \\  x^T & 1
\end{bmatrix}\in \mathbb{S}_+^{n+1}, \ \eqref{eq:rlt-stSimplex} \right\}.
\end{equation*}

The following corollary is a consequence of Theorem~\ref{thm:burerCONV}.
%We start our analysis with a lemma.
\begin{corollary}\label{cor:CPP}
The following hold true: 
\begin{enumerate}[(i)]
\item 
Let  $\cG=\Delta^3$ and $\kappa = 2$. Then, $\conv(\cS^\kappa) = \cS^\kappa_{P,R,S} $.  
    \item 
    Consider an optimal solution $(\hat x,   \hat X)$ to the \textbf{PRS'} relaxation and assume that the matrix $\begin{bmatrix}
   \hat  X & \hat x \\ \hat x^T & 1
\end{bmatrix} \in \CP^{n+1} $. Then,  $\OO_{\cS^\kappa,\cS^\kappa_{P,R,S}} (\alpha,\beta)=0$. 
\end{enumerate}
\end{corollary}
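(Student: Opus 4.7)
The plan is to derive both parts as direct consequences of Theorem~\ref{thm:burerCONV}, which already characterizes $\conv(\cS^\kappa)$ via a completely positive lift. The bridge from the intractable $\CP^{n+1}$ description in that theorem to the tractable doubly nonnegative description of $\cS^\kappa_{P,R,S}$ is provided by the classical identity $\CP^m = \{M \in \mathbb{S}_+^m : M \ge 0\}$ valid for $m \le 4$ (Diananda; Maxfield--Minc). This is the only nontrivial ingredient the proof requires; everything else is short bookkeeping with identities already established earlier in the paper.

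For part (i), since $n = 3$ the lifted matrix $\begin{bmatrix} X & x \\ x^T & 1 \end{bmatrix}$ has order $4$, so its membership in $\CP^{n+1}$ reduces to the PSD-plus-nonnegativity conditions of~\eqref{eq:sdp}. The remaining task is to verify that, under~\eqref{eq:rlt-stSimplex} and $y_j = X_{jj}$ from~\eqref{eq:transformation}, the single-row power cone inequalities $x_j^2 \le y_j \le x_j$ in the definition of $\cS^\kappa_P$ are redundant: the lower bound $X_{jj} \ge x_j^2$ is the $2\times 2$ PSD minor $\begin{bmatrix} X_{jj} & x_j \\ x_j & 1 \end{bmatrix} \succeq 0$, while the upper bound $X_{jj} \le x_j$ follows from $(Xe)_j = x_j$ combined with $X \ge 0$. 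Hence the defining system of $\cS^\kappa_{P,R,S}$ coincides exactly with the CPP lift of Theorem~\ref{thm:burerCONV} for $n=3$, and the equality $\conv(\cS^\kappa) = \cS^\kappa_{P,R,S}$ follows.

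For part (ii), I would argue by sandwiching. The hypothesis says that the \textbf{PRS'}-optimal pair $(\hat x, \hat X)$ also satisfies the CPP lift conditions of Theorem~\ref{thm:burerCONV}, so under the identification $y = \diag(X)$ (valid by~\eqref{eq:transformation} for $\kappa = 2$) we obtain $(\hat x, \diag(\hat X)) \in \conv(\cS^\kappa)$. Since linear optimization over $\conv(\cS^\kappa)$ yields the same value as linear optimization over $\cS^\kappa$, and since $\cS^\kappa_{P,R,S}$ is a relaxation of $\cS^\kappa$, the \textbf{PRS'} optimal value is simultaneously a lower bound on $\min_{\cS^\kappa}\{\alpha^T x + \beta^T y\}$ (by the relaxation property) and an upper bound on it (since that value is attained at a convex-hull-feasible point). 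Equality follows, giving $\OO_{\cS^\kappa,\cS^\kappa_{P,R,S}}(\alpha,\beta) = 0$.

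The main conceptual hurdle is recognizing that the dimension constraint $n+1 \le 4$ in part (i) is exactly what makes the DNN-to-CPP identification applicable; beyond that, the arguments are essentially bookkeeping. The approach in part (ii) is a standard primal-feasibility/relaxation sandwich that does not require the $n \le 3$ restriction.
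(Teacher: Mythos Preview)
Your proposal is correct and takes essentially the same approach as the paper: part (i) hinges on the Maxfield--Minc fact that $\CP^m$ coincides with the doubly nonnegative cone for $m\le 4$, and part (ii) is the natural sandwich argument from Theorem~\ref{thm:burerCONV}. The paper's own proof is a one-liner citing Maxfield--Minc for (i) and leaves (ii) implicit as a direct consequence of Theorem~\ref{thm:burerCONV}; your additional bookkeeping (verifying that the single-row inequalities $x_j^2\le y_j\le x_j$ are implied by the PSD and RLT constraints) is a useful detail the paper omits.
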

{Note that part (i) of Corollary~\ref{cor:CPP} follows from the fact that up to $4\times 4$ matrices, doubly non-negative matrices are also completely positive~\cite{maxfield1962matrix}. By the same token, we have $\cG=\Delta^2$ and $\kappa = 2$, then $\conv(\cS^\kappa) = \cS^\kappa_{P,R,S} $. 
Theorem~\ref{thm:exactness-kappa=2-stSimplexn=2} is a strengthening, since it says that we do not need the PSD constraints to obtain the convex hull when $n =2$.}

%-%-%-
%%%    \item The \textbf{PRS} relaxation: $ \cS^\kappa_{P,R,S}:=   \{ (x,y)\in\cS^\kappa_P : \exists X\in\mathbb{S}^n : \eqref{eq:rlt-all},\eqref{eq:sdp}, \eqref{eq:transformation}\} $ 
%%
%-%-%-
 
% \begin{theorem}\label{thm:exactness-kappa=2-stSimplexn=3}
    
% \end{theorem}
% \begin{proof}
%     {\color{red}DNN relaxation is tight for $n\le3$ (cite Burer \cite{burer2009copositive}) since CPP=DNN up to dimension 4. - to be made more precise}
% \end{proof}

Next, in Theorem~\ref{thm:exactness-kappa=2-specialCases}, we present three sufficient conditions on $\beta$ so that we obtain $\OO_{\cS^\kappa,\cS^\kappa_{P,R,S}} (\alpha,\beta) = 0$. 
% Before stating this result formally, we introduce some further notation.
% Let us denote the principal submatrix of 
% $\begin{bmatrix}
%    \hat  X & \hat x \\ \hat x^T & 1
% \end{bmatrix} \in \mathbb{S}^{n+1}$
% obtained from the rows and columns corresponding to an index set $J\subseteq\{1,\dots,n+1\}$ as $M_J$. For $M_J \succ 0$, we will denote the Schur complement of $\begin{bmatrix}
%    \hat  X & \hat x \\ \hat x^T & 1
% \end{bmatrix}$ with respect to  $M_J$ as $N_J$. We also define a matrix $\bar N_J\in\mathbb{S}^{n+1}$, whose entries in $J\times J$ are exactly the same as in the entries of $S_J$, and the entries not in $J\times J$ are zero.  For example, we have $M_{\{n+1\}}=1$, $N_{\{n+1\}}=\hat X-\hat x\hat x^T$ and $\bar N_{\{n+1\}}= \begin{bmatrix}
%     \hat X-\hat x\hat x^T & 0 \\ 0^T & 0
% \end{bmatrix}$.
\begin{theorem}\label{thm:exactness-kappa=2-specialCases}
    Let $\cG=\Delta^n$ and $\kappa = 2$. %Then,  $\OO_{\cS^\kappa,\cS^\kappa_{P,R,S}} (\alpha,\beta)=0$ if any of the following hold true:
    Then, we have $O_{\cS^\kappa,\cS^\kappa_{P,R,S}} (\alpha,\beta) =0$ under any of the following cases:
    \begin{itemize}
   %     \item $n\le3$.
        \item Case 1: $|\{j=1,\dots,n : \beta_j \le 0\}| = n$.    
        \item Case 2: $|\{j=1,\dots,n : \beta_j \ge 0\}| = n$.
        \item Case 3: $|\{j=1,\dots,n : \beta_j > 0\}| = 1$.\end{itemize}
\end{theorem}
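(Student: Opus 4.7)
The plan is to treat the three cases separately: Cases~1 and 2 will follow quickly from Proposition~\ref{prop:optimizeOverPOW}, while Case~3 requires a more detailed analysis that combines the structural properties of the minimizer of $\alpha^Tx+\beta^Ty$ over $\cS^\kappa$ with the SDP-plus-RLT constraints defining $\cS^\kappa_{P,R,S}$.

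For Case~1 ($\beta\le 0$), Proposition~\ref{prop:optimizeOverPOW} yields $\min_{(x,y)\in\cS^\kappa_P}\{\alpha^Tx+\beta^Ty\}=\min_{x\in\Delta^n}(\alpha+\beta)^Tx$, a linear minimization attained at some vertex $e^{j^*}$ of $\Delta^n$ with value $\alpha_{j^*}+\beta_{j^*}$; the point $(e^{j^*},e^{j^*})$ lies in $\cS^\kappa$ and attains the same value, so the inclusions $\cS^\kappa\subseteq\cS^\kappa_{P,R,S}\subseteq\cS^\kappa_P$ force all three minima to coincide. In Case~2 ($\beta\ge 0$), Proposition~\ref{prop:optimizeOverPOW} specializes to $\min_{(x,y)\in\cS^\kappa_P}\{\alpha^Tx+\beta^Ty\}=\min_{x\in\Delta^n}\sum_j(\alpha_jx_j+\beta_jx_j^2)$, which equals $\min_{(x,y)\in\cS^\kappa}\{\alpha^Tx+\beta^Ty\}$ because $y_j=x_j^2$ is feasible in $\cS^\kappa_P$ and optimal when $\beta_j\ge 0$; the same sandwich closes Case~2.

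For Case~3, assume WLOG $\beta_1>0$ and $\beta_j\le 0$ for $j\ge 2$, and set $g(x):=\alpha^Tx+\sum_j\beta_jx_j^2$. First I would characterize $\min_{\Delta^n}g$: for fixed $x_1$ the tail $\sum_{j\ge 2}(\alpha_jx_j+\beta_jx_j^2)$ is concave in $x_{2:n}$ on the scaled simplex $(1-x_1)\Delta^{n-1}$, so the minimum of $g$ over $\Delta^n$ is attained at a $2$-sparse point $x^*$ supported in $\{1,k^*\}$ for some $k^*\in\{2,\dots,n\}$. Next I would normalize any $(x,X)\in\cS^\kappa_{P,R,S}$ by replacing $X$ with $X'$ defined by $X'_{ij}=0$ for $i,j\ge 2$, $i\ne j$ and $X'_{jj}=X_{jj}+\sum_{k\ne j,\,k\ge 2}X_{jk}$, with the first row and column of $X$ unchanged; because $X'-X$ is the Laplacian of the nonnegative-weighted graph on $\{2,\dots,n\}$ with weights $X_{jk}$, it is PSD, so $X'\succeq xx^T$, $X'\ge 0$ and $X'e=x$ are preserved, while the objective changes by $\sum_{j\ge 2}\beta_j\sum_{k\ne j,\,k\ge 2}X_{jk}\le 0$. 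Hence at a PRS' optimum one may assume $X_{ij}=0$ for all $i\ne j$ with $i,j\ge 2$.

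Under this normalization, set $w_j:=X_{1j}\ge 0$ for $j\ge 2$; then $X_{11}=x_1-\sum_{j\ge 2}w_j$, $X_{jj}=x_j-w_j$, and the objective rewrites as $(\alpha+\beta)^Tx-\sum_{j\ge 2}(\beta_1+\beta_j)w_j$. Testing $X\succeq xx^T$ against any vector of the form $(0,q_2,\dots,q_n)$ yields
\begin{equation*}
\sum_{j\ge 2}(x_j-w_j)q_j^2\;\ge\;\Bigl(\sum_{j\ge 2}x_jq_j\Bigr)^2\qquad\forall q\in\mathbb{R}^{n-1},
\end{equation*}
which specializes to $\sum_{j\ge 2}w_j\le x_1(1-x_1)$ (with $q=e$) and to $w_j\le x_j(1-x_j)$ for each $j$ (with $q=e^{j-1}$). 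By judiciously selecting $q$ and leveraging the $2$-sparse structure of the $g$-minimizer, the rewritten objective can be bounded below by the one-dimensional quadratic $\alpha_1t+\beta_1t^2+\alpha_{k^*}(1-t)+\beta_{k^*}(1-t)^2$ at a suitable $t\in[0,1]$, which is at least $\min_{\Delta^n}g$. The hard part is this last step: choosing the right family of $q$ vectors, or equivalently constructing an explicit dual certificate for the PRS' SDP, so that the ellipsoidal SDP/RLT inequalities reproduce the two-variable quadratic in $g$ along the $\{1,k^*\}$ coordinate pair. A parallel route I would pursue is a two-dimensional aggregation of the coordinates $\{2,\dots,n\}$: the aggregated $(\bar x,\bar X)$ is PRS'-feasible in dimension two and, by Theorem~\ref{thm:exactness-kappa=2-stSimplexn=2}, lies in $\conv(\cS^\kappa)$ for $n=2$, but one must still track the $\alpha_j$ terms, which do not aggregate linearly.
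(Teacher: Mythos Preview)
Your treatment of Cases~1 and~2 is correct and in fact cleaner than the paper's: you observe that already the $\mathbf{P}$ relaxation is exact in these two cases (this is also the content of Cases~1 and~2 of Proposition~\ref{prop:stSimplex-whenPOWexact}), so the sandwich $\cS^\kappa\subseteq\cS^\kappa_{P,R,S}\subseteq\cS^\kappa_P$ closes the gap immediately. The paper instead argues uniformly at the level of a $\mathbf{PRS'}$ optimizer for all three cases.

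For Case~3, your Laplacian-shift normalization is correct and delivers exactly the reduction the paper obtains via its Claim~\ref{claim4}: at a $\mathbf{PRS'}$ optimum one may assume $X_{ij}=0$ for all $i\neq j$ with $\beta_i,\beta_j\le 0$. Your rewriting of the objective as $(\alpha+\beta)^Tx-\sum_{j\ge2}(\beta_1+\beta_j)w_j$ and the quadratic-form inequalities obtained by testing $X\succeq xx^T$ against vectors $(0,q)$ are also correct.

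The genuine gap is precisely where you flag it. You never carry out the ``hard part'': showing that the normalized objective is bounded below by $\min_{\Delta^n}g$. The two consequences you extract, $\sum_{j\ge2}w_j\le x_1(1-x_1)$ and $w_j\le x_j(1-x_j)$, are not by themselves strong enough when the signs of $\beta_1+\beta_j$ vary across $j$ and the $\alpha_j$ are arbitrary; a single test vector $q$ cannot control all the $w_j$ with the correct coefficients, and you give no construction of a family of $q$'s or of a dual certificate. Your parallel aggregation route also stalls, for the reason you state. So Case~3 remains an outline, not a proof.

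The paper closes Case~3 by a different and more structural mechanism. After the same normalization, it takes the Schur complement of $\bigl[\begin{smallmatrix}\hat X&\hat x\\\hat x^T&1\end{smallmatrix}\bigr]$ with respect to the diagonal block of indices $j$ with $\beta_j\le 0$ and $\hat X_{jj}>0$; because $\hat Xe=\hat x$ and the off-diagonal of that block vanishes, the complement collapses to the scalar $1-\sum_j\hat x_j^2/\hat X_{jj}$ times a rank-one $2\times2$ matrix, forcing $\sum_j\hat x_j^2/\hat X_{jj}\le 1$. This inequality is exactly the convex-combination budget needed to write the full lifted matrix as an explicit completely positive sum of rank-one terms $vv^T$ with $v\in\Delta^n\times\{1\}$. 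Hence the $\mathbf{PRS'}$ optimum is a convex combination of points of $\cS^\kappa$, and one of them must match or beat the relaxed objective value --- no direct lower bound on the objective is ever attempted.
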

\begin{proof}
% We will ``prove'' this ``theorem'' by showing that minimizing a linear function $\alpha^T x + \beta^T y$ yields the same optimal value over $\mathcal{S}^\kappa$ and $\mathcal{S}^\kappa_{P,R,S}$. Let $  J^-  := \{j  : \beta_j \le 0\} $ and $  J^+  := \{j : \beta_j > 0\} $. 

    Consider an optimal solution $(\hat x,   \hat X)$ to the \textbf{PRS'} relaxation. The assertion of the theorem follows if the set $J_< = \{j : \hat X_{jj} >\hat x_j^2\} = \emptyset$ since in this case $\hat X = \hat x \hat x^T$. In the rest of the proof, let us assume that $J_< \neq \emptyset$ for every optimal solution to the \textbf{PRS'} relaxation.

    We first prove two preliminary results: 
    \begin{claim}\label{claim1}
        For $i\not\in J_<$, we have  $\hat X_{ij}=\hat x_i \hat x_j$ for any $j=1,\dots,n$.
    \end{claim}
    \begin{proof}
        Consider positive semidefinite matrix $\hat X - \hat x \hat x$. Note that if a diagonal entry of this matrix is zero, i.e., $\hat X_{ii}-\hat x_i^2=0$ for some $i$, then we have that the off-diagonal entries $\hat X_{ij}-\hat x_i \hat x_j=0$  for every $j=1,\dots,n$. Hence, the result follows.
    \end{proof}  
    
    \begin{claim}\label{claim4}
          If there exist $i,j\in J_<$ such that $\beta_i + \beta_j < 0$, then $\hat X_{ij}=0$.
    \end{claim}
    \begin{proof}  Suppose not. Then, we can {construct} 
    %switch
    a new solution $\tilde X$ that agrees with $\hat X$ on each entry except: %that 
    \[
    \begin{bmatrix}
        \tilde X_{ii} & \tilde X_{ij} \\
        \tilde X_{ij} & \tilde X_{jj} \\
    \end{bmatrix}
    =
    \begin{bmatrix}
        \hat X_{ii} & \hat X_{ij} \\
        \hat X_{ij} & \hat X_{jj} \\
    \end{bmatrix}
    +
    \begin{bmatrix}
        \hat X_{ij} & -\hat X_{ij} \\
        -\hat X_{ij} & \hat X_{ij} \\
    \end{bmatrix}.
    \]
    Notice that $\tilde X$ is obtained from   $\hat X$ by a diagonally dominant shift, hence, it is positive semidefinite. In addition, it maintains the row sums being the same as $\hat X$. Therefore, it is a feasible solution. However, the objective function difference between   $(\hat x,  \hat X)$ and $(\hat x,  \tilde X)$ is $(\beta_i \hat X_{ii} + \beta_j \hat X_{jj}) - (\beta_i \tilde X_{ii} + \beta_j \tilde X_{jj}) = -(\beta_i+\beta_j)\hat X_{ij} > 0$. However, this is a contradiction to  $(\hat x,  \hat X)$ being an optimal solution.
    \end{proof}
    In the remainder of the proof, we will assume that $\hat X_{ij}=0$ for $i,j$ such that $\beta_i + \beta_j \le 0$ (notice that if $\beta_i + \beta_j = 0$, we can find a solution that satisfies this property). %In particular, we have $\hat X_{ij}=0$ for $i,j\in J_<^-$.
    
    Now, we will prove each of the three sufficient conditions separately.
    \begin{itemize}
 %       \item This case follows as a consequence of Theorem~\ref{thm:exactness-kappa=2-stSimplexn=3}.
        %
        \item Case 1:
        Due to Claim~\ref{claim4}, we have that $\hat X_{ij}=0$ for $i\neq j$, implying that $\hat X_{jj} =  \hat x_j $. Then, observe that  we have
    \[
    (\hat x, \hat X) = \sum_{j\in J_<} \hat x_j ( e^j,(e^j)(e^j)^T   ).
    \]
    In other words, $(\hat X, \hat x)$ is a convex combination of $|J_<|$ many feasible solutions.  Therefore, at least one of these solutions must have  an objective function value at least as good as $(\hat X, \hat x)$, which is  a contradiction. 
        \item Case 2: 
        Let us consider a new feasible solution  $(\hat x,  \hat x \hat x^T)$ to the \textbf{PRS'} relaxation. Observe that  the objective function difference between   $(\hat x,  \hat X)$ and $(\hat x,   \hat x \hat x^T)$ is $   \sum_{i\in J_<} \beta_i (\hat X_{ii} - \hat x_i^2) \ge 0 $. This implies that $(\hat x,   \tilde X)$ is also an optimal solution, which is   a contradiction. 
        \item 
        {Case 3:
        Let 
        $J_<^- :=\{ j\in J_< : \hat X_{jj}>\hat x_j^2, \ \beta_j \le 0\}$, 
        $J_=^- :=\{ j  : \hat X_{jj}=\hat x_j^2 >0, \ \beta_j \le 0\}$,
        $J_0^- :=\{ j  : \hat X_{jj}=\hat x_j^2 = 0, \ \beta_j \le 0\}$.
        If $J_<^- = \emptyset$, then the statement is trivially true as we reduce to Case 2 with a single variable having a positive $\beta_j$ coefficient.
        Suppose, without loss of generality, that
        $\beta_{n}>0$, $J_<^- = \{1,\dots,k\}$, $J_=^-=\{k+1,\dots,k'\}$ and $J_0^- =\{k'+1,\dots,n-1\}$.
        In the rest of the proof, we will consider the  following submatrix of $\begin{bmatrix}
              \hat X & \hat x \\
              \hat x^T   & 1
        \end{bmatrix}$, which is obtained by deleting the identically zero rows and columns in $J_0^-$:
        \begin{equation}\label{eq:partialSolMatrix}
         \begin{bmatrix}
            \hat X_{11} &  &  &  \hat X_{1n} &\hat x_1 \\
             & \ddots &  & \vdots & \vdots \\
             &    & \hat X_{k',k'}& \hat X_{k',n} & \hat x_{k'} \\
            \hat X_{1n} &\cdots & \hat X_{k',n}& \hat X_{nn} &\hat x_n \\
            \hat x_1 & \cdots & \hat x_{k'} & \hat x_n & 1
        \end{bmatrix}.
        \end{equation}
        Notice that the  Schur complement of this matrix with respect to the first $k'$ rows and columns is obtained as
        \[
        0 \preceq\begin{bmatrix}
          \hat X_{nn}- \sum_{j = 1}^{k'} \frac{\hat X_{jn}^2}{ \hat X_{jj}} &  \hat x_n- \sum_{j = 1}^{k'} \frac{\hat x_j \hat X_{jn}}{ \hat X_{jj}}    
\\
        \hat x_n- \sum_{j = 1}^{k'} \frac{\hat x_j \hat X_{jn}}{ \hat X_{jj}}   & 1- \sum_{j = 1}^{k'} \frac{\hat x_j^2}{ \hat X_{jj}} 
        \end{bmatrix} = \bigg(1- \sum_{j = 1}^{k'} \frac{\hat x_j^2}{ \hat X_{jj}} \bigg)\begin{bmatrix}
            1&1\\1&1
        \end{bmatrix},
        \]
        where the equality follows as a consequence of $\hat X e = \hat x$. Since we have $1- \sum_{j = 1}^{k'} \frac{\hat x_j^2}{ \hat X_{jj}}  \ge 0$, we conclude that $ |J_=^-| \le 1$. Now, we have two subcases:
        \begin{itemize}
        \item $ |J_=^-| = 1$. In this case, we must have $k=0$ and $k'=1$, meaning that $J_<^- = \emptyset$. In this situation, we again reduce to Case 2 with a single variable having a positive $\beta_j$ coefficient.
        \item  $ |J_=^-| = 0$. In this case, we must have $k=k'$.
        Since we have $1- \sum_{j = 1}^k \frac{\hat x_j^2}{ \hat X_{jj}} \ge 0$, we can write an explicit completely positive decomposition of the solution matrix in equation~\eqref{eq:partialSolMatrix} as (recall Corollary~\ref{cor:CPP})
        $$%\begin{bmatrix}
          %    \hat X & \hat x \\
         %     \hat x^T   & 1
        %\end{bmatrix} =
        \sum_{j = 1}^k
        \frac{\hat x_j^2}{\hat X_{jj}}
          \bigg(\frac{\hat X^j}{\hat x_j}\bigg) \bigg(\frac{\hat X^j}{\hat x_j}\bigg)^T +
        \bigg (1- \sum_{j = 1}^k \frac{\hat x_j^2}{ \hat X_{jj}}\bigg)
        (e^n+e^{n+1})(e^n+e^{n+1})^T,
        $$
        where $\bar X^j$ is the $j$-th column of matrix $\bar X$.
        Notice that  the solution matrix on the left-hand side is a convex combination of $k+1$ many feasible solutions.  Therefore, at least one of these solutions must have  an objective function value at least as good as $(\hat X, \hat x)$, which is  a contradiction. 
        \end{itemize}
        }

    \end{itemize}
    \
\end{proof}
We note that results similar to those of Cases 1 and 2 in Theorem~\ref{thm:exactness-kappa=2-specialCases} have also~been shown in~\cite{gokmen2022standard} utilizing conic duality whereas our approach uses only the primal~problem.

%Due to our intuition from 
{Motivated by the insights derived from}
Theorem~\ref{thm:exactness-kappa=2-specialCases} and our extensive computational experiments reported in Section~\ref{sec:comp-results} (see, in particular, Figure~\ref{fig:gapTime-kappa} with $\kappa=2$), we have come up with the following conjecture:
\begin{cnj}\label{conj:exactness-kappa=2-stSimplexn-any}
    Let $\cG=\Delta^n$ and $\kappa = 2$. Then, $\OO_{\cS^\kappa,\cS^\kappa_{P,R,S}}{(\alpha,\beta)}=0$ {for all $\alpha, \beta \in \mathbb{R}^n \times \mathbb{R}^n$,} implying that $\conv(\cS^\kappa) = \cS^\kappa_{P,R,S} $.  
\end{cnj}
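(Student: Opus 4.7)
By Remark~\ref{rem:optLinear=convHull}, it suffices to show $O_{\cS^2,\cS^2_{P,R,S}}(\alpha,\beta)=0$ for every $(\alpha,\beta)\in\rr^n\times\rr^n$; that is, an optimum of the \textbf{PRS'} relaxation is always attained at a point of $\conv(\cS^2)$. Fix an optimal $(\hat x,\hat X)$ and apply Claim~\ref{claim4} to enforce $\hat X_{ij}=0$ whenever $\beta_i+\beta_j<0$. When $\beta_i+\beta_j=0$, the diagonally dominant swap of Claim~\ref{claim4} preserves feasibility and objective value, so we may additionally assume $\hat X_{ij}=0$ whenever $\beta_i+\beta_j\le 0$. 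Partitioning the indices as $J_+=\{j:\beta_j>0\}$ and $J_-=\{j:\beta_j\le 0\}$, this sparsification forces the block $\hat X_{J_-\times J_-}$ to be diagonal and forces $\hat X_{ij}=0$ for all $i\in J_-$, $j\in J_+$ with $\beta_i+\beta_j\le 0$.

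The goal is then to construct an explicit completely positive decomposition
\begin{equation*}
\begin{bmatrix} \hat X & \hat x \\ \hat x^T & 1 \end{bmatrix}
= \sum_{k} \lambda_k \begin{pmatrix} u_k \\ 1 \end{pmatrix}\begin{pmatrix} u_k \\ 1 \end{pmatrix}^T,
\end{equation*}
with $u_k\in\Delta^n$, $\lambda_k\ge 0$, and $\sum_k\lambda_k=1$. This immediately yields $(\hat x,\diag(\hat X))=\sum_k\lambda_k(u_k,u_k\circ u_k)\in\conv(\cS^2)$, attaining the optimal value. The approach is inductive: for each $j\in J_-$ with $\hat X_{jj}>0$, use the Schur complement identity developed in Case 3 of Theorem~\ref{thm:exactness-kappa=2-specialCases} to extract a rank-one term proportional to $(e^j+e^{n+1})(e^j+e^{n+1})^T$ when the $j$-th row of $\hat X$ is concentrated on $\{j\}$, or a more general rank-one term with cross-support when edges into $J_+$ are present. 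After exhausting $J_-$, one is left with a residual matrix supported on $J_+\cup\{n+1\}$; since all $\beta_j$ there are positive, one extends Case 2 to a ``scaled simplex'' setting in which $\sum_{j\in J_+}x_j$ need not equal~$1$ to show that the residual can be taken as a rank-one outer product, supplying a final factor $(u,1)$ with $u\in\Delta^n$.

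The main obstacle is handling the cross entries $\hat X_{ij}$ with $i\in J_-$ and $j\in J_+$ satisfying $\beta_i+\beta_j>0$, which survive the sparsification and obstruct the clean one-index-at-a-time Schur peeling of Case 3 (which relied on $|J_+|=1$). Once $|J_+|\ge 2$, taking a Schur complement at a single index in $J_-$ leaves a residual on $J_+\cup\{n+1\}$ that is PSD and elementwise nonnegative but not obviously of the desired ``scaled simplex'' form, because the effective row sums on the $J_+$ block depend on which indices in $J_-$ are still active. A promising route is to process all of $J_-$ simultaneously via a block Schur complement and then, on the residual $J_+\cup\{n+1\}$ block, invoke a DNN~$=$~CPP equivalence driven by the chordal or tree sparsity pattern of the graph $G=\{(i,j):\beta_i+\beta_j>0\}$; alternatively, one may dualize the coupling constraint $\hat X e=\hat x$ via Lagrangian duality to decouple the $J_+$ and $J_-$ subproblems and apply Case 2 to the $J_+$ side. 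Either route requires verifying a subtle compatibility of signs or dual multipliers, and this compatibility — rather than the local decomposition step — is what I expect to be the hard part of a complete proof.
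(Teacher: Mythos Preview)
The statement you are attempting to prove is a \emph{conjecture} in the paper, not a theorem. The paper does not provide a proof; it explicitly introduces Conjecture~\ref{conj:exactness-kappa=2-stSimplexn-any} as being ``motivated by the insights derived from Theorem~\ref{thm:exactness-kappa=2-specialCases} and our extensive computational experiments,'' and in the conclusion lists ``resolving Conjecture~\ref{conj:exactness-kappa=2-stSimplexn-any}'' as a future research direction. There is therefore no paper proof to compare your proposal against.

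Your proposal is not a proof either, and you say so yourself: the final paragraph identifies the compatibility/decomposition step as ``the hard part of a complete proof'' without resolving it. What you have written is a reasonable strategic outline that extends the machinery of Theorem~\ref{thm:exactness-kappa=2-specialCases} (sparsification via Claim~\ref{claim4}, Schur peeling on $J_-$) and correctly isolates where that machinery runs out, namely when $|J_+|\ge 2$ and there remain cross entries $\hat X_{ij}$ with $i\in J_-$, $j\in J_+$, $\beta_i+\beta_j>0$. That diagnosis matches the boundary of what the paper actually proves.

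Two cautions about the routes you sketch. First, the ``DNN $=$ CPP via sparsity'' idea is unlikely to close the gap: after sparsification the support graph is complete on $J_+$ (since $\beta_i+\beta_j>0$ for all $i,j\in J_+$), so for $|J_+|\ge 4$ the residual $(|J_+|+1)\times(|J_+|+1)$ block has no useful chordal or acyclic structure, and the general DNN $\neq$ CPP obstruction applies. Second, the Lagrangian decoupling idea is vague as stated; relaxing $Xe=x$ destroys precisely the structure that lets Case~2 reduce to a rank-one residual, so it is not clear what survives on the $J_+$ side. If the conjecture is true, a proof will likely need a genuinely new ingredient beyond the Case~1/2/3 template rather than a recombination of it.
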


\subsection{Approximation results for general 
$\kappa$}\label{sec:approxgenkappa}

%\subsubsection{TO BE FORMATTED : If we optimize a linear function, under what conditions relaxation $\cS^\kappa_P$  is exact?}

We now switch our attention from exactness results to approximation guarantees for general $\kappa$.

\subsubsection{Distance-based approximation results}

Our first distance-based approximation result below gives an upper bound on the worst-case distance between  the nonconvex set  $\cS^\kappa$ and  the set $\cS_P^\kappa$ (i.e., the feasible region of the \textbf{P} relaxation), which increases with both the exponent~$\kappa$ and the dimension~$n$.

\begin{proposition}\label{prop:dbound-stSimplex-ub}
Let {$n \geq 2$},   $\cG=\Delta^n$ and $\kappa>1$. Then, we have
$$D_{\cS^\kappa,\mathcal{S}_{P}^\kappa} := \max_{ (\hat x, \hat y)\in \mathcal{S}_{P}^\kappa} \min_{ (x,y)\in \cS^\kappa} \| (x-\hat x, y-\hat y) \|_1 \le 1 - n^{1-\kappa}  .$$
\end{proposition}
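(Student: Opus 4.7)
The plan is to explicitly construct a nearby point in $\cS^\kappa$ for any $(\hat x,\hat y)\in\cS_P^\kappa$ and then bound the resulting distance by a scalar inequality on the simplex.

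Given $(\hat x,\hat y)\in\cS_P^\kappa$, the natural choice of a feasible point in $\cS^\kappa$ is $(\hat x, z)$ where $z_j := \hat x_j^\kappa$ for $j=1,\dots,n$. Since $\hat x\in\Delta^n$, we indeed have $(\hat x,z)\in\cS^\kappa$. With this choice,
\begin{equation*}
\min_{(x,y)\in\cS^\kappa}\|(x-\hat x, y-\hat y)\|_1 \;\le\; \|(0, z-\hat y)\|_1 \;=\; \sum_{j=1}^n |\hat y_j - \hat x_j^\kappa|.
\end{equation*}

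The definition of the \textbf{P} relaxation gives $\hat x_j^\kappa \le \hat y_j \le \hat x_j$ for every $j$, so the absolute values drop and we bound term by term using the upper inequality:
\begin{equation*}
\sum_{j=1}^n (\hat y_j - \hat x_j^\kappa) \;\le\; \sum_{j=1}^n (\hat x_j - \hat x_j^\kappa) \;=\; 1 - \sum_{j=1}^n \hat x_j^\kappa,
\end{equation*}
where the last equality uses $\hat x\in\Delta^n$.

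The final step is to lower bound $\sum_j \hat x_j^\kappa$ over $\Delta^n$. Since $\kappa>1$, the map $t\mapsto t^\kappa$ is convex, so Jensen's inequality with uniform weights $1/n$ yields $\frac{1}{n}\sum_j \hat x_j^\kappa \ge \bigl(\frac{1}{n}\sum_j \hat x_j\bigr)^\kappa = n^{-\kappa}$, hence $\sum_j \hat x_j^\kappa \ge n^{1-\kappa}$. Combining everything gives the claimed upper bound $1 - n^{1-\kappa}$, uniformly in $(\hat x,\hat y)\in\cS_P^\kappa$.

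There is no real obstacle here; the only modeling choice is picking the right candidate point in $\cS^\kappa$, and projecting $\hat y$ down to $\hat x^\kappa$ (rather than, say, moving $\hat x$ as well) is what makes the bound reduce cleanly to the scalar quantity $1-\sum_j \hat x_j^\kappa$, which is then controlled by a one-line application of Jensen's inequality. The resulting bound is also tight in the limiting sense at $\hat x_j = 1/n$ with $\hat y_j = \hat x_j$, which is a useful sanity check and sets up the matching lower bound proved in the subsequent proposition.
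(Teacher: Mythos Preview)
Your proof is correct and follows essentially the same approach as the paper: both fix the candidate $(\hat x,\hat x^\kappa)\in\cS^\kappa$, drop the absolute values using $\hat x_j^\kappa\le\hat y_j\le\hat x_j$, and reduce to bounding $1-\sum_j\hat x_j^\kappa$ over $\Delta^n$. The only cosmetic difference is that you name Jensen's inequality explicitly for the last step, whereas the paper simply writes the maximizer $\hat x_j=1/n$ and the resulting value directly.
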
 
\begin{proof}
Since we want to establish an upper bound to 
$D_{\cS^\kappa,\mathcal{S}_{P}^\kappa} $, we can upper bound the inner minimization problem. This can be done in the inner optimization problem  by just substituting a feasible solution. 
In particular, we will substitute, ${x}_i :=\hat x_i$ and ${y}_i:=\hat x_i^\kappa$  for all $i=1,\dots,n$, where $\sum_{i=1}^n \hat x_i =1$. Then, we  have
$$\min_{({x}, {y}) \in \mathcal{S}^\kappa} \| x - \hat x\|_1 + \| y - \hat y\|_1 \leq \sum_{i =1}^n |\hat y_i -\hat x_i^\kappa|.$$
Therefore, we are left with solving:
\begin{eqnarray*}
    &\max_{(\hat x,\hat y)\in\Delta^n\times\rr^n} \left\{ \sum_{i = 1}^n |\hat y_i -\hat x_i^\kappa| : \hat x_i ^\kappa \leq\hat y_i \leq\hat x_i , \  i=1,\dots,n \right\}.
\end{eqnarray*}
Since $\hat y_i \geq\hat x^\kappa_i$  for points in $\mathcal{S}_P^\kappa$, we obtain 
\begin{eqnarray*}
        \max_{(\hat x,\hat y) \in \mathcal{S}_P^\kappa} \sum_{i = 1}^n |\hat y_i -\hat x_i^\kappa| &=& \max_{(\hat x,\hat y) \in \mathcal{S}_P^\kappa} \sum_{i = 1}^n (\hat y_i -\hat x_i^\kappa) = \max_{\hat  x\in\Delta^n} \sum_{i = 1}^n (\hat x_i-\hat x_i^\kappa) \\ 
        &=& n \left( \frac1n - \frac{1}{n^\kappa} \right)  
        =  1 - n^{1-\kappa},
\end{eqnarray*}
which completes the proof.
%}
\end{proof}
Notice that $1 - n^{1-\kappa}$ converges to 0 and 1 as $\kappa\to1^+$ and $\kappa\to\infty$, respectively, and it is equal to $1-\frac1n$ for $\kappa=2$.

{How good is the upper bound presented in the above result?}
Our second distance-based approximation result in Proposition~\ref{prop:dbound-stSimplex-lb} gives a matching lower bound on the worst-case distance between  the nonconvex set  $\cS^\kappa$ and its convex hull, which increases with the exponent~$\kappa$ and dimension $n$.
{This lower bound shows that $\textbf{P}$ is indeed a ``reasonable" convex relaxation, since even   $\cS^\kappa$ and its convex hull have distances of similar order between them.}
To prove Proposition~\ref{prop:dbound-stSimplex-lb}, we   need a technical lemma:

%\newpage
\begin{lemma}\label{lem:tech}
Consider the function
$f(x) = \left|x - \frac{1}{n}\right| + \left|x^{\kappa}  - \frac{1}{n}\right| + x$.
If $n \geq 2$ and $\kappa > 1$, then the minimizer of this function is achieved at $x = \frac{1}{n}$. 
\end{lemma}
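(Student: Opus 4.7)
The plan is to perform a case analysis on the sign patterns of the two absolute values in $f$. Since $\kappa > 1$ forces $1/n \leq n^{-1/\kappa}$, the breakpoints $1/n$ and $n^{-1/\kappa}$ partition $[0, \infty)$ into three subintervals on each of which $f$ simplifies to a smooth expression whose monotonicity can be read off directly.

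On $[0, 1/n]$ the function reduces to $f(x) = 2/n - x^\kappa$, which is strictly decreasing, so its minimum on this interval is $f(1/n) = 2/n - 1/n^\kappa$. On $[n^{-1/\kappa}, \infty)$ the function reduces to $f(x) = 2x + x^\kappa - 2/n$, which has strictly positive derivative, so its minimum on this interval is $f(n^{-1/\kappa}) = 2 n^{-1/\kappa} - 1/n$. On the middle interval $[1/n, n^{-1/\kappa}]$ the function becomes $f(x) = 2x - x^\kappa$, which is strictly concave because $\kappa > 1$, so its minimum must be attained at one of the two endpoints. Piecing the three cases together, the global minimizer lies in $\{1/n, \, n^{-1/\kappa}\}$.

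It then remains to show $f(1/n) \leq f(n^{-1/\kappa})$. A direct computation reduces this to the scalar inequality
\[
2 n^{1 - 1/\kappa} + n^{1 - \kappa} \,\geq\, 3 \qquad \text{for all } n \geq 2, \ \kappa > 1,
\]
and this is where the main technical obstacle lies. The standard weighted AM--GM approach (applied to two copies of $n^{1-1/\kappa}$ and one copy of $n^{1-\kappa}$) is clean on the range $\kappa \in [1,2]$ but does not yield $\geq 3$ for $\kappa > 2$, so a different device is needed for the whole range.

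My plan is to fix $n \geq 2$, set $\psi(\kappa) := 2 n^{1-1/\kappa} + n^{1-\kappa}$, observe that $\psi(1) = 3$, and show $\psi'(\kappa) \geq 0$ for all $\kappa \geq 1$. Unpacking $\psi'$ reveals this is equivalent to $2 n^{\kappa - 1/\kappa} \geq \kappa^2$. Taking logarithms and using $n \geq 2$ (which is the worst case because $\kappa - 1/\kappa \geq 0$ on $[1,\infty)$), it suffices to prove the single-variable inequality
\[
\kappa + 1 - 1/\kappa \,\geq\, 2 \log_2 \kappa \qquad \text{for all } \kappa \geq 1,
\]
which is routine: the LHS minus RHS equals $1$ at $\kappa = 1$, has a unique stationary point on $[1, \infty)$ (obtained from a quadratic in $\kappa$), and its value at that point is a positive explicit constant of order $1/2$. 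With $\psi(\kappa) \geq 3$ in hand the comparison $f(1/n) \leq f(n^{-1/\kappa})$ holds, and the lemma follows.
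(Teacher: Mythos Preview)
Your proposal is correct and follows essentially the same route as the paper: the same three-interval decomposition, the same concavity argument on the middle piece, the same reduction of $f(1/n)\le f(n^{-1/\kappa})$ to monotonicity of a function of $\kappa$ (your $\psi$ is $3n$ times the paper's $g$), the same worst-case reduction to $n=2$, and the same final single-variable inequality (your $\kappa+1-1/\kappa\ge 2\log_2\kappa$ is the paper's $h(\kappa)\ge 0$ divided by $\ln 2$). The only cosmetic difference is that the paper verifies the last inequality by computing $h''$ and checking the minimum via convexity, whereas you locate the unique stationary point from the quadratic $\kappa^2-(2/\ln 2)\kappa+1=0$ and evaluate there; both give the same critical point $\kappa^*=\tfrac{1}{\ln 2}+\sqrt{\tfrac{1}{(\ln 2)^2}-1}$ with positive minimum value ($\approx 0.456$ in your normalization, $\approx 0.316$ in the paper's).
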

\begin{proof}
We may write the function as:
\begin{eqnarray*}
    f(x) = \left\{\begin{array}{rl} \frac{2}{n} - x^{\kappa},& x\in [0, \frac{1}{n}]\\
    2x - x^{\kappa},& x\in \left[\frac{1}{n}, \left(\frac{1}{n}\right)^{\frac{1}{\kappa}}\right]\\
    - \frac{2}{n} + 2x + x^{\kappa}, & x\in \left[\left(\frac{1}{n}\right)^{\frac{1}{\kappa}}, \infty\right)\\
    \end{array} \right.
\end{eqnarray*}
We see that the function is continuous, is decreasing in the interval $[0, \frac{1}{n}]$, concave in the interval $\left[\frac{1}{n}, \left(\frac{1}{n}\right)^{\frac{1}{\kappa}}\right]$ and increasing in the interval $\left[\left(\frac{1}{n}\right)^{\frac{1}{\kappa}}, \infty\right)$.
Therefore, the only possible optimal solutions are $\left\{\frac{1}{n}, \left(\frac{1}{n}\right)^{\frac{1}{\kappa}}\right\}$. To complete the proof, we need to show that $f(\frac{1}{n}) \leq f(\left(\frac{1}{n}\right)^{\frac{1}{\kappa}})$ or equivalently:
\begin{eqnarray}
    \frac{2}{n} - \left(\frac{1}{n}\right)^{\kappa} \leq 2\left(\frac{1}{n}\right)^{\frac{1}{\kappa}} - \frac{1}{n} 
   \ \Leftrightarrow \ \frac{1}{n} \leq \frac{2}{3}\left(\frac{1}{n}\right)^{\frac{1}{\kappa}} + \frac{1}{3}\left(\frac{1}{n}\right)^{\kappa}. \label{eq:toprove1}
\end{eqnarray}
Note that for $\kappa =1$, (\ref{eq:toprove1}) holds. The proof will be complete by showing that the function $g(\kappa):= \frac{2}{3}\left(\frac{1}{n}\right)^{\frac{1}{\kappa}} + \frac{1}{3}\left(\frac{1}{n}\right)^{\kappa}$ is non-decreasing in $\kappa$ for $\kappa\geq 1$ and $n\geq 2$.

Observe that 
$
g'(\kappa) = \frac{2}{3}\,n^{-1/\kappa}\frac{\textup{ln}(n)}{\kappa^2} 
- \frac{1}{3} (\textup{ln}(n))\,n^{-\kappa}
= \frac{\textup{ln}(n)}{3}\left(\frac{2\,n^{-1/\kappa}}{\kappa^2} - n^{-\kappa}\right).
$
Since $n \ge 2$ implies $\textup{ln}(n) > 0$, it suffices to show
$
\frac{2\,n^{-1/\kappa}}{\kappa^2}  \geq n^{-\kappa}$. Taking logarithms, this inequality is equivalent to showing
$
\textup{ln}(2) - 2\textup{ln}(\kappa) + (\textup{ln}(n))\left(\kappa - \frac{1}{\kappa}\right)\geq 0.
$ Since $n \geq 2$ and $\kappa - \frac{1}{\kappa} >0$ for $\kappa> 1$, it is sufficient to prove that 
$$h(\kappa) := \textup{ln}(2) - 2\ln \kappa + (\textup{ln}(2))\left(\kappa - \frac{1}{\kappa}\right) \geq 0, \quad \textup{for all }\kappa \geq 1.$$
Observe that 
$h'(\kappa) = - \frac{2}{\kappa} + (\textup{ln}( 2))\left(1+\frac{1}{\kappa^2}\right) $ and
$h''(\kappa) = \frac{2}{\kappa^3}\,(\kappa - \textup{ln}(2)).$ Since $\kappa \geq 1>\textup{ln}(2)$, we have that $h''(\kappa) >0$ and thus $h(\kappa)$ is convex in the domain $[1, \infty)$. Therefore, $h$ achieves its global minimum at $\kappa^*$ satisfying,
$h'(\kappa^*) = 0$, i.e., $\kappa^*$ satisfies $\kappa^* + \frac{1}{\kappa^*} = \frac{2}{\textup{ln}(2)}.$ The only value of $\kappa^*$ greater than $1$ is $\frac{1}{\textup{ln}(2)} + \sqrt{(\frac{1}{\textup{ln}(2)})^2 - 1}.$ Plugging this value into $h$ gives a value of $\textup{ln}(2)  - 2\textup{ln}\left(\frac{1}{\textup{ln}(2)} + \sqrt{(\frac{1}{\textup{ln}(2)})^2 - 1}\right) + \textup{ln}(2)(\sqrt{ (\frac{2}{\textup{ln}(2)})^2 - 4}) \approx 0.3161 > 0$. 
%Letting $l:= \textup{ln}(2)$ we have $v = l - 2\textup{ln}\left(\frac{1 + \sqrt{1 - l^2}}{l}\right) + 2\sqrt{1 - l^2} = l - 2\textup{ln}( 1+ \sqrt{1 - l^2}) + 2\textup{ln}(l) + 2\sqrt{1 - l^2} \approx 0.3161 > 0$. 
Thus, $h(\kappa) >0$ for all $\kappa > 1$, completing the proof.  
\end{proof}

%Interestingly, the lower bound is not monotone with respect to  dimension~$n$. %\textcolor{red}{Ques:I must be missing something here...as n increases, allowed values of m increase, so it seems the bound monotonically increases with n...}

\begin{proposition}\label{prop:dbound-stSimplex-lb}
Let    {$n \geq 2$},  $\cG=\Delta^n$ and $\kappa>1$. %{\color{red}such that $n^{1-\kappa} + n^{1-1/\kappa}\ge2$}. 
Then, we have
$$D_{\cS^\kappa, \conv(\cS^\kappa)}  := \max_{ (\hat x, \hat y)\in  \conv(\cS^\kappa)  } \min_{ (x,y)\in \cS^\kappa} \| (x-\hat x, y-\hat y) \|_1 \ge 1-n^{1-\kappa}  .$$ 
\end{proposition}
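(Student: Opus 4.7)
The plan is to establish the lower bound by constructing a specific witness point $(\hat x, \hat y) \in \conv(\cS^\kappa)$ whose $\ell_1$-distance to every point of $\cS^\kappa$ is at least $1 - n^{1-\kappa}$. Motivated by the matching upper bound in Proposition~\ref{prop:dbound-stSimplex-ub}, the natural candidate is the barycenter $(\hat x, \hat y) = \bigl(\tfrac{1}{n} e, \tfrac{1}{n} e\bigr)$. First I would verify membership in $\conv(\cS^\kappa)$: for each $j$, the point $(e^j, e^j)$ lies in $\cS^\kappa$ (since $(e^j)_k^\kappa = (e^j)_k$ for every $k$), and the barycenter is their uniform convex combination.

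With the witness fixed, the lower bound reduces to the separable estimate
$$\min_{x \in \Delta^n} \sum_{j=1}^n \left( \left| x_j - \tfrac{1}{n} \right| + \left| x_j^\kappa - \tfrac{1}{n} \right| \right) \;\geq\; 1 - n^{1-\kappa},$$
since every $(x,y) \in \cS^\kappa$ satisfies $x \in \Delta^n$ and $y_j = x_j^\kappa$. The key step is then to invoke Lemma~\ref{lem:tech}. Evaluating the lemma's function $f$ at its minimizer $x = \tfrac{1}{n}$ yields the explicit value $\tfrac{2}{n} - \tfrac{1}{n^\kappa}$, and rearranging the resulting inequality coordinate-wise gives
$$\left| x_j - \tfrac{1}{n} \right| + \left| x_j^\kappa - \tfrac{1}{n} \right| \;\geq\; \tfrac{2}{n} - \tfrac{1}{n^\kappa} - x_j.$$
Summing over $j$ and using $\sum_j x_j = 1$ collapses the linear slack $-\sum_j x_j$ into $-1$, leaving exactly the desired bound $1 - n^{1-\kappa}$ on the right-hand side.

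The proposition itself is therefore short once Lemma~\ref{lem:tech} is in hand, and the only real obstacle has been arranged upstream: namely, crafting the lemma with precisely the right ``$+x$'' correction term and the right minimum value $\tfrac{2}{n} - \tfrac{1}{n^\kappa}$ so that, upon summation against the simplex constraint, the linear slack telescopes to a constant matching the upper bound in Proposition~\ref{prop:dbound-stSimplex-ub}. Aligning a coordinate-wise inequality with the sole equality constraint so that its slack closes exactly is the substantive idea; everything else is routine bookkeeping.
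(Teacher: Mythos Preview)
Your proposal is correct and follows essentially the same approach as the paper: both pick the barycentric witness $(\tfrac{1}{n}e,\tfrac{1}{n}e)$, reduce to the separable minimization over $\Delta^n$, and invoke Lemma~\ref{lem:tech} coordinate-wise so that the ``$+x_j$'' terms collapse against the simplex constraint. The only cosmetic difference is that the paper phrases the last step as a Lagrangian relaxation (dropping $\sum_j x_j=1$ and adding $\sum_j x_j-1$ to the objective), whereas you rearrange the pointwise inequality and sum; these are the same manipulation, and your explicit verification that the witness lies in $\conv(\cS^\kappa)$ is a nice addition the paper omits.
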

 \begin{proof}
 {We just fix 
 $\hat x_i = \hat y_i = \frac{1}{n}$ for the outer optimization problem. We note that the following is a lower bound:
 \begin{eqnarray*}
    \min_{(x,y) \in \mathcal{S}^\kappa}  \sum_{i=1}^n \left|x_i - \frac{1}{n}\right| + \sum_{i=1}^m \left | y_i - \frac{1}{n}\right |  
     &=& \min_{ x  \in \Delta^n} \sum_{i=1}^n \left|x_i - \frac{1}{n}\right|  + \sum_{i=1}^n \left|x_i^\kappa - \frac{1}{n}\right| \\
     &\geq & \min_{ x  \in \mathbb{R}^n_+} \sum_{i=1}^n \left|x_i - \frac{1}{n}\right|  + \sum_{i=1}^n \left|x_i^\kappa - \frac{1}{n}\right| + \sum_{i =1}^n x_i - 1, 
 \end{eqnarray*}
 where the last inequality follows by taking a particular Lagrangian relaxation.
 Due to {Lemma~\ref{lem:tech}}, we know that the minimizer of the single variable function
 $$f(x_i) = \sum_{i=1}^n \left|x_i - \frac{1}{n}\right|  + \sum_{i=1}^n \left|x_i^\kappa - \frac{1}{n}\right| + x_i $$
 is achieved at $x_i = \frac{1}{n}$. Thus, we obtain
 $$\min_{ x  \in \mathbb{R}^n_+} \sum_{i=1}^n \left|x_i - \frac{1}{n}\right|  + \sum_{i=1}^n \left|x_i^\kappa - \frac{1}{n}\right| + \sum_{i =1}^n x_i - 1  = n\left(\frac{1}{n} - \left(\frac{1}{n}\right)^{\kappa}\right) = 1 - n^{1- \kappa},$$
 completing the proof.}
 \end{proof}

Figure~\ref{fig:dbound} illustrates the comparison of bounds derived in Propositions~\ref{prop:dbound-stSimplex-ub} and \ref{prop:dbound-stSimplex-lb}. As expected,  bounds converge to 1 as $n$ increases and the convergence is faster with larger $\kappa$. % On the other hand, the lower bound for $\kappa=1.25$ attains its maximum value at $m=5$ whereas the lower bounds for $\kappa=2$ and $\kappa=3$ attain their maximum values at $m=3$.  

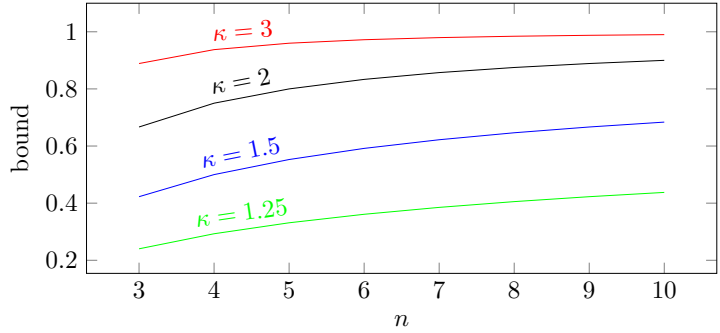
\begin{figure}[H]
%\caption{Upper bound derived in Proposition~~\ref{prop:dbound-stSimplex-ub} (solid line) vs. lower bound derived in Proposition~\ref{prop:dbound-stSimplex-lb} (dashed line) for different $\kappa$ and $n$ values.
\caption{Bounds derived in Propositions~\ref{prop:dbound-stSimplex-ub} and~\ref{prop:dbound-stSimplex-lb}  for different $\kappa$ and $n$ values.
}
\label{fig:dbound}
\centering
%%%
\pgfplotstableread[col sep=&, header=true]{
n	&	ub125	&	lb125	&	ub2	&	lb2	&	ub3	&	lb3	&ub15		&lb15	
%2	&	0.159103585	&	0.112503224	&	0.5	&	0.353553391	&	0.75	&	0.530330086	0.292893219	&	0.292893219	&
3	&	0.240164314	&	0.138658932	&	0.666666667	&	0.384900179	&	0.888888889	&	0.530330086	0.422649731	&	0.422649731	&
4	&	0.292893219	&	0.146446609	&	0.75	&	0.384900179	&	0.9375	&	0.530330086	0.5	&	0.5	&
5	&	0.331259695	&	0.148143839	&	0.8	&	0.384900179	&	0.96	&	0.530330086	0.552786405	&	0.552786405	&
6	&	0.361056896	&	0.148143839	&	0.833333333	&	0.384900179	&	0.972222222	&	0.530330086	0.59175171	&	0.59175171	&
7	&	0.385211847	&	0.148143839	&	0.857142857	&	0.384900179	&	0.979591837	&	0.530330086	0.622035527	&	0.622035527	&
8	&	0.405396442	&	0.148143839	&	0.875	&	0.384900179	&	0.984375	&	0.530330086	0.646446609	&	0.646446609	&
9	&	0.422649731	&	0.148143839	&	0.888888889	&	0.384900179	&	0.987654321	&	0.530330086	0.666666667	&	0.666666667	&
10	&	0.437658675	&	0.148143839	&	0.9	&	0.384900179	&	0.99	&	0.530330086	0.683772234	&	0.683772234	&
}\mydata
     \begin{tikzpicture}[scale=0.95]
\begin{axis}[
   xtick=data,
      ytick={0.0,0.2,0.4,0.6,0.8,1.0},
   xticklabels from table={\mydata}{n},      
   			    xlabel={$n$},
                ylabel={bound},
   ymax=1.1,  
     y=40mm, x=10.5mm,
  ]
\addplot[color=green]   table [ y=ub125, x expr=\coordindex,] {\mydata} node[pos=0.2, sloped, above, text=green] {$\kappa=1.25$}; 
%\addplot[color=green,dashed]   table [ y=lb125, x expr=\coordindex,] {\mydata} node[pos=0.2, sloped, below, text=green] {$\kappa=1.25$};  

\addplot[color=blue]   table [ y=ub15, x expr=\coordindex,] {\mydata} node[pos=0.2, sloped, above, text=blue] {$\kappa=1.5$}; 
%\addplot[color=green,dashed]   table [ y=lb125, x expr=\coordindex,] {\mydata} node[pos=0.2, sloped, below, text=green] {$\kappa=1.25$};  

\addplot[color=black]   table [ y=ub2, x expr=\coordindex,] {\mydata} node[pos=0.2, sloped, above, text=black] {$\kappa=2$}; 
%\addplot[color=black, dashed]   table [ y=lb2, x expr=\coordindex,] {\mydata} node[pos=0.2, sloped, above, text=black] {$\kappa=2$};  

\addplot[color=red]   table [ y=ub3, x expr=\coordindex,] {\mydata} node[pos=0.2, sloped, above, text=red] {$\kappa=3$}; 
%\addplot[color=red, dashed]   table [ y=lb3, x expr=\coordindex,] {\mydata} node[pos=0.2, sloped, above, text=red] {$\kappa=3$};  

\end{axis}
    \end{tikzpicture}
    \end{figure}

\subsubsection{Objective function-based approximation results}
\label{sec:objFunc-approx}

Our first objective function-based approximation result below characterizes the cases under which the \textbf{P} relaxation is exact.   

\begin{proposition}\label{prop:stSimplex-whenPOWexact}
    Let $\cG=\Delta^n$ and $\kappa>1$. Then, we have $O_{\cS^\kappa,\cS^\kappa_P} (\alpha,\beta) =0$ under any of the following cases:
    \begin{itemize}
        \item Case 1: $ J^+:= \{j: \beta_j > 0 \} = \{1,\dots,n\} $.
        \item Case 2: $ J^-:=\{j: \beta_j \le 0 \} = \{1,\dots,n\} $.
        \item Case 3: $ J^+ \neq \emptyset$, $J^- \neq \emptyset$ and either of the following holds, where $\hat J := \{j\in J^+: \alpha_j - \mu < 0\} $ with $\mu:= \min\{ (\alpha_j+\beta_j) : j \in J^- \} $.
            \begin{itemize}
                \item Subcase 3a: $\hat J = \emptyset$.
                \item Subcase 3b: $\hat J \neq \emptyset$ and $\sum_{j\in \hat J} \left(\frac{\mu-\alpha_j}{\kappa\beta_j}\right)^{\frac{1}{\kappa-1}} \ge 1$.
            \end{itemize}
    \end{itemize}
\end{proposition}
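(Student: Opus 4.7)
The plan is to invoke Proposition~\ref{prop:optimizeOverPOW} and reduce the statement, in each case, to exhibiting a feasible $x^\star \in \Delta^n$ that attains the $\mathbf{P}$ relaxation minimum and at which the original and relaxation objectives coincide. Write $\Phi(x) := \sum_{j \in J^+}(\alpha_j x_j + \beta_j x_j^\kappa) + \sum_{j \in J^-}(\alpha_j+\beta_j)x_j$ for the relaxed objective and $\Psi(x) := \sum_{j=1}^n(\alpha_j x_j + \beta_j x_j^\kappa)$ for the original. The elementary inequality $\beta_j(x_j^\kappa - x_j) \ge 0$ for $\beta_j \le 0$, $\kappa > 1$, $x_j \in [0,1]$ yields $\Psi(x) \ge \Phi(x)$ pointwise on $\Delta^n$, so it suffices in each case to produce a minimizer $x^\star$ of $\Phi$ on $\Delta^n$ with $x^\star_j \in \{0,1\}$ for every $j \in J^-$; then $\Psi(x^\star) = \Phi(x^\star)$ and $O_{\cS^\kappa,\cS^\kappa_P}(\alpha,\beta) = 0$.

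Cases~1, 2, and Subcase~3a I plan to handle directly. Case~1 is immediate since $J^- = \emptyset$ forces $\Phi \equiv \Psi$. In Case~2 the objective $\Phi$ is linear on $\Delta^n$, hence minimized at a vertex $e^{j^\star}$ with $j^\star \in \argmin_{j \in J^-}(\alpha_j+\beta_j)$, at which $\Phi(e^{j^\star}) = \mu = \Psi(e^{j^\star})$. For Subcase~3a I rewrite
\[
\Phi(x) - \mu = \sum_{j \in J^+}\bigl[(\alpha_j - \mu)x_j + \beta_j x_j^\kappa\bigr] + \sum_{j \in J^-}(\alpha_j + \beta_j - \mu)x_j,
\]
where the second sum is nonnegative by the definition of $\mu$, and the first is nonnegative because $\hat J = \emptyset$ forces $\alpha_j \ge \mu$ on $J^+$ while $\beta_j > 0$ there; thus $\Phi \ge \mu$ on $\Delta^n$ and $x^\star = e^{j^\star}$ again attains the bound.

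The substantive work is Subcase~3b, where I will show that the relaxation optimum places no mass on $J^-$. Define
\[
F(s) := \min\Bigl\{\sum_{j \in J^+}(\alpha_j x_j + \beta_j x_j^\kappa) \,:\, x_{J^+} \ge 0,\ \sum_{j \in J^+} x_j = s \Bigr\},
\]
which is strictly convex since each $\beta_j x_j^\kappa$ is strictly convex for $j \in J^+$. For fixed $t = \sum_{j \in J^-} x_j$, the $J^-$ subproblem of $\min \Phi$ is linear with minimum $\mu t$, attained by concentrating the $t$-mass on an index achieving $\mu$; hence the relaxation value equals $\min_{t \in [0,1]}[\mu t + F(1-t)]$, and it suffices to show this minimum is attained at $t = 0$, i.e., that $G(s) := \mu(1-s) + F(s)$ is non-increasing on $[0,1]$. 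A KKT analysis of $F(1)$ identifies active indices $\{j \in J^+ : \alpha_j < \mu - \lambda\}$ with optimizers $x_j(\lambda) = \bigl(\tfrac{\mu - \lambda - \alpha_j}{\kappa \beta_j}\bigr)^{1/(\kappa-1)}$; the constraint $\sum x_j = 1$ is solvable with a multiplier $\lambda \ge 0$ precisely when the value at $\lambda = 0$, namely $\sum_{j \in \hat J}\bigl(\tfrac{\mu-\alpha_j}{\kappa\beta_j}\bigr)^{1/(\kappa-1)}$, is at least $1$, which is exactly the Subcase~3b hypothesis. The envelope theorem then gives $F'(1) = -\lambda \le 0$, and convexity of $F$ propagates this to $G$ non-increasing. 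The main obstacle will be the Lagrangian/active-set bookkeeping: verifying strict monotonicity of $\lambda \mapsto \sum_j x_j(\lambda)$ so that the multiplier is uniquely determined, handling the discrete jumps of the active set at the thresholds $\alpha_j = \mu - \lambda$, and justifying the envelope identity $F'(1) = -\lambda$ in the presence of the $x_{J^+} \ge 0$ constraints. Once these details are in place, the resulting $x^\star$ satisfies $x^\star_{J^-} = 0$, hence $\Phi(x^\star) = \Psi(x^\star)$, completing Subcase~3b.
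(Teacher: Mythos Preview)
Your argument is correct and follows essentially the same path as the paper: Cases~1, 2, and 3a are handled identically, and for Subcase~3b both proofs reduce (via Proposition~\ref{prop:optimizeOverPOW}) to the convex problem $\min\bigl\{\sum_{j\in\hat J}[(\alpha_j-\mu)x_j+\beta_j x_j^\kappa]:\sum_{j\in\hat J}x_j\le 1,\ x\ge0\bigr\}$ and argue that the hypothesis forces $\sum_{j\in\hat J}x_j=1$ at the optimum, leaving no mass on~$J^-$. One slip to correct: the envelope theorem gives $F'(1)=\mu-\lambda$ (the multiplier you parametrize as $\mu-\lambda$), not $-\lambda$, and indeed $F'(1)$ need not be nonpositive; what you actually need and have is $G'(1)=F'(1)-\mu=-\lambda\le 0$, after which convexity of $G$ (not of $F$) yields $G$ non-increasing on $[0,1]$.
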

\begin{proof}
Consider     problems $z(\alpha, \beta):=     \min\left\{  \alpha^T x + \beta^T y : (x,y) \in \cS^\kappa \right\} $ and 
\begin{equation}\label{eq:linearObjS0}
z_P(\alpha, \beta):=     \min\left\{  \alpha^T x + \beta^T y : (x,y) \in \cS^\kappa_P \right\} .
\end{equation}
% and
% \begin{equation}\label{eq:linearObjS}
% z(\alpha, \beta):=     \min\left\{  \alpha^T x + \beta^T y : (x,y) \in \cS^\kappa \right\} . 
% \end{equation}
Let us split the indices as $J^+ := \{j: \beta_j > 0 \}$ and $J^- := \{j: \beta_j \le 0 \}$. 
Observe that {when optimizing over $\cS^\kappa_P$}, there exists an optimal solution with the following structure (see Proposition~\ref{prop:optimizeOverPOW}): $y_j= x_j^\kappa$ for $j\in J^+$, and  $y_j= x_j$ for $j\in J^-$. Therefore, the value of $z_P(\alpha, \beta)$ defined in~\eqref{eq:linearObjS0}    is equivalent to the following:
\begin{equation*}   \min\left\{ \sum_{j\in J^+} (\alpha_j x_j + \beta_j x_j^\kappa) + \sum_{j\in J^-} (\alpha_j + \beta_j) x_j  : \sum_{j\in J^+} x_j + \sum_{j\in J^-} x_j = 1, x \ge 0 \right\}. 
\end{equation*}

\begin{itemize}
    \item Case 1: $ J^+ = \{1,\dots,n\} $. In this case, problem~\eqref{eq:linearObjS0} is exact since $y_j = x_j^\kappa$ for each $j$, that is, we have $z_P(\alpha, \beta) = z(\alpha, \beta)$.
    \item Case 2: $ J^- = \{1,\dots,n\} $. In this case, problem~\eqref{eq:linearObjS0} can be solved as a linear program and the optimal solution is at an extreme point of the simplex. Therefore, we have $x_{j'}=1$ for $j'=\arg\min\{ (\alpha_j+\beta_j) \}$, and $x_j=0$ for $j\neq j'$. Hence, $y_j=x_j^\kappa$ for each $j$ and the relaxation is exact, that is, we have $z_P(\alpha, \beta) = z(\alpha, \beta)$.
    \item Case 3: $ J^+ \neq \emptyset$ and $J^- \neq \emptyset$. Let $j'=\arg\min\{ (\alpha_j+\beta_j) : j \in J^- \}$ and set $\mu = \min\{ (\alpha_j+\beta_j) : j \in J^- \} $.
    Notice that we must have $x_{j'} = 1-\sum_{j\in J^+}$ and $x_j=0$ for $j\in J^- \setminus\{j'\}$ in an optimal solution. With this simplification, we obtain the following 
     equivalent problem for the value of $z_P(\alpha, \beta) $:
    \begin{equation*} 
    \begin{split}
    &    \min\left\{ \sum_{j\in J^+} (\alpha_j x_j + \beta_j x_j^\kappa) +  \mu (1-\sum_{j\in J^+} x_j)   : \sum_{j\in J^+} x_j \le 1, x_j \ge 0 , j\in J^+ \right\} \\
    =
   & \mu +  \min\left\{ \sum_{j\in J^+} [ (\alpha_j-\mu) x_j + \beta_j x_j^\kappa]     : \sum_{j\in J^+} x_j \le 1, x_j \ge 0 , j\in J^+ \right\} \\
    =
   &  \mu + \min\left\{ \sum_{j\in \hat J} [ (\alpha_j-\mu) x_j + \beta_j x_j^\kappa]     : \sum_{j\in \hat J} x_j \le 1, x_j \ge 0 , j\in \hat J \right\}, 
        \end{split}
    \end{equation*}
    where $\hat J := \{j\in J^+: \alpha_j - \mu < 0\}$ (note that, in an optimal solution, $x_j = 0$ for $j\in J^+ \setminus \hat J$).  
    In this case, the relaxation is exact if $\sum_{j\in \hat J}  x_j \in \{0,1\}$.
    Let us look at subcases:
    \begin{itemize}
        \item Subcase 3a: $\hat J = \emptyset$. In this case, the relaxation is exact as $x_{j'}=y_{j'}=1$ and $x_j=y_j=0$ for $j\neq j'$, that is, we have $z_P(\alpha, \beta) = z(\alpha, \beta)$.
        \item Subcase 3b: $\hat J \neq \emptyset$ and $\sum_{j\in \hat J}  \left( \frac{\mu-\alpha_j}{\kappa\beta_j} \right)^{\frac{1}{\kappa-1}} \ge 1$.   In this case, the relaxation is exact as $\sum_{j \in \hat J} x_{j}=1$.
        In this case, although we do not have a closed form expression of $z_P(\alpha, \beta)$, we know that  $z_P(\alpha, \beta) = z(\alpha, \beta)$.
    \end{itemize}
\end{itemize}
    
\end{proof}
     Notice that when  $ J^+ \neq \emptyset$ and $J^- \neq \emptyset$, $\hat J \neq \emptyset$ and $\sum_{j\in \hat J}  \left( \frac{\mu-\alpha_j}{\kappa\beta_j} \right)^{\frac{1}{\kappa-1}} < 1$, the \textbf{P} relaxation is inexact as $\sum_{j \in \hat J} x_{j} \in (0, 1)$, that is, we have
        $z_P(\alpha, \beta) < z(\alpha, \beta)$. However, note that in this case we know the value of $z_P(\alpha, \beta) = \mu +  (1-\kappa)\sum_{j\in \hat{J}}\beta_j  \left( \frac{\mu-\alpha_j}{\kappa\beta_j} \right)^{\frac{\kappa}{\kappa-1}}  $, but not $z(\alpha, \beta)$.

% {\color{blue}omit the proposition below?
% \begin{proposition}\label{prop:conv1}
% Let $T  = \{(\alpha_j, \beta_j)\}_{j = 1}^n\subseteq \mathbb{R}^{2n},$ be the set satisfying (using notation from above) $J^{-}, J^{+} \neq \emptyset$. Then, we have 
% \begin{eqnarray*}
%     \conv(\cS^\kappa) = \cS^\kappa_P \cap \bigcap_{(\alpha, \beta) \in T}\left\{ \alpha^T x + \beta^T y  \geq z(\alpha, \beta) \right\}.
% \end{eqnarray*}
% % \begin{proof}
% %     Note that in case 1, and case 2 (in above analysis), optimizing a linear objective is exact over $\mathcal{S}$ and $\mathcal{S_0}$. So we only consider case 3.
% % \end{proof}
% \end{proposition}
% }

 Proposition~\ref{prop:stSimplex-whenPOWexact} allows us to estimate the probability that the \textbf{P} relaxation is exact when $\cG=\Delta^n$. For this purpose, we generate $10^{6}$ random $(\alpha,\beta)$ vectors and count the number of times the condition $O_{\cS^\kappa,\cS^\kappa_P} (\alpha,\beta) =0$ is satisfied. We report the results of this simulation in Tables~\ref{tab:simulateUniform} and \ref{tab:simulateNormal} for two different distributions.

\begin{table}[H]
    \centering\small
% Table generated by Excel2LaTeX from sheet '100000'
\begin{tabular}{c|ccccccccc}

 $\kappa \backslash n$          &          2 &          4 &          8 &          16 &          32 &          64   \\
\hline
 
      1.25 &     0.9154 &     0.8696 &     0.8937 &     0.9550 &     0.9940 &     0.9999   \\

       1.5 &     0.9064 &     0.8607 &     0.8896 &     0.9540 &     0.9940 &     0.9999  \\

      1.75 &     0.8990 &     0.8551 &     0.8877 &     0.9542 &     0.9939 &     0.9999   \\

         2 &     0.8933 &     0.8518 &     0.8882 &     0.9540 &     0.9940 &     0.9999   \\

       2.5 &     0.8857 &     0.8490 &     0.8894 &     0.9554 &     0.9942 &     0.9999   \\

         3 &     0.8804 &     0.8508 &     0.8925 &     0.9571 &     0.9943 &     0.9999   \\

\end{tabular}  
    \caption{The estimated probability of the \textbf{P} relaxation being exact via simulation when $\cG=\Delta^n$ and the objective function coefficients $(\alpha,\beta)$ are iid sampled from the $\text{Unif}(-1,1)$ distribution.}
    \label{tab:simulateUniform}
\end{table}

\begin{table}[H]
    \centering\small
% Table generated by Excel2LaTeX from sheet '100000'
\begin{tabular}{c|ccccccccc}

 $\kappa \backslash n$          &          2 &          4 &          8 &          16 &          32 &          64 &          128 &          256 &         512  \\
\hline
        1.25 &     0.9212 &     0.8792 &     0.8788 &     0.9021 &     0.9306 &     0.9551 &     0.9726 &     0.9843 &     0.9913  \\

       1.5 &     0.9123 &     0.8698 &     0.8712 &     0.8960 &     0.9268 &     0.9532 &     0.9721 &     0.9840 &     0.9912  \\

      1.75 &     0.9055 &     0.8630 &     0.8662 &     0.8929 &     0.9254 &     0.9522 &     0.9712 &     0.9835 &     0.9907  \\

         2 &     0.9001 &     0.8577 &     0.8634 &     0.8910 &     0.9242 &     0.9512 &     0.9708 &     0.9833 &     0.9907  \\

       2.5 &     0.8911 &     0.8539 &     0.8611 &     0.8890 &     0.9220 &     0.9504 &     0.9703 &     0.9829 &     0.9906  \\

         3 &     0.8848 &     0.8519 &     0.8604 &     0.8896 &     0.9214 &     0.9497 &     0.9697 &     0.9826 &     0.9905  \\

\end{tabular}  
    \caption{The estimated probability of the \textbf{P} relaxation being exact via simulation when $\cG=\Delta^n$ and the objective function coefficients $(\alpha,\beta)$ are iid sampled from the standard normal distribution.}
    \label{tab:simulateNormal}
\end{table}

Interestingly, the probability of   Case 3a, which is independent of the value~$\kappa$, is quite large in the simulation results and it seems to converge to 1 as $n$ increases. This has motivated us to explore whether  this probability can be computed analytically and shown to converge to 1. Below, we carry out this analysis for the uniform distribution.

\begin{proposition}\label{prop:POWcase3aUniform}
Suppose that $(\alpha,\beta)$ are iid   $\text{Unif}(-1,1)$ distributed.  Then,    \begin{equation*}
\begin{split}
\Pr(\textup{Case 3a}) = 2^{-n}\sum_{m=1}^{n-1} {n \choose m} \left( 1 - m 2^{-m}  \left( I_m + \frac{2^{2(m-n)}}{2n-m} \right) \right) 
,
\end{split}
\end{equation*}
where $I_m := \int_{-1}^0 \left[\frac14 - \frac12z\right]^{n-m}  (1-z)^{m-1} dz $ for $m=1,\dots,n-1$.
\end{proposition}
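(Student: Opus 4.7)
My plan is to decompose $\Pr(\text{Case 3a})$ by conditioning on $|J^+|=m$, observe that the event is equivalent to an inequality between the minimum of $|J^+|$ uniforms and the minimum of $|J^-|$ independent convolutions, and then integrate directly against the density of the uniform minimum. First, since $\beta_j \sim \text{Unif}(-1,1)$ independently, $|J^+|$ is Binomial$(n,1/2)$, giving $\Pr(|J^+|=m)=\binom{n}{m}2^{-n}$. Case~3a requires $J^+\neq\emptyset$ and $J^-\neq\emptyset$, so $m\in\{1,\dots,n-1\}$. Given $|J^+|=m$, the condition $\hat J=\emptyset$ reads $W\ge Z$ where $W:=\min_{j\in J^+}\alpha_j$ and $Z:=\min_{i\in J^-}(\alpha_i+\beta_i)$; moreover $W$ and $Z$ are independent because they involve disjoint coordinate sets. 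Because $\alpha_j$ and $\beta_j$ are independent, the conditional law of $\alpha_j$ for $j\in J^+$ remains $\text{Unif}(-1,1)$, so $f_W(w)=\frac{m}{2^m}(1-w)^{m-1}$ on $[-1,1]$.

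For $i\in J^-$ we have $\alpha_i\sim\text{Unif}(-1,1)$ and $\beta_i\sim\text{Unif}(-1,0)$. A direct convolution of their densities yields the survival function of $S:=\alpha_i+\beta_i$:
\begin{equation*}
1-F_S(s)=
\begin{cases}
1, & s\in[-2,-1],\\
\tfrac{1}{4}-\tfrac{s}{2}, & s\in[-1,0],\\
\tfrac{(1-s)^2}{4}, & s\in[0,1].
\end{cases}
\end{equation*}
Consequently $\Pr(Z>w\mid |J^+|=m)=(1-F_S(w))^{n-m}$, and conditioning on $W$ gives
\begin{equation*}
\Pr(W<Z\mid |J^+|=m)=\int_{-1}^{1}(1-F_S(w))^{n-m}\,f_W(w)\,dw.
\end{equation*}

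Splitting this integral at $w=0$ handles the two regimes of $1-F_S$. On $[-1,0]$, the integrand is $(1/4-w/2)^{n-m}\cdot\frac{m}{2^m}(1-w)^{m-1}$, and its integral is exactly $\frac{m}{2^m}I_m$ by the definition of $I_m$. On $[0,1]$, the integrand simplifies to $\left(\frac{(1-w)^2}{4}\right)^{n-m}\cdot\frac{m}{2^m}(1-w)^{m-1}=\frac{m}{2^m\cdot 4^{n-m}}(1-w)^{2n-m-1}$, and a single substitution yields $\int_0^1(1-w)^{2n-m-1}dw=\frac{1}{2n-m}$, so this piece equals $\frac{m}{2^m}\cdot\frac{2^{2(m-n)}}{2n-m}$. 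Adding the two pieces and taking the complement produces $\Pr(W\ge Z\mid |J^+|=m)=1-m\,2^{-m}\bigl(I_m+2^{2(m-n)}/(2n-m)\bigr)$; multiplying by $\binom{n}{m}2^{-n}$ and summing over $m=1,\dots,n-1$ gives the stated identity.

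The only non-routine step is deriving the piecewise survival function of $S$; once that is in hand, the remaining integrations are mechanical. The key structural insight is that it is more convenient to condition on $W$ than on $Z$, because then the tail $(1-F_S(w))^{n-m}$ multiplies the density $\frac{m}{2^m}(1-w)^{m-1}$, producing exactly the integrand of $I_m$ on $[-1,0]$ and a pure monomial on $[0,1]$; the symmetric approach of conditioning on $Z$ would yield a more complicated integrand that does not match the definition of $I_m$.
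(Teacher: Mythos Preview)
Your proof is correct and follows essentially the same route as the paper's: condition on $|J^+|=m$, integrate the survival function of $Z$ (the paper's $\mu$) against the density of $W$ (the paper's $\nu$), split the resulting integral at $0$, and take the complement. One small slip---on $[-2,-1]$ the survival function is $1-F_S(s)=1-\tfrac{(s+2)^2}{4}$ rather than $1$---is harmless, since $W$ is supported on $[-1,1]$ and that piece never enters the integral.
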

\begin{proof}
%Suppose that $(\alpha,\beta)$ are iid sampled from the $\text{Unif}(-1,1)$ distribution. 
Let $\cE_m$ be the event in which  $|J^+|=m$ and  $|J^-|=n-m$, $m=0,\dots,n$. Note that $\Pr(\cE_m) = {n \choose m} 2^{-n}$.  Then, Case 1 and Case 2 are respectively the events $\cE_n$ and  $\cE_0$, which happen with probability $2^{-n}$ each. Now, let us understand the event $\cE_m$, $m=1,\dots,n-1$. 

Let $F(z):=\Pr(\alpha_j+\beta_j \ge z| \beta_j \le 0)$. Using Bayes's rule and the fact that the density of the $\text{Unif}(-1,1)$ distribution is $\frac12$ over $[-1,1]$, we deduce that
\[
F(z)=\begin{cases}
    1-\frac14(z+2)^2 & z\in[-2,-1] \\
    \frac14-\frac12z & z\in[-1,0] \\
    \frac14(z-1)^2 & z\in[0,1]
\end{cases}.
\]
Now, let us derive the right-tail probability of $\mu=\min\{\alpha_j+\beta_j : {j\in J^-}\}$ given the event $\cE_m$. Due to the independence of the random variables involved, we have $$G_m(z):=\Pr(\mu 
\ge z | \cE_m) = [F(z)]^{n-m},$$

Next, we derive the CDF of $\nu:=\min\{\alpha_j : j\in J^+\}$ given the event $\cE_m$. In fact, %we have
\[
H_m(z) := \Pr(\nu \le z | \cE_m) = 1 - \Pr(\nu \ge z | \cE_m) = 1 - \left( \frac{1-z}{2}\right)^m=1-2^{-m}(1-z)^m, 
\]
from which we obtain the PDF of $\nu$ given the event $\cE_m$ as
\(
h_m(z) =  m 2^{-m}(1-z)^{m-1} \). 

We are now ready to compute the probability of Case 3a given the event $\cE_m$ as 
\begin{equation}\label{eq:case3a-given-eventm}
\begin{split}
\Pr(\text{Case 3a} | \cE_m)&= \Pr( \nu \ge \mu |\cE_m) \\ &= 1  - \Pr( \mu \ge \nu |\cE_m) = 1-\int_{-1}^1 G_m(z) h_m(z) dz \\
 &= 1-\int_{-1}^0 G_m(z) h_m(z) dz -\int_{0}^1 G_m(z) h_m(z) dz .
\end{split}
\end{equation}
Note that the second integral in the last line of equation~\eqref{eq:case3a-given-eventm} is easy to compute as %we have
\begin{equation*}
\begin{split}
\int_{0}^1 G_m(z) h_m(z) dz &= \int_{0}^1 \left[\frac14(z-1)^2\right]^{n-m} m2^{-m} (1-z)^{m-1} dz \\
&= m 2^{m-2n} \int_{0}^1   (1-z)^{2n-m-1}  dz    \\
 &= m 2^{m-2n}  \left [ -\frac{(1-z)^{2n-m}}{2n-m} \right ]_{z=0}^1  = \frac{ m 2^{m-2n}  }{2n-m}.
\end{split}
\end{equation*}
The computation of the first integral in the last line of equation~\eqref{eq:case3a-given-eventm} is more convoluted. Observe that 
 \begin{equation}\label{eq:POWcase3a-uniform-Im}
 \begin{split}
     \int_{-1}^0 G_m(z) h_m(z) dz &= \int_{-1}^0 \left[\frac14 - \frac12z\right]^{n-m} m2^{-m} (1-z)^{m-1} dz  \\ 
     &= m2^{-m} \int_{-1}^0 \left[\frac14 - \frac12z\right]^{n-m}  (1-z)^{m-1} dz = m2^{-m} I_m.
  \end{split}
\end{equation}  
%and define $I_m := \int_{-1}^0 \left[\frac14 - \frac12z\right]^{n-m}  (1-z)^{m-1} dz $ for $m=1,\dots,n-1$.
It is straightforward to obtain that 
$I_1 =  \int_{-1}^0 \left[\frac14 - \frac12z\right]^{n-1}    dz =
\left[ (-2) \frac{(\frac14-\frac12z)^{n}}{n} \right]_{z=-1}^0   
 = (-2)\frac{(\frac14)^{n}-(\frac34)^{n}}{n}.
$
Now, if $m\ge2$, using integration by parts, we obtain the relation
\begin{equation}\label{eq:Im-recursion-uniform}
\begin{split}
I_m &= \left[ (-2)(1-z)^{m-1}\frac{(\frac14-\frac12z)^{n-m+1}}{n-m+1} \right]_{z=-1}^0 \\ 
& \quad - \frac{2(m-1)}{n-m+1} \int_{-1}^0 \left[\frac14 - \frac12z\right]^{n-m+1}  (1-z)^{m-2} dz  \\
& = (-2)\frac{(\frac14)^{n-m+1}-2^{m-1}(\frac34)^{n-m+1}}{n-m+1} - \frac{2(m-1)}{n-m+1} I_{m-1}.
\end{split}
\end{equation}
Since we have already computed $I_1$, any  $I_m$ with $m=2,\dots,n-1$ can be obtained recursively using the relation above. 

Finally, we conclude that the probability of Case 3a is computed as
\begin{equation*}
\begin{split}
\Pr(\text{Case 3a}) &= \sum_{m=1}^{n-1} \Pr(\text{Case 3a} | \cE_m) \Pr(\cE_m) = 
2^{-n}\sum_{m=1}^{n-1} {n \choose m}\Pr(\text{Case 3a} | \cE_m)\\ & = 
2^{-n}\sum_{m=1}^{n-1} {n \choose m} \left( 1 - m 2^{-m}  \left( I_m + \frac{2^{2(m-n)}}{2n-m} \right) \right) .
\end{split}
\end{equation*}
%from which the result follows.
\end{proof}

The above analysis enables us to compute the \textit{exact} probability of the \textbf{P} relaxation being exact when $\cG=\Delta^n$ and the objective function coefficients $(\alpha,\beta)$ are iid $\text{Unif}(-1,1)$ distributed. These probabilities are reported in Table~\ref{tab:exact3aUniform}.
\begin{table}[H]
    \centering\small
% Table generated by Excel2LaTeX from sheet '100000'
\begin{tabular}{c|cccccccc}

 $ n$          &          2 &          4 &          8 &          16 &          32 &          64     \\
\hline
 
$\Pr(\text{Case 3a})$  &       0.3541 &     0.6434 &     0.8242 &     0.9409 &     0.9930 &     0.9999  \\

\end{tabular}  
    \caption{The probability of the \textbf{P} relaxation being exact when $\cG=\Delta^n$ and the objective function coefficients $(\alpha,\beta)$ are iid $\text{Unif}(-1,1)$ distributed.}
    \label{tab:exact3aUniform}
\end{table}

Now, we are ready to prove that the \textbf{P} relaxation is exact with high probability  for large $n$ when the objective function coefficients are uniformly distributed. 
\begin{theorem}\label{thm:POWexactUniform}
    Suppose that $(\alpha,\beta)$ are iid   $\textup{Unif}(-1,1)$ distributed. 
    Then, 
    $$\Pr(\textup{Case 3a}) \to 1 \text{ as } n \to \infty.$$ 
    {Hence,  $\Pr(O_{\cS^\kappa,\cS^\kappa_P} (\alpha,\beta) =0) \to 1$ as $n\to\infty$.}
\end{theorem}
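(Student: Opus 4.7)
My plan is to show that the combinatorial sum in Proposition~\ref{prop:POWcase3aUniform} converges to $1$ by splitting it into a dominant ``binomial'' term and two remainder pieces that decay geometrically in $n$. Writing
\[
\Pr(\text{Case 3a}) \;=\; 2^{-n}\!\!\sum_{m=1}^{n-1}\binom{n}{m} \;-\; R_1(n) \;-\; R_2(n),
\]
where $R_1(n) = 2^{-n}\sum_{m=1}^{n-1}\binom{n}{m}\, m\, 2^{-m} I_m$ and $R_2(n) = 2^{-n}\sum_{m=1}^{n-1}\binom{n}{m}\,\frac{m\, 2^{m-2n}}{2n-m}$, the first term equals $1 - 2\cdot 2^{-n}$, which clearly tends to $1$. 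So it suffices to prove $R_1(n) \to 0$ and $R_2(n) \to 0$.

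For $R_2(n)$, I would use the trivial bound $\frac{m}{2n-m}\le 1$ for $1\le m\le n-1$, giving
\[
R_2(n) \;\le\; 2^{-3n}\sum_{m=1}^{n-1}\binom{n}{m} 2^{m} \;\le\; 2^{-3n}(1+2)^n \;=\; (3/8)^n,
\]
which vanishes geometrically. The main step is therefore to control $R_1(n)$, which depends on $I_m$. Here I would exploit that on the integration interval $z\in[-1,0]$ we have $\tfrac14-\tfrac12 z \le \tfrac34$, so
\[
I_m \;\le\; (3/4)^{n-m}\!\int_{-1}^{0}(1-z)^{m-1}\,dz \;=\; (3/4)^{n-m}\,\frac{2^{m}-1}{m}.
\]
Plugging this bound into $R_1(n)$ cancels the factor $m\,2^{-m}$ and yields
\[
R_1(n) \;\le\; 2^{-n}\sum_{m=1}^{n-1}\binom{n}{m}(3/4)^{n-m} \;\le\; 2^{-n}(1+3/4)^{n} \;=\; (7/8)^{n},
\]
which also tends to $0$. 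Combining the three pieces proves the first claim, and the second claim follows immediately because Case~3a is one of the cases in Proposition~\ref{prop:stSimplex-whenPOWexact} under which $O_{\cS^\kappa,\cS^\kappa_P}(\alpha,\beta)=0$.

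The only nontrivial step is the bound on $I_m$, and the naive replacement of $\tfrac14-\tfrac12 z$ by its maximum $\tfrac34$ on $[-1,0]$ turns out to suffice because the resulting geometric factor $(3/4)^{n-m}$ is exactly what is needed to beat the binomial sum $2^{-n}\binom{n}{m}$. A sharper treatment via the recursion~\eqref{eq:Im-recursion-uniform} is possible but unnecessary for establishing the limit; the crude pointwise estimate already delivers a geometric convergence rate of at most $(7/8)^n$.
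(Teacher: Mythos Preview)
Your proof is correct and follows essentially the same three-part decomposition as the paper's proof, arriving at the same geometric rate $(7/8)^n$ for $R_1(n)$ and $(3/8)^n$ for $R_2(n)$. The one minor difference is in bounding $I_m$: the paper uses the recursion~\eqref{eq:Im-recursion-uniform} together with $I_{m-1}\ge 0$ to obtain $I_m \le \tfrac{2^m(3/4)^{n-m+1}}{n-m+1}$, whereas your pointwise bound $\tfrac14-\tfrac12 z\le \tfrac34$ on $[-1,0]$ gives $I_m\le (3/4)^{n-m}\tfrac{2^m-1}{m}$ directly---arguably cleaner, since it avoids the recursion entirely and still produces the same $(7/8)^n$ estimate after the binomial summation.
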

\begin{proof}
In virtue of Proposition~\ref{prop:POWcase3aUniform}, it suffices to show the following:
    \begin{enumerate}[label=(\roman*)]
        \item $2^{-n}\sum_{m=1}^{n-1} {n \choose m}  \to 1$ as $n \to \infty$.
        \item $2^{-n}\sum_{m=1}^{n-1} {n \choose m}  m 2^{-m}    I_m \to 0 $ as $n \to \infty$.
        \item $2^{-n}\sum_{m=1}^{n-1} {n \choose m}   m 2^{-m}    \frac{2^{2(m-n)}}{2n-m}   \to 0 $ as $n \to \infty$. 
    \end{enumerate}
    In the proof, we  use the binomial identity $(x+y)^n=\sum_{j=0}^n {n \choose j}x^j y^{n-j}$ repeatedly.
    
    To prove (i), observe that
    \[
    2^{-n}\sum_{m=1}^{n-1} {n \choose m}  = 
    2^{-n}\left(\sum_{m=0}^{n} {n \choose m} - {n \choose 0} - {n \choose n} \right) = 1 - 2\times2^{-n} \to 1
    \]
    as $n\to\infty$.

    To prove (ii), we first note that $I_m\ge0$ as $m2^{-m}I_m$ is a probability (see equation~\eqref{eq:POWcase3a-uniform-Im}). From equation~\eqref{eq:Im-recursion-uniform}, we obtain
    \( I_m \le \frac{2^m(\frac34)^{n-m+1}}{n-m+1}  \). Then, we have
    \begin{equation*}
        \begin{split}
 2^{-n}\sum_{m=1}^{n-1} {n \choose m}  m 2^{-m} I_m \le &
 2^{-n}\sum_{m=1}^{n-1} \frac{n!}{(n-m)!m!}  m 2^{-m} \frac{2^m(\frac34)^{n-m+1}}{n-m+1} 
  \\
= & \left(\frac38\right)^{n} \ \sum_{m=1}^{n-1} \frac{n!}{(n-m+1)!(m-1)!}       \left(\frac43\right)^{m-1} \\
= &  \left(\frac38\right)^{n} \ \sum_{m=1}^{n-1} {n \choose m-1} \left(\frac43\right)^{m-1} 1^{n-m+1}
\\
\le & \left(\frac38\right)^{n}  \left(\frac43+1\right)^{n} = \left(\frac38\right)^{n}  \left(\frac73\right)^{n} = \left(\frac78\right)^{n} \to 0 
        \end{split}
    \end{equation*} as $n \to \infty$ (here,  the second inequality follows from the binomial identity).

To prove (iii), observe that
 \begin{equation*}
        \begin{split}
& 2^{-n}\sum_{m=1}^{n-1} {n \choose m}   m 2^{-m}    \frac{2^{2(m-n)}}{2n-m}   = 
2^{-3n}\sum_{m=1}^{n-1} {n \choose m}    2^{m}    \frac{m}{2n-m}  \\
& \le 2^{-3n}\sum_{m=1}^{n-1} {n \choose m}    2^{m}   = 2^{-3n}\sum_{m=1}^{n-1} {n \choose m}    2^{m}   1^{n-m} \le 2^{-3n} 3^n = 
\left(\frac38\right)^{n}  \to 0 
        \end{split}
    \end{equation*} as $n \to \infty$ (here, the first inequality follows since $\frac{m}{2n-m}\le1$ and the second inequality follows from the binomial identity).
\end{proof}

Our last objective function-based approximation result below gives an upper bound on the largest difference between the optimal objective function values when the same linear function is minimized over the nonconvex set $\cS^\kappa$ and the set $\cS_P^\kappa$. % (the feasible region of the \textbf{P} relaxation).   

\begin{proposition}\label{prop:Obound-stSimplex-kappaGeneral}
Let    $\cG=\Delta^n$ and $\kappa>1$. Then, we have
$$O_{\cS^\kappa,\cS^\kappa_P} (\alpha,\beta) \le (-\beta_{j'}) \left(\kappa^{\frac{1}{1-\kappa}}-\kappa^{\frac{\kappa}{1-\kappa}}  \right) , $$
where   $j'=\arg\min\{ (\alpha_j+\beta_j) : \beta_j \le 0 \}$.
\end{proposition}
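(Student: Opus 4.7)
The plan is to use Proposition~\ref{prop:stSimplex-whenPOWexact} to reduce the statement to the only nontrivial case and then exhibit an explicit feasible point of $\mathcal{S}^\kappa$ whose objective value is within the claimed amount of the \textbf{P}-relaxation optimum. In Cases 1, 2, and Subcase 3a of Proposition~\ref{prop:stSimplex-whenPOWexact}, the \textbf{P} relaxation is exact, so $O_{\mathcal{S}^\kappa,\mathcal{S}^\kappa_P}(\alpha,\beta)=0$, and the bound holds trivially since $-\beta_{j'}\ge 0$ and $\kappa^{1/(1-\kappa)}-\kappa^{\kappa/(1-\kappa)}=\kappa^{1/(1-\kappa)}(1-1/\kappa)\ge 0$ for $\kappa>1$. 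Likewise in Subcase 3b (where $\sum_{j\in \hat J}t_j\ge 1$ with $t_j:=((\mu-\alpha_j)/(\kappa\beta_j))^{1/(\kappa-1)}$) the relaxation is exact and the inequality is immediate.

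So I may assume that $J^+\neq\emptyset$, $J^-\neq\emptyset$, $\hat J\neq\emptyset$ and $s:=\sum_{j\in \hat J}t_j<1$. From the derivation in the proof of Proposition~\ref{prop:stSimplex-whenPOWexact}, the optimal value of the \textbf{P} relaxation is
\[
z_P(\alpha,\beta)=\mu+\sum_{j\in \hat J}\bigl[(\alpha_j-\mu)t_j+\beta_j t_j^{\kappa}\bigr].
\]
To upper bound $z(\alpha,\beta)$, I construct an explicit feasible point of $\mathcal{S}^\kappa$ by ``spending'' the leftover mass $1-s$ on the index $j'$: set $\tilde x_j=t_j$ for $j\in\hat J$, $\tilde x_{j'}=1-s$, $\tilde x_k=0$ otherwise, and $\tilde y_j=\tilde x_j^{\kappa}$. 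This satisfies $\tilde x\in\Delta^n$ and $\tilde y_j=\tilde x_j^\kappa$, so $(\tilde x,\tilde y)\in\mathcal{S}^\kappa$.

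Next I evaluate $\alpha^T\tilde x+\beta^T\tilde y$. Using $\alpha_{j'}=\mu-\beta_{j'}$, a direct rearrangement gives
\[
\alpha^T\tilde x+\beta^T\tilde y=\sum_{j\in \hat J}\bigl[(\alpha_j-\mu)t_j+\beta_j t_j^{\kappa}\bigr]+\mu+\beta_{j'}\bigl[(1-s)^{\kappa}-(1-s)\bigr]=z_P(\alpha,\beta)+(-\beta_{j'})\bigl[(1-s)-(1-s)^{\kappa}\bigr].
\]
Hence
\[
O_{\mathcal{S}^\kappa,\mathcal{S}^\kappa_P}(\alpha,\beta)=z(\alpha,\beta)-z_P(\alpha,\beta)\le (-\beta_{j'})\bigl[(1-s)-(1-s)^{\kappa}\bigr].
\]

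The last step is to maximize $u-u^{\kappa}$ over $u=1-s\in[0,1]$. Setting the derivative $1-\kappa u^{\kappa-1}$ to zero yields the unique critical point $u^{\ast}=\kappa^{1/(1-\kappa)}$, where the maximum value $\kappa^{1/(1-\kappa)}-\kappa^{\kappa/(1-\kappa)}$ is attained (the endpoints give $0$). Substituting this universal upper bound completes the proof. I do not expect a genuine obstacle here: the only nontrivial step is recognizing that the slack $1-s$ must be absorbed by $j'$ in order to preserve the simplex constraint, which transparently produces the factor $(-\beta_{j'})[u-u^\kappa]$; maximizing this one-variable expression then isolates the claimed constant.
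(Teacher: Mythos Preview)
Your proof is correct and follows essentially the same route as the paper: identify that in the \textbf{P}-relaxation optimum only the coordinate $x_{j'}$ among the indices with $\beta_j\le 0$ can be nonzero, lift the optimal $x$ to a feasible point of $\mathcal{S}^\kappa$ by setting $y=x^\kappa$, and bound the resulting objective difference $(-\beta_{j'})\bigl(x_{j'}-x_{j'}^\kappa\bigr)$ by the universal maximum of $u-u^\kappa$ on $[0,1]$. The paper argues this directly from Proposition~\ref{prop:optimizeOverPOW} without invoking the full case analysis of Proposition~\ref{prop:stSimplex-whenPOWexact}, but the substance is identical.
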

\begin{proof}
  Due to Proposition~\ref{prop:optimizeOverPOW},  problem $\min_{(x,y)\in \cS^\kappa_P}\{ \alpha^Tx+\beta^Ty\} $ is equivalent~to 
  \begin{equation*}
      \begin{split}
& \min_{x\in\RR^n_+} \left\{ \sum_{j:\beta_j>0} (\alpha_j x_j + \beta_j x_j^\kappa) + \sum_{j:\beta_j\le0} (\alpha_j+ \beta_j)  x_j : \
  \sum_{j:\beta_j>0} x_j +  \sum_{j:\beta_j\le0}  x_j = 1
\right \}  .
      \end{split}
  \end{equation*}
  Since the objective function is linear in $x_j$ variables with $\beta_j\le0$ and they have identical coefficients in the constraint, at most one of them, in particular, the one with the smallest $\alpha_j+\beta_j$ value, can take nonzero value in an optimal solution. Note that the largest possible error is calculated as $\max_{x\in[0,1]} \{ x-x^\kappa\}=\kappa^{\frac{1}{1-\kappa}}-\kappa^{\frac{\kappa}{1-\kappa}}$. Hence, the result follows.
\end{proof}
Notice that $\kappa^{\frac{1}{1-\kappa}}-\kappa^{\frac{\kappa}{1-\kappa}}$ converges to 0 and 1 as $\kappa\to1^+$ and $\kappa\to\infty$, respectively, and it is equal to $\frac14$ for $\kappa=2$.

\subsection{Valid inequalities obtained via RPT}\label{sec:rpt}

In this section, we use RPT~\cite{zhen2021extension,bertsimascone,dey2025second} to derive power cone representable valid inequalities for $\cS^\kappa$.  
Adapting the convention $\frac00=0$,  the specific application of this technique to our setting yields
\begin{eqnarray*}1 &=& \sum_{j=1}^nx_j
\Rightarrow x^{\kappa}_i = \sum_{j=1}^nx_jx^{\kappa}_i 
\Rightarrow x^{\kappa}_i =\sum_{j=1}^n\frac{x^{\kappa}_jx^{\kappa}_i}{x^{\kappa -1}_j} 
\Rightarrow y_i 
= \sum_{j=1}^n\frac{X^{\kappa}_{ij}}{x^{\kappa -1}_j} 
\Rightarrow y_i 
\geq \sum_{j=1}^n\frac{X^{\kappa}_{ij}}{x^{\kappa -1}_j},
\end{eqnarray*}
where we claim that the last inequality is power cone representable for $i=1,\dots,n$.

To formalize the above derivation, let us first define the RPT set  $\cS_V^{\kappa}  = \{ (x,y)\in\Delta^n\times\RR^n : \exists X \in \mathbb{S}^n, w \in \mathbb{R}^{m \times n} :   \eqref{eq:rpt} 
 \}$, where
\begin{equation}\label{eq:rpt}
\begin{split} 
X \ge0, w \ge 0,  \  y_i \ge \sum_{j=1}^n w_{ij}, i = 1,\dots,n , \ 
         (&w_{ij},x_{j},X_{ij})\in\cC^{1/\kappa},  i,j = 1,\dots,n .
\end{split}
\end{equation}
We now show that the RPT set is an outer-approximation of the nonconvex set $\cS^\kappa$, which is given for completeness below.
\begin{proposition}\label{prop:validineqRPT}
For $\cG=\Delta^n$, we have $\cS^\kappa \subseteq \cS^{\kappa}_V$.
\end{proposition}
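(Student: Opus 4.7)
The plan is to prove the containment by direct construction: given any $(x,y)\in\cS^\kappa$, I will exhibit matrices $X$ and $w$ certifying membership in $\cS^\kappa_V$. The natural choice, motivated by the RPT derivation shown just before the proposition, is to mimic the rank-one lifting $X=xx^T$ and then define $w_{ij}$ so that the final inequality in the derivation becomes an equality. Concretely, I would set $X_{ij}:=x_ix_j$ and $w_{ij}:=x_jx_i^\kappa$ for all $i,j=1,\dots,n$ (using the convention $0/0=0$ whenever it arises implicitly).

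With these choices, the nonnegativity conditions $X\ge 0$ and $w\ge 0$ follow immediately from $x\in\Delta^n\subseteq\RR^n_+$. For the summation constraint, since $\sum_{j=1}^nx_j=1$, we get
\[
\sum_{j=1}^n w_{ij}=x_i^\kappa\sum_{j=1}^n x_j=x_i^\kappa=y_i,
\]
so in fact $y_i=\sum_{j=1}^n w_{ij}$. It remains to verify the power-cone membership $(w_{ij},x_j,X_{ij})\in\cC^{1/\kappa}$, which unwinds to the inequality $w_{ij}^{1/\kappa}\,x_j^{1-1/\kappa}\ge |X_{ij}|$. Plugging in the definitions,
\[
w_{ij}^{1/\kappa}\,x_j^{\,1-1/\kappa}=\bigl(x_j x_i^\kappa\bigr)^{1/\kappa}x_j^{\,1-1/\kappa}=x_i\,x_j^{1/\kappa}\,x_j^{\,1-1/\kappa}=x_ix_j=X_{ij},
\]
so the power-cone constraint holds with equality. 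Hence $(x,y,X,w)$ satisfies all of \eqref{eq:rpt}, proving $(x,y)\in\cS^\kappa_V$.

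There is essentially no technical obstacle here; the only subtlety is handling indices $j$ with $x_j=0$. In that case $w_{ij}=0$ and $X_{ij}=0$ automatically, and the power cone condition reduces to $0\ge 0$, consistent with the $\tfrac{0}{0}=0$ convention used in the derivation. Thus the construction is valid across the whole simplex and the inclusion $\cS^\kappa\subseteq\cS^\kappa_V$ follows.
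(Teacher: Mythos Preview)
Your proof is correct and essentially identical to the paper's: both set $X_{ij}=x_ix_j$ and take $w_{ij}$ so that the RPT inequality holds with equality, with the only cosmetic difference being that you write $w_{ij}=x_jx_i^\kappa$ directly while the paper writes it as $X_{ij}^\kappa/x_j^{\kappa-1}$ (which simplifies to the same thing) and handles $x_j=0$ by a separate case. Your formulation even makes the $x_j=0$ case automatic, so nothing is missing.
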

\begin{proof}
Let $(x,y)\in\cS^\kappa$. For $i,j=1,\dots,n$, define 
$$X_{ij}=x_ix_j \text{ and } w_{ij}= \begin{cases}
\frac{X_{ij}^\kappa}{x_j^{\kappa-1}} & \text{ if }   x_j > 0 \\
0  & \text{ if }   x_j = 0 
\end{cases} .
$$  
We will now show that  constraints in the definition of the set $\cS_V^{\kappa}$ are satisfied. Firstly, the constraint $y_i \ge \sum_{j=1}^n w_{ij}$ is satisfied since we have
\[
y_i = x_i^\kappa = x_i^\kappa \sum_{j=1}^n x_j = x_i^\kappa \sum_{j=1 : x_j>0}^n \frac{x_j^\kappa}{x_j^{\kappa-1}} =  \sum_{j=1 : x_j>0}^n \frac{X_{ij}^\kappa}{x_j^{\kappa-1}} = \sum_{j=1 : x_j>0}^n w_{ij} = \sum_{j=1 }^n w_{ij}.
\]
Secondly, we consider the constraint $(w_{ij},x_{j},X_{ij})\in\cC^{1/\kappa}$, which is trivially satisfied if $x_i=0$ or $x_{j}=0$. Now, let us assume that $x_i>0$ and $x_j>0$, and observe that % we have 
\[
w_{ij}^{1/\kappa} x_j^{1-1/\kappa} = \bigg(\frac{X_{ij}^\kappa}{x_j^{\kappa-1}}\bigg)^{1/\kappa}x_j^{1-1/\kappa} =  \frac{X_{ij}}{x_j^{1-1/\kappa}} x_j^{1-1/\kappa} = X_{ij} .
\]
Hence, we prove that $(x,y) \in \cS^{\kappa}_V$ and the result follows. 
\end{proof}

%\begin{remark}
%*** POW+RPT = POW ***
%\end{remark}
{%\color{red}
\begin{remark}
    Although Proposition~\ref{prop:validineqRPT} is given for $\cG=\Delta^n$, it can easily be extended to a more general ground set $\cG=\{x\in[0,1]^n : Ax = b\}$ with $A \in  \mathbb{R}_+^{m\times n} $ and $b \in \mathbb{R}^m_+$. In this case, the RPT set is obtained as $\cS_V^{\kappa}  = \{ (x,y)\in\cG \times\RR^n : \exists X \in \mathbb{S}^n, w \in \mathbb{R}^{m \times n} :   \eqref{eq:rptGenRmk} 
 \}$, where
\begin{equation}\label{eq:rptGenRmk}
\begin{split} 
X \ge0, w \ge 0,  \ &b_\iota y_i \ge \sum_{j=1}^n A_{\iota j} w_{ij}, \iota=1,\dots,m, i = 1,\dots,n , \\ 
         (&w_{ij},x_{j},X_{ij})\in\cC^{1/\kappa},  i,j = 1,\dots,n .
\end{split}
\end{equation}
\end{remark}

}

\section{Computational Experiments}
\label{sec:computation}

In this section, we present the results of our computational experiments, where the ground set $\cG$ is chosen as the standard simplex~$\Delta^n$.

\subsection{Computational setting}
\label{sec:comp-setting}

We use a 64-bit workstation with two Intel(R) Xeon(R) Gold 6248R CPU (3.00GHz) processors (256 GB  RAM) and the Python programming language in our computational study. We utilize BARON 25.7.29 \cite{baronzhang2024solving}  to solve the nonconvex problems via Pyomo  6.4.0,  and MOSEK 10.0.40 to solve the conic programming relaxations via CVXPY 1.5.2
with the default settings.

% BARON version 25.7.29. Built: WIN-64 2025-07-29 09:36:39

% C:\Users\BK-FENS-3>python -m pip show cvxpy Name: cvxpy Version: 1.5.2

% C:\Users\BK-FENS-3>python -m pip show pyomo Name: Pyomo Version: 6.4.0

% C:\Users\BK-FENS-3>python -m pip show mosek Name: Mosek Version: 10.0.40

We compare the strength of the following nine relaxations against the nonconvex program defined over the set $\cS^\kappa$, which we will refer to as the \textbf{NON} model: 
\begin{itemize}
    \item The \textbf{P} relaxation, defined over  $ \cS^\kappa_P$
    \item The \textbf{PR} relaxation, defined over  $ \cS^\kappa_{P,R}$ %$:= \cS^\kappa_P \cap \cS_R \cap \cS_T^\kappa  $
    \item The \textbf{PRs} relaxation, defined over $ \cS^\kappa_{P,R,s}$%$:= \cS^\kappa_P \cap \cS_R \cap \cS_s \cap \cS_T^\kappa  $
    \item The \textbf{PRs$^3$} relaxation, defined over  
    $ \cS^\kappa_{P,s^3}:= \{ (x,y)\in\cS^\kappa_P : \exists X\in\mathbb{S}^n : 
    \eqref{eq:rlt-stSimplex},
    \eqref{eq:s3by3}, \eqref{eq:transformation}\} $, where   
    \begin{equation}\label{eq:s3by3}
    \begin{split}
       X \ge 0,  \
      \begin{bmatrix}
          X_{ii} & X_{ij} & x_i \\
          X_{ij} & X_{jj} & x_j \\
          x_i & x_j & 1
      \end{bmatrix} \succeq 0
        ,  1\le i < j \le n 
         . 
        \end{split}
    \end{equation}
    Note that this relaxation   only requires a subset of all $3\times3$ principal minors to be positive semidefinite.
    \item The \textbf{PRS} relaxation, defined over $ \cS^\kappa_{P,R,S}$%$:= \cS^\kappa_P \cap \cS_R \cap \cS_S \cap \cS_T^\kappa   $
    \item The \textbf{PRV} relaxation, obtained by adding RPT constraints~\eqref{eq:rpt} to the \textbf{PR} relaxation %: $ \cS^\kappa_{P,R,V}:= \cS^\kappa_P \cap \cS_R \cap \cS_T^\kappa \cup \cS_V^\kappa $
    \item The \textbf{PRsV} relaxation, obtained by adding RPT constraints~\eqref{eq:rpt} to the \textbf{PRs} relaxation%: $ \cS^\kappa_{P,R,s,V}:= \cS^\kappa_P \cap \cS_R \cap \cS_s \cap \cS_T^\kappa \cup \cS_V^\kappa $
    \item The \textbf{PRs$^3$V} relaxation, obtained by adding RPT constraints~\eqref{eq:rpt} to the \textbf{PRs$^3$} relaxation%: $ \cS^\kappa_{P,R,s^3,V}:= \cS^\kappa_P \cap \cS_R \cap \cS_{s^3} \cap \cS_T^\kappa \cup \cS_V^\kappa $
    \item The \textbf{PRSV} relaxation, obtained by adding RPT constraints~\eqref{eq:rpt} to the \textbf{PRS} relaxation%: $ \cS^\kappa_{P,R,S,V}:= \cS^\kappa_P \cap \cS_R \cap \cS_S \cap \cS_T^\kappa \cup \cS_V^\kappa $
\end{itemize}
% Let us call each of the optimization modeled solved as a method and denote the set of methods as $\$

We run an extensive set of experiments parametrized by three key components:
\begin{itemize}
    \item The distribution of objective function coefficients $\cD$: We choose the objective function coefficients randomly with respect to two different distributions: i) $\textup{Unif}(-1, 1)$, ii) standard normal.
    \item The value of exponent $\kappa$: 
    We choose six different $\kappa$ values given by the following set: $\{1.25, 1.5, 1.75, 2, 2.5, 3\}$.
    \item The value of dimension $n$: 
    We choose nine different $n$ values given by the following set: $\{2,3,\dots,10\}$.
\end{itemize}
We will call a triplet of $(\cD,\kappa, n)$ a \textit{setting} (notice that we have $2\times6\times9=108$ settings in total).
For each setting given by the triplet $(\cD,\kappa, n)$, we repeat the experiment 1000 times, and solve the \textbf{NON} model by BARON and the nine relaxations listed above by MOSEK. Therefore, in total, 108000 instances are created  and  each instance is solved ten times. MOSEK has given an \texttt{UNKNOWN} status for four instances for at least one relaxation, which are excluded from the analysis below.

We record the objective function value of 
each optimization problem solved as $z^{\text{\textbf{M}}}_{\cD,\kappa, n,r}$ and the times it takes to solve it as $t^{\text{\textbf{M}}}_{\cD,\kappa, n,r}$, where 
  $r$ stands for the replication index in the setting $(\cD,\kappa, n)$ and \textbf{M} is the model solved (either the \textbf{NON} model or any of the nine relaxations listed above). Then, for each instance and model, we     compute the \textit{absolute dual gap} as $z^{\text{\textbf{NON}}}_{\cD,\kappa, n,r} - z^{\text{\textbf{M}}}_{\cD,\kappa, n,r}$, and define the cumulative absolute gap of a setting $(\cD,\kappa, n)$ for model \textbf{M} as 
\[
\text{Cumulative Gap}^{\text{\textbf{M}}}_{\cD,\kappa, n} = \sum_{r=1}^{1000} \left(z^{\text{\textbf{NON}}}_{\cD,\kappa, n,r} - z^{\text{\textbf{M}}}_{\cD,\kappa, n,r}\right).
\]
Note that, by construction, $\text{Cumulative Gap}^{\text{\textbf{NON}}}_{\cD,\kappa, n}=0$ for every setting.
The cumulative time of a setting $(\cD,\kappa, n)$ for model \textbf{M} is computed as  \[
\text{Cumulative Time}^{\text{\textbf{M}}}_{\cD,\kappa, n} = \sum_{r=1}^{1000} t^{\text{\textbf{M}}}_{\cD,\kappa, n,r}. 
\]
Finally, we will say that a relaxation is \textit{exact} for an instance if the gap is less than $10^{-4}$ and we record the number of times a model \textbf{M} is exact in the setting $(\cD,\kappa, n)$~as
 \[
\text{\#Exact}^{\text{\textbf{M}}}_{\cD,\kappa, n} = \sum_{r=1}^{1000} {\text{\textbf{1}}} \left( z^{\text{\textbf{NON}}}_{\cD,\kappa, n,r} - z^{\text{\textbf{M}}}_{\cD,\kappa, n,r}  \le 10^{-4}\right) , 
\]
where \textbf{1}($\cdot$) is the indicator function. These three metrics, $\text{Cumulative Gap}^{\text{\textbf{M}}}_{\cD,\kappa, n} $, $\text{Cumulative Time}^{\text{\textbf{M}}}_{\cD,\kappa, n} $ and $\text{\#Exact}^{\text{\textbf{M}}}_{\cD,\kappa, n} $, will be our main performance criteria in the following discussion.

\subsection{Results}
\label{sec:comp-results}

We now provide the details of our computational study. We note that each statistic given below is  averaged over the settings considered.

\subsubsection{Aggregate Results}
\label{sec:comp-results-aggregate}
We first provide the most aggregate results when all the 108 settings are considered together in Figure~\ref{fig:gapTime-all}. We observe that the \textbf{P} relaxation, which is the weakest one, can solve 888 instances on the average and the average cumulative  gap is 3.607 while it only takes 34.429 seconds to solve. The addition of the RLT constraints  has the largest marginal effect, increasing the average number of exact instances to 924 and reducing the average cumulative gap to 1.160 in the \textbf{PR} relaxation by the increase of a mere 4 seconds. The \textbf{PRS} relaxation~further increases the average number of exact instances to 941 and reduces the average cumulative gap to 0.672 while only taking 6 seconds longer than the \textbf{PR} relaxation. Interestingly, the average cumulative gaps reported by the  \textbf{PRs} and   \textbf{PRs$^3$}   relaxations  are very close to the \textbf{PRS} relaxation, albeit the computational cost of these weaker relaxations are relatively high.
The addition of the RPT constraints~\eqref{eq:rpt}  further improves the strength of the relaxations. Relaxations \textbf{PRV} and  \textbf{PRSV} are almost indistinguishable as the number of exact instances (959 vs. 960), the gaps (0.405 vs. 0.398) and the times (121.177	vs. 129.492) are all very close to each other.  We again observe that weaker relaxations \textbf{PRsV} and   \textbf{PRs$^3$V} take more time than the \textbf{PRSV} relaxation.

 {The comparison of   \textbf{PRS} and  \textbf{PRV} relaxations leads to interesting observations. As proven in Theorem~\ref{thm:exactness-kappa=2-stSimplexn=2}, both relaxations are exact when $n=2$ and  $\kappa=2$. The \textbf{PRS} relaxation is also exact for $n\ge3$ and $\kappa=2$ in our experiments whereas the average cumulative gap of the  \textbf{PRV} relaxation is 0.023 and the average number of exact instances is 992 across these 16 settings. In the remaining 90 settings where $\kappa\neq2$, the \textbf{PRV} relaxation is more successful with an average cumulative gap of 0.482 and average number of exact instances of 952 whereas   the \textbf{PRS} relaxation   has an average cumulative gap of 0.806 and average number of exact instances of  929. We note that these relaxations are not comparable in general as there exist instances in which the bound given by the   \textbf{PRS} relaxation is stronger than that of the  \textbf{PRV}  relaxation and vice versa.} 
%  is better when $n\ge 3$ and $\kappa=2$  i. Say something like, ``on average the RPT inequality typically appear to significantly dominate the SDP relaxation, although in the case of degree 2, we observe instances where \textbf{PRS} is tight and  \textbf{PRV} is not tight.
% \\
% **
% Out of 108 settings: i) PRS=PRV when $n=2$ and $\kappa=2$ (two settings). ii) PRS is better when $n\ge 3$ and $\kappa=2$ (16 settings, exact: 1000 vs. 992, gap: 0 vs. 0.023 on avg). iii) PRV is better when  $\kappa\neq2$ (90 settings, exact: 952 vs. 929, gap: 0.482 vs. 0.806 on avg)
% **
%

To summarize, we can say that in circumstances in which the time budget is limited,  \textbf{PR}  and  \textbf{PRS} relaxations offer reasonable alternatives to the \textbf{NON} model solved by BARON. When more computational budget is available, the addition of RPT constraints~\eqref{eq:rpt} can further improve the relaxation quality, {with the \textbf{PRV} relaxation in particular being an effective option}. 
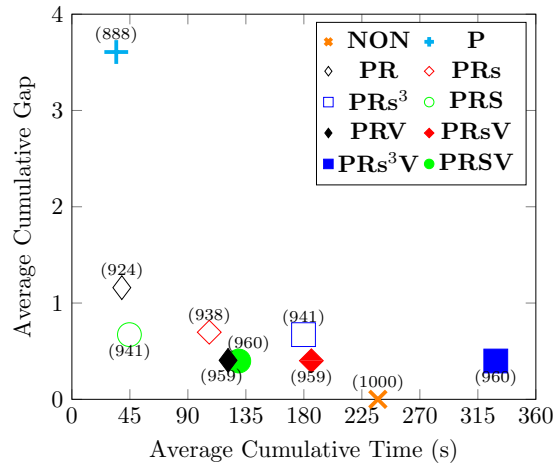
\begin{figure}[H]
	\caption{Average Cumulative Gap vs. Average Cumulative Time with respect to all settings considered. The average number of exact instances is given in parenthesis.}
	\label{fig:gapTime-all}
	\centering
		\begin{tikzpicture}[scale=0.9]
			\begin{axis}[
				xmin=0, 
				xmax=360,
                xtick={0,45,...,360},
				%xmode=log,
				ymin=0, 
				ymax=4.0,
			    xlabel={ Average Cumulative Time (s)}, 
			      ylabel={ Average Cumulative Gap},
				% label style={font=\bfseries\boldmath},
				% tick label style={font=\bfseries\boldmath},
				scatter/classes={
					NON={scale=2.5,mark=x,  color=orange, ultra thick}, 
					POW={scale=2.5,mark=+,  color=cyan, ultra thick}, 
					PR={scale=2.5,mark=diamond,  color=black}, 
					PRSOC={scale=2.5,mark=halfsquare*, color=red, mark color=white, fill=white}, 
					PRS3={scale=2.5,mark=square, color=blue}, 
					PRSDP={scale=2.5,mark=o,  color=green}, 
					PRv={scale=2.5,mark=diamond*,   color=black}, 
					PRSOCv={scale=2.5,mark=halfsquare*, color=red, mark color=red, fill=red}, 
					PRS3v={scale=2.5,mark=square*,   color=blue},
					PRSDPv={scale=2.5,mark=*,  color=green}
				},
           legend entries={\textbf{NON}, \textbf{P}, \textbf{PR}, \textbf{PRs}, \textbf{PRs$^3$}, \textbf{PRS}, \textbf{PRV}, \textbf{PRsV}, \textbf{PRs$^3$V}, \textbf{PRSV}},
                    legend style={
        legend columns=2},
				] 
				\addplot[
				scatter, 
				only marks,
				scatter src=explicit symbolic,
				nodes near coords*={\annotvalue},
				node near coord style={ anchor=\anchorvalue, font=\scriptsize, color=black},
				visualization depends on={value \thisrow{annotation} \as \annotvalue },
				visualization depends on={  value \thisrow{anchor} \as \anchorvalue },
				]
				table[meta=label] {
					x       y            label  annotation  anchor
237.429	0	NON	(1000)	south 
34.429	3.607	POW	(888)	south west
38.751	1.160	PR	(924)	south east
106.599	0.697	PRSOC	(938)	south east
179.725	0.672	PRS3	(941)	south east
44.638	0.672	PRSDP	(941)	north
129.492	0.398	PRSDPv	{ \ \ (960)}	south
121.177	0.405	PRv	{(959) \ \ }	north
185.816	0.401	PRSOCv	(959)	north
328.910	0.398	PRS3v	(960)	north
				};
			\end{axis}
			
		\end{tikzpicture}
    \end{figure}

We again note that the strengths of \textbf{PRs} and \textbf{PRs$^3$} relaxations are very close to the \textbf{PRS} relaxation, and this observation remains valid with the addition of the RPT constraints~\eqref{eq:rpt}. 
Related to the \textbf{PRs$^3$} relaxation,  we have 
the following conjecture:
\begin{cnj}\label{conj:PRS=PRs3}
    We have  $\cS^\kappa_{P,R,s^3} = \cS^\kappa_{P,R,S} $.  
\end{cnj}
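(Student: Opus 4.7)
The plan is to reformulate the conjecture via Schur complement and reduce it to a matrix-completion question about PSD matrices with zero row sums. Fix any $(x,y) \in \cS^\kappa_{P,R,s^3}$ with certifying $X$, and set $M := \begin{bmatrix} X & x \\ x^T & 1 \end{bmatrix}$ and $Y := X - xx^T$. Since the $(n{+}1,n{+}1)$ entry of $M$ is $1$, Schur complement gives $M \succeq 0$ iff $Y \succeq 0$, and each $3\times 3$ principal minor in~\eqref{eq:s3by3} is PSD iff the corresponding $2\times 2$ submatrix of $Y$ is PSD, i.e., $Y_{ii},Y_{jj}\ge 0$ and $Y_{ii}Y_{jj}\ge Y_{ij}^2$. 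Finally, the RLT equality $Xe=x$ combined with $e^T x=1$ forces $Ye=0$. The conjecture thus reduces to the following claim: whenever symmetric $Y$ satisfies $Ye=0$, $Y+xx^T \ge 0$ entry-wise, and all $2\times 2$ principal minors PSD, there exists symmetric $Y'$ with the same diagonal, $Y'e=0$, $Y'+xx^T \ge 0$, and $Y' \succeq 0$ (so that $X':=Y'+xx^T$ is a valid PSD lift preserving $(x,y)$; for $\kappa\neq 2$ there is additional slack coming from~\eqref{eq:transformation}).

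For the base case $n=3$, no modification is needed: $Ye=0$ forces $\det Y=0$, the trace is nonneg, and the sum of $2\times 2$ principal minors is nonneg by hypothesis, so Vieta's formulas applied to $\det(\lambda I - Y)=\lambda^3-c_1\lambda^2+c_2\lambda$ force both nonzero eigenvalues to be nonneg and $Y$ is itself PSD. Combined with Corollary~\ref{cor:CPP}(i), this settles the conjecture for $n\le 3$ and $\kappa=2$. For larger $n$, a useful identity is that every symmetric $Y$ with $Ye=0$ admits the Laplacian-style expansion
\[
Y \;=\; -\sum_{i<j} Y_{ij}(e_i-e_j)(e_i-e_j)^T,\qquad v^T Y v \;=\; -\sum_{i<j} Y_{ij}(v_i-v_j)^2,
\]
so $Y$ itself is PSD whenever all off-diagonal entries $Y_{ij}$ are non-positive.

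For $n\ge 4$, the direct conclusion can fail: a concrete $4\times 4$ counterexample (obtained, e.g., by scaling the matrix with diagonal $(1,1,2,4)$ and off-diagonals $Y_{12}=1$, $Y_{13}=0$, $Y_{14}=-2$, $Y_{23}=Y_{24}=Y_{34}=-1$) satisfies $Ye=0$ and every $2\times 2$ minor condition yet has a strictly negative eigenvalue, so the modification $Y' \ne Y$ is essential. My plan has two steps. First, parametrizing $Y' = \sum_k \lambda_k u_k u_k^T$ with $\lambda_k \ge 0$ and $u_k \in e^\perp$, I would show that every nonneg diagonal vector is realizable by some such PSD $Y'$; an explicit construction using $u_k = e_k - \tfrac{1}{n}e$ reduces this to solving a small triangular linear system in the $\lambda_k$'s with nonneg solutions whenever $n\ge 3$. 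Second, and this is the main obstacle, one must select a realization that additionally satisfies the entry-wise bound $Y'_{ij} \ge -x_i x_j$. The two ingredients to combine are the $2\times 2$ minor inequalities $|Y_{ij}| \le \sqrt{Y_{ii}Y_{jj}}$, which constrain the ``mass'' on each off-diagonal, and the entry-wise bound $Y_{ij} \ge -x_i x_j$, which relates the scale of $x$ to the room available for a PSD perturbation. I expect this last step to require a convex-geometric / Farkas-type argument: infeasibility of the PSD completion under the entry-wise lower bound should lead, via duality, to a contradiction with the existence of the original $Y$ satisfying the $2\times 2$ minor conditions.
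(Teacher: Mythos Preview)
The paper does not prove Conjecture~\ref{conj:PRS=PRs3}; it is stated as an open problem, motivated only by the computational experiments in Section~\ref{sec:comp-results-aggregate}. There is therefore no paper proof to compare against, and your attempt goes beyond what the paper establishes.

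Your Schur-complement reduction is correct, and the $n\le 3$ case is genuinely settled by your argument: when $n=3$, $Ye=0$ forces $\det Y=0$, and together with the nonnegative trace and nonnegative sum of $2\times 2$ principal minors this makes $Y$ itself positive semidefinite, so the \emph{same} lift $X$ already certifies membership in $\cS^\kappa_{P,R,S}$. This holds for every $\kappa>1$, not only $\kappa=2$, and is a (small) advance over what the paper states.

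For $n\ge 4$, however, the proposal is not a proof but a plan with an explicitly acknowledged gap. A minor issue first: your stated construction with $u_k=e_k-\tfrac{1}{n}e$ gives $\lambda_j=\tfrac{n}{n-2}\bigl(d_j-\tfrac{1}{n(n-1)}\sum_k d_k\bigr)$, which can be negative when one $d_j$ is small, so it does not quite prove Step~1. (A cleaner route: $Y'\succeq 0$ with $Y'e=0$ and prescribed diagonal $d$ amounts to finding vectors $b_j$ with $\|b_j\|=\sqrt{d_j}$ and $\sum_j b_j=0$; the triangle-inequality condition $\sqrt{d_i}\le\sum_{j\ne i}\sqrt{d_j}$ for this \emph{does} follow from the $2\times2$ minor hypothesis on $Y$ combined with $Ye=0$.) The real obstacle is Step~2: nothing you have written controls the entrywise constraint $Y'_{ij}\ge -x_ix_j$ simultaneously with $Y'\succeq 0$, and the proposed ``Farkas-type argument'' is a hope, not an argument. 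The conjecture remains open for $n\ge 4$, both in the paper and in your proposal.
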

Although it might be interesting to pursue   exploring this observation in the future theoretically, from a computational standpoint,  \textbf{PRs}, \textbf{PRs$^3$}, \textbf{PRsV} and \textbf{PRs$^3$V} relaxations are simply too expensive alternatives and dominated by stronger relaxations in terms of  computational effort. Therefore, we omit them from the discussion below.

%
%
%%%%%%%%%%%%%%%%%%%%%%%%%%%%%%%%%%%%%%%%%%%%%%%%%%%%%%%%
%
%

% % Table generated by Excel2LaTeX from sheet 'Sheet1'
% \begin{table}[H]
% 	\centering
% 	\caption{Table of parameter settings for the heuristic approach. The markers are used in Figures~\ref{fig:deltaGap} and \ref{fig:lpGap}.}
% 	\label{tab:param-setting-heuristic}
% % Table generated by Excel2LaTeX from sheet 'Sheet6'
% \begin{tabular}{|c|c|}

%     Method &     Marker \\
% \hline
% {Nonconvex} &  \begin{tikzpicture}[scale=0.01]
% 				\begin{axis}   
% 					[xmin=-1,xmax=1,ymin = -1,ymax = 1,xtick={},ytick={}] 
% 					\addplot[scale=2.5,mark size=500pt, mark=x,   color=orange, ultra thick] coordinates {(0,0)};
% 				\end{axis}
% 		\end{tikzpicture} \\
%  {POW} &            \\

% {POW+RLT} &            \\

% POW+RLT+SOC &            \\

% POW+RLT+SDP3 &            \\

% {POW+RLT+SDP} &            \\

% {POW+RLT+validP} &            \\

% POW+RLT+SOC+validP &            \\

% POW+RLT+SDP3+validP &            \\

% {POW+RLT+SDP+validP} &            \\

% \end{tabular}  
% \end{table}

\newcommand\labelcolor{white}

\subsubsection{The effect of distribution $\cD$}
\label{sec:comp-results-distribution}

We now analyze the effect of the distribution of objective function coefficients $\cD$ on the results, which are reported 
in Figure~\ref{fig:gapTime-objective}. Although  $\text{Unif}(-1,1)$ and standard normal both have mean zero, their variances are 1 and $1/6$, respectively. This is reflected on the outcomes as the average cumulative gap values for the standard normal are higher than those of the uniform distribution. We also note that these observations are consistent with the simulation results that we have conducted in Section~\ref{sec:objFunc-approx} (see Tables~\ref{tab:simulateUniform} and~\ref{tab:simulateNormal}).

\begin{figure}[H]
	\caption{Average Cumulative Gap vs. Average Cumulative Time with respect to different distributions for the objective function coefficients.}
	\label{fig:gapTime-objective}
	\centering
		\begin{subfigure}{.35\textwidth}
		\begin{tikzpicture}[scale=0.53]
			\begin{axis}[
				title={\large{Uniform}},
				xmin=0, 
				xmax=270, xtick={0,45,...,270},
                xtick={0,45,...,270},
				%xmode=log,
				ymin=0, 
				ymax=5,
				    xlabel={\large Average Cumulative Time (s)}, 
				ylabel={\large Average Cumulative Gap},
				% label style={font=\bfseries\boldmath},
				% tick label style={font=\bfseries\boldmath},
				scatter/classes={
					NON={scale=2.5,mark=x,  color=orange, ultra thick}, 
					POW={scale=2.5,mark=+,  color=cyan, ultra thick}, 
					PR={scale=2.5,mark=diamond,  color=black}, 
		%			PRSOC={scale=2.5,mark=halfsquare*, color=red, mark color=white, fill=white}, 
		%			PRS3={scale=2.5,mark=square, color=blue}, 
					PRSDP={scale=2.5,mark=o,  color=green}, 
					PRv={scale=2.5,mark=diamond*,   color=black}, 
		%			PRSOCv={scale=2.5,mark=halfsquare*, color=red, mark color=red, fill=red}, 
		%			PRS3v={scale=2.5,mark=square*,   color=blue},
					PRSDPv={scale=2.5,mark=*,  color=green}
				}
				] 
				\addplot[
				scatter, 
				only marks,
				scatter src=explicit symbolic,
				nodes near coords*={\annotvalue},
				node near coord style={ anchor=\anchorvalue, font=\scriptsize, color=\labelcolor},
				visualization depends on={value (\thisrow{annotation}) \as \annotvalue },
				visualization depends on={  value \thisrow{anchor} \as \anchorvalue },
				]
				table[meta=label] {
					x       y            label  annotation  anchor
237.436	0	NON	1000	south east
34.470	2.522	POW	894	north east
39.011	0.872	PR	927	south east
%105.370	0.492	PRSOC	944	south east
%178.810	0.477	PRS3	946	south east
44.832	0.477	PRSDP	946	north east
131.310	0.275	PRSDPv	964	south east
120.327	0.280	PRv	963	south east
%186.331	0.276	PRSOCv	964	south east
%330.180	0.275	PRS3v	964	south east
				};
			\end{axis}
			
		\end{tikzpicture}
	\end{subfigure}
    \begin{subfigure}{.35\textwidth}
		\begin{tikzpicture}[scale=0.53]
			\begin{axis}[
				title={\large{Normal}},
				xmin=0, 
				xmax=270, xtick={0,45,...,270},
                xtick={0,45,...,270},
				%xmode=log,
				ymin=0, 
				ymax=5,
				   xlabel={\large Average Cumulative Time (s)}, 
				%ylabel={\large Average Cumulative Gap},
				% label style={font=\bfseries\boldmath},
				% tick label style={font=\bfseries\boldmath},
				scatter/classes={
					NON={scale=2.5,mark=x,  color=orange, ultra thick}, 
					POW={scale=2.5,mark=+,  color=cyan, ultra thick}, 
					PR={scale=2.5,mark=diamond,  color=black}, 
				%	PRSOC={scale=2.5,mark=halfsquare*, color=red, mark color=white, fill=white}, 
				%	PRS3={scale=2.5,mark=square, color=blue}, 
					PRSDP={scale=2.5,mark=o,  color=green}, 
					PRv={scale=2.5,mark=diamond*,   color=black}, 
				%	PRSOCv={scale=2.5,mark=halfsquare*, color=red, mark color=red, fill=red}, 
				%	PRS3v={scale=2.5,mark=square*,   color=blue},
					PRSDPv={scale=2.5,mark=*,  color=green}
				},
           legend entries={\textbf{NON}, \textbf{P}, \textbf{PR},  \textbf{PRS}, \textbf{PRV},   \textbf{PRSV}},
                    legend style={
        legend columns=2},
				] 
				\addplot[
				scatter, 
				only marks,
				scatter src=explicit symbolic,
				nodes near coords*={\annotvalue},
				node near coord style={ anchor=\anchorvalue, font=\scriptsize, color=\labelcolor},
				visualization depends on={value (\thisrow{annotation}) \as \annotvalue },
				visualization depends on={  value \thisrow{anchor} \as \anchorvalue },
				]
				table[meta=label] {
					x       y            label  annotation  anchor
237.423	0	NON	1000	south east
34.387	4.692	POW	882	north east
38.491	1.449	PR	920	south east
% 107.827	0.902	PRSOC	933	south east
% 180.641	0.867	PRS3	936	south east
44.445	0.867	PRSDP	936	south east
127.675	0.522	PRSDPv	956	south east
122.028	0.530	PRv	954	south east
% 185.302	0.525	PRSOCv	955	south east
% 327.639	0.522	PRS3v	956	south east
				};
			\end{axis}
			
		\end{tikzpicture}
	\end{subfigure}
    \end{figure}
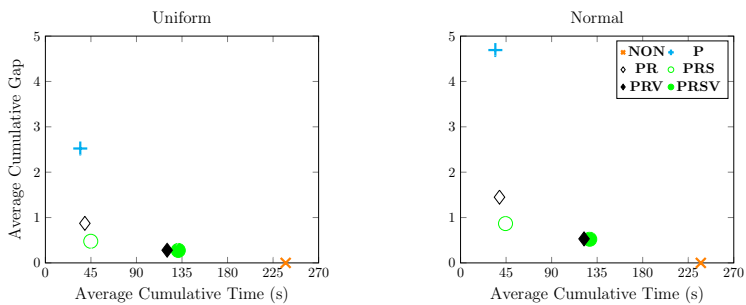

In terms of computational cost, the choice of the distribution does not seem to make a significant difference. % a very similar trend  a significant difference or a consistent trend for different values of $\kappa$.

%
%
%%%%%%%%%%%%%%%%%%%%%%%%%%%%%%%%%%%%%%%%%%%%
%
%

\subsubsection{The effect of exponent $\kappa$}
\label{sec:comp-results-exponent}

We next analyze the effect of  exponent $\kappa$ on the results, which are reported 
in Figure~\ref{fig:gapTime-kappa}. We observe that the average cumulative gap of the  \textbf{P} relaxation increases with $\kappa$, which is an expected outcome due to Proposition~\ref{prop:Obound-stSimplex-kappaGeneral}. Interestingly, the average cumulative gap of other relaxations first increases, then decreases with respect to $\kappa$. {As stated in Conjecture~\ref{conj:exactness-kappa=2-stSimplexn-any}, the \textbf{PRS} relaxation is exact when $\kappa=2$ in all  our experiments.} We also note that all relaxations are quite strong around $\kappa=2$, which suggests that the effect of the RLT constraints is most influential around this value.

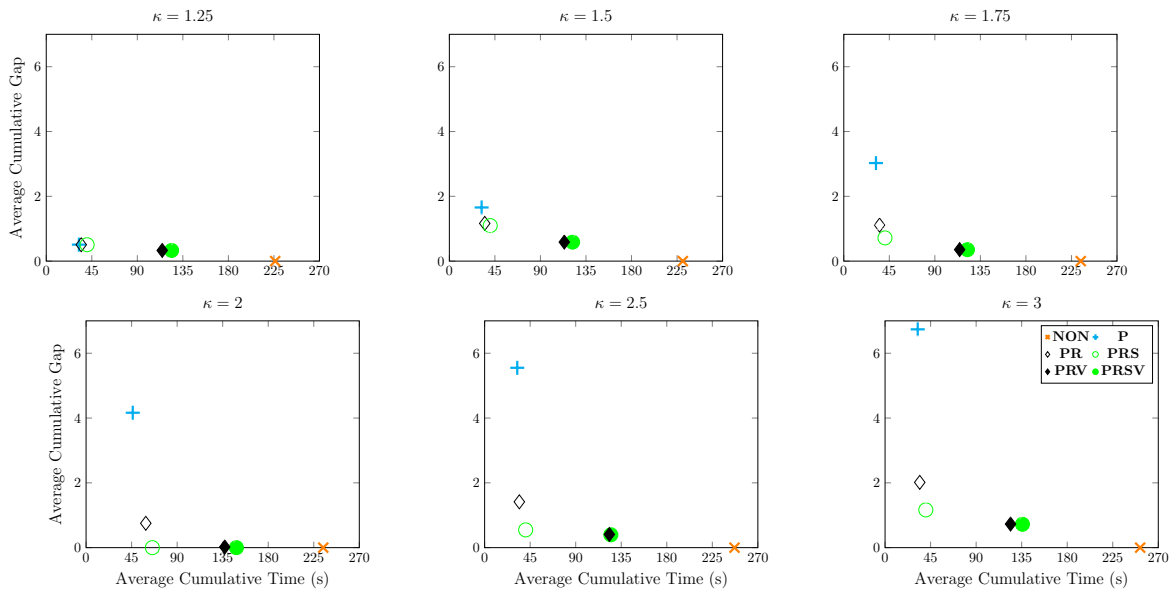
\begin{figure}[H]
	\caption{Average Cumulative Gap vs. Average Cumulative Time with respect to different $\kappa$ values.}
	\label{fig:gapTime-kappa}
	
	\begin{subfigure}{.34\textwidth}
		\begin{tikzpicture}[scale=0.53]
			\begin{axis}[
				title={\large{$\kappa=1.25$}},
				xmin=0, 
				xmax=270, xtick={0,45,...,270}, 
				%xmode=log,
				ymin=0, 
				ymax=7,
				ylabel={\large Average Cumulative Gap},
				% label style={font=\bfseries\boldmath},
				% tick label style={font=\bfseries\boldmath},
				scatter/classes={
					NON={scale=2.5,mark=x,  color=orange, ultra thick}, 
					POW={scale=2.5,mark=+,  color=cyan, ultra thick}, 
					PR={scale=2.5,mark=diamond,  color=black}, 
					%PRSOC={scale=2.5,mark=halfsquare*, color=red, mark color=white, fill=white}, 
					%PRS3={scale=2.5,mark=square, color=blue}, 
					PRSDP={scale=2.5,mark=o,  color=green}, 
					PRv={scale=2.5,mark=diamond*,   color=black}, 
					%PRSOCv={scale=2.5,mark=halfsquare*, color=red, mark color=red, fill=red}, 
					%PRS3v={scale=2.5,mark=square*,   color=blue},
					PRSDPv={scale=2.5,mark=*,  color=green}
				},
				] 
				\addplot[
				scatter, 
				only marks,
				scatter src=explicit symbolic,
				nodes near coords*={\annotvalue},
				node near coord style={ anchor=\anchorvalue, font=\scriptsize, color=\labelcolor},
				visualization depends on={value (\thisrow{annotation}) \as \annotvalue },
				visualization depends on={  value \thisrow{anchor} \as \anchorvalue },
				]
				table[meta=label] {
					x       y            label  annotation  anchor
	226.264	0	NON	1000	south
32.237	0.509	POW	930	south
34.578	0.506	PR	935	south
%108.156	0.506	PRSOC	935	south
%174.763	0.506	PRS3	935	south
40.375	0.506	PRSDP	935	south
123.870	0.330	PRSDPv	954	south
114.604	0.330	PRv	954	south
%187.943	0.330	PRSOCv	954	south
%324.659	0.330	PRS3v	954	south
				};
			\end{axis}
			
		\end{tikzpicture}
	\end{subfigure}
		\begin{subfigure}{.32\textwidth}
		\begin{tikzpicture}[scale=0.53]
			\begin{axis}[
				title={\large{$\kappa=1.5$}},
				xmin=0, 
				xmax=270, xtick={0,45,...,270},
				%xmode=log,
				ymin=0, 
				ymax=7,
                %yticklabel=\empty,
				%    xlabel={\%Gap (delta)}, 
			%	ylabel={\large Average Cumulative Gap},
				% label style={font=\bfseries\boldmath},
				% tick label style={font=\bfseries\boldmath},
				scatter/classes={
					NON={scale=2.5,mark=x,  color=orange, ultra thick}, 
					POW={scale=2.5,mark=+,  color=cyan, ultra thick}, 
					PR={scale=2.5,mark=diamond,  color=black}, 
				%	PRSOC={scale=2.5,mark=halfsquare*, color=red, mark color=white, fill=white}, 
				%	PRS3={scale=2.5,mark=square, color=blue}, 
					PRSDP={scale=2.5,mark=o,  color=green}, 
					PRv={scale=2.5,mark=diamond*,   color=black}, 
				%	PRSOCv={scale=2.5,mark=halfsquare*, color=red, mark color=red, fill=red}, 
				%	PRS3v={scale=2.5,mark=square*,   color=blue},
					PRSDPv={scale=2.5,mark=*,  color=green}
				},
				] 
				\addplot[
				scatter, 
				only marks,
				scatter src=explicit symbolic,
				nodes near coords*={\annotvalue},
				node near coord style={ anchor=\anchorvalue, font=\scriptsize, color=\labelcolor},
				visualization depends on={value (\thisrow{annotation}) \as \annotvalue },
				visualization depends on={  value \thisrow{anchor} \as \anchorvalue },
				]
				table[meta=label] {
					x       y            label  annotation  anchor
230.736	0	NON	1000	south
31.881	1.656	POW	897	north
34.963	1.164	PR	922	north
%103.917	1.102	PRSOC	930	south
%174.849	1.096	PRS3	931	south
40.396	1.096	PRSDP	931	north
121.813	0.587	PRSDPv	942	south
113.594	0.587	PRv	942	south
%187.814	0.587	PRSOCv	942	south
%326.429	0.587	PRS3v	942	south
				};
			\end{axis}
			
		\end{tikzpicture}
	\end{subfigure}
\hfill
    	\begin{subfigure}{.325\textwidth}
		\begin{tikzpicture}[scale=0.53]
			\begin{axis}[
				title={\large{$\kappa=1.75$}},
				xmin=0, 
				xmax=270, xtick={0,45,...,270},
				%xmode=log,
				ymin=0, 
				ymax=7,
                %yticklabel=\empty,
				%    xlabel={\%Gap (delta)}, 
			%	ylabel={\large Average Cumulative Gap},
				% label style={font=\bfseries\boldmath},
				% tick label style={font=\bfseries\boldmath},
				scatter/classes={
					NON={scale=2.5,mark=x,  color=orange, ultra thick}, 
					POW={scale=2.5,mark=+,  color=cyan, ultra thick}, 
					PR={scale=2.5,mark=diamond,  color=black}, 
				%	PRSOC={scale=2.5,mark=halfsquare*, color=red, mark color=white, fill=white}, 
				%	PRS3={scale=2.5,mark=square, color=blue}, 
					PRSDP={scale=2.5,mark=o,  color=green}, 
					PRv={scale=2.5,mark=diamond*,   color=black}, 
				%	PRSOCv={scale=2.5,mark=halfsquare*, color=red, mark color=red, fill=red}, 
				%	PRS3v={scale=2.5,mark=square*,   color=blue},
					PRSDPv={scale=2.5,mark=*,  color=green}
				},
				] 
				\addplot[
				scatter, 
				only marks,
				scatter src=explicit symbolic,
				nodes near coords*={\annotvalue},
				node near coord style={ anchor=\anchorvalue, font=\scriptsize, color=\labelcolor},
				visualization depends on={value (\thisrow{annotation}) \as \annotvalue },
				visualization depends on={  value \thisrow{anchor} \as \anchorvalue },
				]
				table[meta=label] {
					x       y            label  annotation  anchor
234.168	0	NON	1000	south
31.801	3.027	POW	882	south
35.373	1.111	PR	922	south
%104.175	0.743	PRSOC	936	south
%175.365	0.719	PRS3	936	south
40.701	0.717	PRSDP	936	south
122.207	0.353	PRSDPv	945	south
114.509	0.355	PRv	945	south
%188.711	0.354	PRSOCv	945	south
%326.994	0.353	PRS3v	945	south
				};
			\end{axis}
			
		\end{tikzpicture}
	\end{subfigure}
	%%%%%%%%%%%%%%%%%%%%%%%%
	
	\begin{subfigure}{.34\textwidth}
    \centering
		\begin{tikzpicture}[scale=0.53]
			\begin{axis}[
				title={\large{$\kappa=2$}},
				xmin=0, 
				xmax=270, xtick={0,45,...,270},
				%xmode=log,
				ymin=0, 
				ymax=7,
				xlabel={\large Average Cumulative Time (s)}, 
				ylabel={\large Average Cumulative Gap},
                %%yticklabel=\empty,
				% label style={font=\bfseries\boldmath},
				% tick label style={font=\bfseries\boldmath},
				scatter/classes={
					NON={scale=2.5,mark=x,  color=orange, ultra thick}, 
					POW={scale=2.5,mark=+,  color=cyan, ultra thick}, 
					PR={scale=2.5,mark=diamond,  color=black}, 
				%	PRSOC={scale=2.5,mark=halfsquare*, color=red, mark color=white, fill=white}, 
				%	PRS3={scale=2.5,mark=square, color=blue}, 
					PRSDP={scale=2.5,mark=o,  color=green}, 
					PRv={scale=2.5,mark=diamond*,   color=black}, 
				%	PRSOCv={scale=2.5,mark=halfsquare*, color=red, mark color=red, fill=red}, 
				%	PRS3v={scale=2.5,mark=square*,   color=blue},
					PRSDPv={scale=2.5,mark=*,  color=green}
				},
				] 
				\addplot[
				scatter, 
				only marks,
				scatter src=explicit symbolic,
				nodes near coords*={\annotvalue},
				node near coord style={ anchor=\anchorvalue, font=\scriptsize, color=\labelcolor},
				visualization depends on={value (\thisrow{annotation}) \as \annotvalue },
				visualization depends on={  value \thisrow{anchor} \as \anchorvalue },
				]
				table[meta=label] {
					x       y            label  annotation  anchor
234.372	0	NON	1000	south
46.128	4.164	POW	874	south
59.020	0.752	PR	960	south
%107.474	0.028	PRSOC	991	south
%209.304	0.000	PRS3	999	south
65.503	0.000	PRSDP	999	south
148.550	0.000	PRSDPv	999	south
137.054	0.020	PRv	993	south
%182.229	0.006	PRSOCv	997	south
%353.037	0.000	PRS3v	999	south
				};
			\end{axis}
			
		\end{tikzpicture}
	\end{subfigure}
		\begin{subfigure}{.32\textwidth}
            \centering
		\begin{tikzpicture}[scale=0.53]
			\begin{axis}[
				title={\large{$\kappa=2.5$}},
				xmin=0, 
				xmax=270, xtick={0,45,...,270},
				%xmode=log,
				ymin=0, 
				ymax=7,
				xlabel={\large Average Cumulative Time (s)}, 
				%ylabel={\large Average Cumulative Gap},
                %yticklabel=\empty,
				% label style={font=\bfseries\boldmath},
				% tick label style={font=\bfseries\boldmath},
				scatter/classes={
					NON={scale=2.5,mark=x,  color=orange, ultra thick}, 
					POW={scale=2.5,mark=+,  color=cyan, ultra thick}, 
					PR={scale=2.5,mark=diamond,  color=black}, 
				%	PRSOC={scale=2.5,mark=halfsquare*, color=red, mark color=white, fill=white}, 
				%	PRS3={scale=2.5,mark=square, color=blue}, 
					PRSDP={scale=2.5,mark=o,  color=green}, 
					PRv={scale=2.5,mark=diamond*,   color=black}, 
				%	PRSOCv={scale=2.5,mark=halfsquare*, color=red, mark color=red, fill=red}, 
				%	PRS3v={scale=2.5,mark=square*,   color=blue},
					PRSDPv={scale=2.5,mark=*,  color=green}
				},
				] 
				\addplot[
				scatter, 
				only marks,
				scatter src=explicit symbolic,
				nodes near coords*={\annotvalue},
				node near coord style={ anchor=\anchorvalue, font=\scriptsize, color=\labelcolor},
				visualization depends on={value (\thisrow{annotation}) \as \annotvalue },
				visualization depends on={  value \thisrow{anchor} \as \anchorvalue },
				]
				table[meta=label] {
					x       y            label  annotation  anchor
246.926	0	NON	1000	south east
32.249	5.551	POW	871	south east
34.293	1.414	PR	912	south east
%111.117	0.588	PRSOC	929	south east
%172.124	0.548	PRS3	932	south east
40.560	0.548	PRSDP	932	south east
124.734	0.397	PRSDPv	962	south east
123.289	0.408	PRv	961	south east
%185.906	0.403	PRSOCv	962	south east
%320.385	0.397	PRS3v	962	south east
				};
			\end{axis}
			
		\end{tikzpicture}
	\end{subfigure}
\hfill
	\begin{subfigure}{.32\textwidth}
        \centering
		\begin{tikzpicture}[scale=0.53]
			\begin{axis}[
				title={\large{$\kappa=3$}},
				xmin=0, 
				xmax=270, xtick={0,45,...,270},
				%xmode=log,
				ymin=0, 
				ymax=7,
				xlabel={\large Average Cumulative Time (s)}, 
				%ylabel={\large Average Cumulative Gap},
                %yticklabel=\empty,
				% label style={font=\bfseries\boldmath},
				% tick label style={font=\bfseries\boldmath},
				scatter/classes={
					NON={scale=2.5,mark=x,  color=orange, ultra thick}, 
					POW={scale=2.5,mark=+,  color=cyan, ultra thick}, 
					PR={scale=2.5,mark=diamond,  color=black}, 
				%	PRSOC={scale=2.5,mark=halfsquare*, color=red, mark color=white, fill=white}, 
				%	PRS3={scale=2.5,mark=square, color=blue}, 
					PRSDP={scale=2.5,mark=o,  color=green}, 
					PRv={scale=2.5,mark=diamond*,   color=black}, 
				%	PRSOCv={scale=2.5,mark=halfsquare*, color=red, mark color=red, fill=red}, 
				%	PRS3v={scale=2.5,mark=square*,   color=blue},
					PRSDPv={scale=2.5,mark=*,  color=green}
				},
           legend entries={\textbf{NON}, \textbf{P}, \textbf{PR},  \textbf{PRS}, \textbf{PRV},   \textbf{PRSV}},
                    legend style={
        legend columns=2},
				] 
				\addplot[
				scatter, 
				only marks,
				scatter src=explicit symbolic,
				nodes near coords*={\annotvalue},
				node near coord style={ anchor=\anchorvalue, font=\scriptsize, color=\labelcolor},
				visualization depends on={value (\thisrow{annotation}) \as \annotvalue },
				visualization depends on={  value \thisrow{anchor} \as \anchorvalue },
				]
				table[meta=label] {
					x       y            label  annotation  anchor
252.109	0	NON	1000	south
32.277	6.738	POW	871	north
34.278	2.015	PR	892	south
%104.752	1.215	PRSOC	908	south
%171.949	1.166	PRS3	911	south
40.294	1.165	PRSDP	911	south
135.781	0.722	PRSDPv	958	south
124.013	0.728	PRv	957	south
%182.296	0.725	PRSOCv	957	south
%321.953	0.722	PRS3v	958	south
				};
			\end{axis}
			
		\end{tikzpicture}
	\end{subfigure}
    \end{figure}

In terms of computational cost, we do not observe a significant difference or a consistent trend for different values of $\kappa$.
%
%
%%%%%%%%%%%%%%%%%%%%%%%%%%%%%%%%%%%%%%%%%%%%%%%%%%%%%%%
%
%

\subsubsection{The effect of dimension $n$}
\label{sec:comp-results-dimension}

 Finally, we analyze the effect of dimension~$n$  on the results, which are reported 
in Figure~\ref{fig:gapTime-n}. The most interesting observation here is that the average cumulative gap of the \textbf{P} relaxation is highest when $n=3$. 
%{This behavior is somewhat consistent with Proposition~\ref{prop:dbound-stSimplex-lb}, in which we have shown that the lower bound on the worst-case distance between  the nonconvex set  $\cS^\kappa$ and its convex hull is attained for small values of  $n$.} 
We also note that the average cumulative gap gradually decreases with higher values of $n$, which is in agreement with the  simulation results that we have reported in Section~\ref{sec:objFunc-approx} (see Tables~\ref{tab:simulateUniform} and~\ref{tab:simulateNormal}) and Theorem~\ref{thm:POWexactUniform}. The other relaxations give  similar average cumulative gaps for small values of $n$  whereas their gaps are getting more different with larger values of $n$. 

\begin{figure}[H]
	\caption{Average Cumulative Gap vs. Average Cumulative Time with respect to different $n$ values.}
	\label{fig:gapTime-n}
	
	\begin{subfigure}{.34\textwidth}
		\begin{tikzpicture}[scale=0.53]
			\begin{axis}[
				title={\large{$n=2$}},
				xmin=0, 
                xtick={0,45,...,270},
				xmax=270, xtick={0,45,...,270},
				%xmode=log,
				ymin=0, 
				ymax=5,
				ylabel={\large Average Cumulative Gap},
				% label style={font=\bfseries\boldmath},
				% tick label style={font=\bfseries\boldmath},
				scatter/classes={
					NON={scale=2.5,mark=x,  color=orange, ultra thick}, 
					POW={scale=2.5,mark=+,  color=cyan, ultra thick}, 
					PR={scale=2.5,mark=diamond,  color=black}, 
					PRSOC={scale=2.5,mark=halfsquare*, color=red, mark color=white, fill=white}, 
					PRS3={scale=2.5,mark=square, color=blue}, 
					PRSDP={scale=2.5,mark=o,  color=green}, 
					PRv={scale=2.5,mark=diamond*,   color=black}, 
					PRSOCv={scale=2.5,mark=halfsquare*, color=red, mark color=red, fill=red}, 
					PRS3v={scale=2.5,mark=square*,   color=blue},
					PRSDPv={scale=2.5,mark=*,  color=green}
				},
				] 
				\addplot[
				scatter, 
				only marks,
				scatter src=explicit symbolic,
				nodes near coords*={\annotvalue},
				node near coord style={ anchor=\anchorvalue, font=\scriptsize, color=\labelcolor},
				visualization depends on={value (\thisrow{annotation}) \as \annotvalue },
				visualization depends on={  value \thisrow{anchor} \as \anchorvalue },
				]
				table[meta=label] {
					x       y            label  annotation  anchor
221.157	0	NON	1000	south east
26.772	3.520	POW	911	south east
19.839	0.601	PR	948	south east
%27.800	0.601	PRSOC	948	south east
%21.378	0.601	PRS3	948	south east
23.448	0.601	PRSDP	948	south east
30.540	0.461	PRSDPv	959	south east
27.149	0.461	PRv	959	south east
%34.766	0.461	PRSOCv	959	south east
%33.372	0.461	PRS3v	959	south east
				};
			\end{axis}
			
		\end{tikzpicture}
	\end{subfigure}
    \begin{subfigure}{.32\textwidth}
    \centering
		\begin{tikzpicture}[scale=0.53]
			\begin{axis}[
				title={\large{$n=3$}},
				xmin=0, 
                xtick={0,45,...,270},
				xmax=270, xtick={0,45,...,270},
				%xmode=log,
				ymin=0, 
				ymax=5,
			%	xlabel={\large Average Cumulative Time (s)}, 
			%	ylabel={\large Average Cumulative Gap},
                %%yticklabel=\empty,
				% label style={font=\bfseries\boldmath},
				% tick label style={font=\bfseries\boldmath},
				scatter/classes={
					NON={scale=2.5,mark=x,  color=orange, ultra thick}, 
					POW={scale=2.5,mark=+,  color=cyan, ultra thick}, 
					PR={scale=2.5,mark=diamond,  color=black}, 
					PRSOC={scale=2.5,mark=halfsquare*, color=red, mark color=white, fill=white}, 
					PRS3={scale=2.5,mark=square, color=blue}, 
					PRSDP={scale=2.5,mark=o,  color=green}, 
					PRv={scale=2.5,mark=diamond*,   color=black}, 
					PRSOCv={scale=2.5,mark=halfsquare*, color=red, mark color=red, fill=red}, 
					PRS3v={scale=2.5,mark=square*,   color=blue},
					PRSDPv={scale=2.5,mark=*,  color=green}
				},
				] 
				\addplot[
				scatter, 
				only marks,
				scatter src=explicit symbolic,
				nodes near coords*={\annotvalue},
				node near coord style={ anchor=\anchorvalue, font=\scriptsize, color=\labelcolor},
				visualization depends on={value (\thisrow{annotation}) \as \annotvalue },
				visualization depends on={  value \thisrow{anchor} \as \anchorvalue },
				]
				table[meta=label] {
					x       y            label  annotation  anchor
227.087	0	NON	1000	south east
29.842	4.687	POW	878	north east
21.892	1.175	PR	922	south east
%40.400	0.874	PRSOC	931	south east
%41.689	0.859	PRS3	932	south east
25.807	0.859	PRSDP	932	south east
46.148	0.565	PRSDPv	951	south east
41.418	0.572	PRv	950	south east
%58.223	0.568	PRSOCv	951	south east
%60.172	0.565	PRS3v	951	south east
				};
			\end{axis}
			
		\end{tikzpicture}
	\end{subfigure}
    \hfill
    	\begin{subfigure}{.32\textwidth}
    \centering
		\begin{tikzpicture}[scale=0.53]
			\begin{axis}[
				title={\large{$n=4$}},
				xmin=0, 
                xtick={0,45,...,270},
				xmax=270, xtick={0,45,...,270},
				%xmode=log,
				ymin=0, 
				ymax=5,
				%xlabel={\large Average Cumulative Time (s)}, 
				%ylabel={\large Average Cumulative Gap},
                %%yticklabel=\empty,
				% label style={font=\bfseries\boldmath},
				% tick label style={font=\bfseries\boldmath},
				scatter/classes={
					NON={scale=2.5,mark=x,  color=orange, ultra thick}, 
					POW={scale=2.5,mark=+,  color=cyan, ultra thick}, 
					PR={scale=2.5,mark=diamond,  color=black}, 
					PRSOC={scale=2.5,mark=halfsquare*, color=red, mark color=white, fill=white}, 
					PRS3={scale=2.5,mark=square, color=blue}, 
					PRSDP={scale=2.5,mark=o,  color=green}, 
					PRv={scale=2.5,mark=diamond*,   color=black}, 
					PRSOCv={scale=2.5,mark=halfsquare*, color=red, mark color=red, fill=red}, 
					PRS3v={scale=2.5,mark=square*,   color=blue},
					PRSDPv={scale=2.5,mark=*,  color=green}
				},
				] 
				\addplot[
				scatter, 
				only marks,
				scatter src=explicit symbolic,
				nodes near coords*={\annotvalue},
				node near coord style={ anchor=\anchorvalue, font=\scriptsize, color=\labelcolor},
				visualization depends on={value (\thisrow{annotation}) \as \annotvalue },
				visualization depends on={  value \thisrow{anchor} \as \anchorvalue },
				]
				table[meta=label] {
					x       y            label  annotation  anchor
232.710	0	NON	1000	south east
33.620	4.156	POW	868	south east
26.387	1.336	PR	910	south east
%55.797	0.838	PRSOC	925	south east
%60.974	0.820	PRS3	928	south east
32.255	0.820	PRSDP	928	south east
64.361	0.488	PRSDPv	950	south east
59.050	0.495	PRv	948	south east
%88.447	0.491	PRSOCv	949	south east
%99.055	0.488	PRS3v	950	south east
				};
			\end{axis}
			
		\end{tikzpicture}
	\end{subfigure}

		\begin{subfigure}{.34\textwidth}
		\begin{tikzpicture}[scale=0.53]
			\begin{axis}[
				title={\large{$n=5$}},
				xmin=0, 
				xmax=270, xtick={0,45,...,270},
				ymin=0, 
				ymax=5,
               % %yticklabel=\empty,
				%    xlabel={\%Gap (delta)}, 
			ylabel={\large Average Cumulative Gap},
				% label style={font=\bfseries\boldmath},
				% tick label style={font=\bfseries\boldmath},
				scatter/classes={
					NON={scale=2.5,mark=x,  color=orange, ultra thick}, 
					POW={scale=2.5,mark=+,  color=cyan, ultra thick}, 
					PR={scale=2.5,mark=diamond,  color=black}, 
					PRSOC={scale=2.5,mark=halfsquare*, color=red, mark color=white, fill=white}, 
					PRS3={scale=2.5,mark=square, color=blue}, 
					PRSDP={scale=2.5,mark=o,  color=green}, 
					PRv={scale=2.5,mark=diamond*,   color=black}, 
					PRSOCv={scale=2.5,mark=halfsquare*, color=red, mark color=red, fill=red}, 
					PRS3v={scale=2.5,mark=square*,   color=blue},
					PRSDPv={scale=2.5,mark=*,  color=green}
				},
				] 
				\addplot[
				scatter, 
				only marks,
				scatter src=explicit symbolic,
				nodes near coords*={\annotvalue},
				node near coord style={ anchor=\anchorvalue, font=\scriptsize, color=\labelcolor},
				visualization depends on={value (\thisrow{annotation}) \as \annotvalue },
				visualization depends on={  value \thisrow{anchor} \as \anchorvalue },
				]
				table[meta=label] {
					x       y            label  annotation  anchor
237.948	0	NON	1000	south east
35.346	4.143	POW	873	south east
33.913	1.418	PR	912	south east
%78.322	0.824	PRSOC	929	south east
%92.027	0.796	PRS3	933	south east
36.139	0.796	PRSDP	933	south east
90.187	0.474	PRSDPv	953	south east
79.620	0.482	PRv	952	south east
%120.313	0.477	PRSOCv	953	south east
%158.413	0.474	PRS3v	953	south east
				};
			\end{axis}
			
		\end{tikzpicture}
	\end{subfigure}
		\begin{subfigure}{.32\textwidth}
            \centering
		\begin{tikzpicture}[scale=0.53]
			\begin{axis}[
				title={\large{$n=6$}},
				xmin=0, 
				xmax=270, xtick={0,45,...,270},
				%xmode=log,
				ymin=0, 
				ymax=5,
			%	xlabel={\large Average Cumulative Time (s)}, 
				%ylabel={\large Average Cumulative Gap},
                %yticklabel=\empty,
				% label style={font=\bfseries\boldmath},
				% tick label style={font=\bfseries\boldmath},
				scatter/classes={
					NON={scale=2.5,mark=x,  color=orange, ultra thick}, 
					POW={scale=2.5,mark=+,  color=cyan, ultra thick}, 
					PR={scale=2.5,mark=diamond,  color=black}, 
					PRSOC={scale=2.5,mark=halfsquare*, color=red, mark color=white, fill=white}, 
					PRS3={scale=2.5,mark=square, color=blue}, 
					PRSDP={scale=2.5,mark=o,  color=green}, 
					PRv={scale=2.5,mark=diamond*,   color=black}, 
					PRSOCv={scale=2.5,mark=halfsquare*, color=red, mark color=red, fill=red}, 
					PRS3v={scale=2.5,mark=square*,   color=blue},
					PRSDPv={scale=2.5,mark=*,  color=green}
				},
				] 
				\addplot[
				scatter, 
				only marks,
				scatter src=explicit symbolic,
				nodes near coords*={\annotvalue},
				node near coord style={ anchor=\anchorvalue, font=\scriptsize, color=\labelcolor},
				visualization depends on={value (\thisrow{annotation}) \as \annotvalue },
				visualization depends on={  value \thisrow{anchor} \as \anchorvalue },
				]
				table[meta=label] {
					x       y            label  annotation  anchor
239.787	0	NON	1000	south east
34.870	3.824	POW	880	south east
38.347	1.339	PR	918	south east
%94.191	0.694	PRSOC	936	south east
%136.466	0.659	PRS3	939	south east
42.600	0.658	PRSDP	939	south east
122.508	0.357	PRSDPv	961	south east
104.968	0.370	PRv	959	south east
%159.394	0.361	PRSOCv	960	south east
%238.788	0.357	PRS3v	961	south east
				};
			\end{axis}
			
		\end{tikzpicture}
	\end{subfigure}
    \hfill
\begin{subfigure}{.32\textwidth}
            \centering
		\begin{tikzpicture}[scale=0.53]
			\begin{axis}[
				title={\large{$n=7$}},
				xmin=0, 
				xmax=270, xtick={0,45,...,270},
				%xmode=log,
				ymin=0, 
				ymax=5,
				%xlabel={\large Average Cumulative Time (s)}, 
				%ylabel={\large Average Cumulative Gap},
                %yticklabel=\empty,
				% label style={font=\bfseries\boldmath},
				% tick label style={font=\bfseries\boldmath},
				scatter/classes={
					NON={scale=2.5,mark=x,  color=orange, ultra thick}, 
					POW={scale=2.5,mark=+,  color=cyan, ultra thick}, 
					PR={scale=2.5,mark=diamond,  color=black}, 
					PRSOC={scale=2.5,mark=halfsquare*, color=red, mark color=white, fill=white}, 
					PRS3={scale=2.5,mark=square, color=blue}, 
					PRSDP={scale=2.5,mark=o,  color=green}, 
					PRv={scale=2.5,mark=diamond*,   color=black}, 
					PRSOCv={scale=2.5,mark=halfsquare*, color=red, mark color=red, fill=red}, 
					PRS3v={scale=2.5,mark=square*,   color=blue},
					PRSDPv={scale=2.5,mark=*,  color=green}
				},
				] 
				\addplot[
				scatter, 
				only marks,
				scatter src=explicit symbolic,
				nodes near coords*={\annotvalue},
				node near coord style={ anchor=\anchorvalue, font=\scriptsize, color=\labelcolor},
				visualization depends on={value (\thisrow{annotation}) \as \annotvalue },
				visualization depends on={  value \thisrow{anchor} \as \anchorvalue },
				]
				table[meta=label] {
					x       y            label  annotation  anchor
242.778	0	NON	1000	south east
35.297	3.537	POW	887	south east
46.573	1.283	PR	920	south east
%120.172	0.687	PRSOC	937	south east
%190.440	0.653	PRS3	940	south east
50.719	0.653	PRSDP	940	south east
142.417	0.352	PRSDPv	963	south east
133.004	0.358	PRv	962	south east
%220.323	0.354	PRSOCv	962	south east
%350.255	0.352	PRS3v	963	south east
				};
			\end{axis}
			
		\end{tikzpicture}
	\end{subfigure}

    	\begin{subfigure}{.34\textwidth}
		\begin{tikzpicture}[scale=0.53]
			\begin{axis}[
				title={\large{$n=8$}},
				xmin=0, 
                xtick={0,45,...,270},
				xmax=270, xtick={0,45,...,270},
				%xmode=log,
				ymin=0, 
				ymax=5,
			    xlabel={\large Average Cumulative Time (s)}, 
			ylabel={\large Average Cumulative Gap},
				% label style={font=\bfseries\boldmath},
				% tick label style={font=\bfseries\boldmath},
				scatter/classes={
					NON={scale=2.5,mark=x,  color=orange, ultra thick}, 
					POW={scale=2.5,mark=+,  color=cyan, ultra thick}, 
					PR={scale=2.5,mark=diamond,  color=black}, 
					PRSOC={scale=2.5,mark=halfsquare*, color=red, mark color=white, fill=white}, 
					PRS3={scale=2.5,mark=square, color=blue}, 
					PRSDP={scale=2.5,mark=o,  color=green}, 
					PRv={scale=2.5,mark=diamond*,   color=black}, 
					PRSOCv={scale=2.5,mark=halfsquare*, color=red, mark color=red, fill=red}, 
					PRS3v={scale=2.5,mark=square*,   color=blue},
					PRSDPv={scale=2.5,mark=*,  color=green}
				},
				] 
				\addplot[
				scatter, 
				only marks,
				scatter src=explicit symbolic,
				nodes near coords*={\annotvalue},
				node near coord style={ anchor=\anchorvalue, font=\scriptsize, color=\labelcolor},
				visualization depends on={value (\thisrow{annotation}) \as \annotvalue },
				visualization depends on={  value \thisrow{anchor} \as \anchorvalue },
				]
				table[meta=label] {
					x       y            label  annotation  anchor
243.463	0	NON	1000	south east
35.458	3.103	POW	893	south east
49.130	1.223	PR	925	south east
%151.174	0.697	PRSOC	941	south east
%254.877	0.661	PRS3	944	south east
59.067	0.660	PRSDP	944	south east
181.262	0.357	PRSDPv	963	south east
168.408	0.363	PRv	961	south east
%269.813	0.359	PRSOCv	962	south east
%480.194	0.357	PRS3v	963	south east
				};
			\end{axis}
			
		\end{tikzpicture}
	\end{subfigure}
	%%%%%%%%%%%%%%%%%%%%%%%%
	\begin{subfigure}{.32\textwidth}
        \centering
		\begin{tikzpicture}[scale=0.53]
			\begin{axis}[
				title={\large{$n=9$}},
				xmin=0, 
                xtick={0,45,...,270},
				xmax=270, xtick={0,45,...,270},
				%xmode=log,
				ymin=0, 
				ymax=5,
				xlabel={\large Average Cumulative Time (s)}, 
				%ylabel={\large Average Cumulative Gap},
                %yticklabel=\empty,
				% label style={font=\bfseries\boldmath},
				% tick label style={font=\bfseries\boldmath},
				scatter/classes={
					NON={scale=2.5,mark=x,  color=orange, ultra thick}, 
					POW={scale=2.5,mark=+,  color=cyan, ultra thick}, 
					PR={scale=2.5,mark=diamond,  color=black}, 
					PRSOC={scale=2.5,mark=halfsquare*, color=red, mark color=white, fill=white}, 
					PRS3={scale=2.5,mark=square, color=blue}, 
					PRSDP={scale=2.5,mark=o,  color=green}, 
					PRv={scale=2.5,mark=diamond*,   color=black}, 
					PRSOCv={scale=2.5,mark=halfsquare*, color=red, mark color=red, fill=red}, 
					PRS3v={scale=2.5,mark=square*,   color=blue},
					PRSDPv={scale=2.5,mark=*,  color=green}
				},
        %   legend entries={NON, POW, PR, PRSOC, PRS3, PRSDP, PRv, PRSOCv, PRS3v, PRSDPv},
       %             legend style={
        %at={(1.03,1.15)}, anchor=north,
       % legend columns=2},
				] 
				\addplot[
				scatter, 
				only marks,
				scatter src=explicit symbolic,
				nodes near coords*={\annotvalue},
				node near coord style={ anchor=\anchorvalue, font=\scriptsize, color=\labelcolor},
				visualization depends on={value (\thisrow{annotation}) \as \annotvalue },
				visualization depends on={  value \thisrow{anchor} \as \anchorvalue },
				]
				table[meta=label] {
					x       y            label  annotation  anchor
245.389	0	NON	1000	south east
37.425	2.838	POW	897	south east
52.451	1.062	PR	930	south east
%179.946	0.551	PRSOC	949	south east
%359.351	0.519	PRS3	953	south east
61.423	0.517	PRSDP	953	south east
220.986	0.296	PRSDPv	971	south east
209.418	0.302	PRv	969	south east
%321.950	0.299	PRSOCv	970	south east
%665.176	0.296	PRS3v	971	south east
				};
			\end{axis}
			
		\end{tikzpicture}
	\end{subfigure}
    %%%%%%%%%%%%%%%%%%%%%%%%	%
%
\hfill
	\begin{subfigure}{.32\textwidth}
        \centering
		\begin{tikzpicture}[scale=0.53]
			\begin{axis}[
				title={\large{$n=10$}},
				xmin=0, 
				xmax=270, xtick={0,45,...,270},
				%xmode=log,
				ymin=0, 
				ymax=5,
				xlabel={\large Average Cumulative Time (s)}, 
				%ylabel={\large Average Cumulative Gap},
                %yticklabel=\empty,
				% label style={font=\bfseries\boldmath},
				% tick label style={font=\bfseries\boldmath},
				scatter/classes={
					NON={scale=2.5,mark=x,  color=orange, ultra thick}, 
					POW={scale=2.5,mark=+,  color=cyan, ultra thick}, 
					PR={scale=2.5,mark=diamond,  color=black}, 
				%	PRSOC={scale=2.5,mark=halfsquare*, color=red, mark color=white, fill=white}, 
				%	PRS3={scale=2.5,mark=square, color=blue}, 
					PRSDP={scale=2.5,mark=o,  color=green}, 
					PRv={scale=2.5,mark=diamond*,   color=black}, 
				%	PRSOCv={scale=2.5,mark=halfsquare*, color=red, mark color=red, fill=red}, 
				%	PRS3v={scale=2.5,mark=square*,   color=blue},
					PRSDPv={scale=2.5,mark=*,  color=green}
				},
           legend entries={\textbf{NON}, \textbf{P}, \textbf{PR},  \textbf{PRS}, \textbf{PRV},   \textbf{PRSV}},
                    legend style={
        legend columns=2},
				] 
				\addplot[
				scatter, 
				only marks,
				scatter src=explicit symbolic,
				nodes near coords*={\annotvalue},
				node near coord style={ anchor=\anchorvalue, font=\scriptsize, color=\labelcolor},
				visualization depends on={value (\thisrow{annotation}) \as \annotvalue },
				visualization depends on={  value \thisrow{anchor} \as \anchorvalue },
				]
				table[meta=label] {
					x       y            label  annotation  anchor
246.544	0	NON	1000	south east
41.230	2.658	POW	902	south east
60.226	1.008	PR	931	south east
%211.585	0.505	PRSOC	948	south east
%460.328	0.484	PRS3	950	south east
70.285	0.484	PRSDP	950	south east
267.023	0.235	PRSDPv	970	south east
267.560	0.241	PRv	969	south east
%399.119	0.238	PRSOCv	969	south east
%874.760	0.235	PRS3v	970	south east
				};
			\end{axis}
			
		\end{tikzpicture}
	\end{subfigure}
    
    \end{figure}
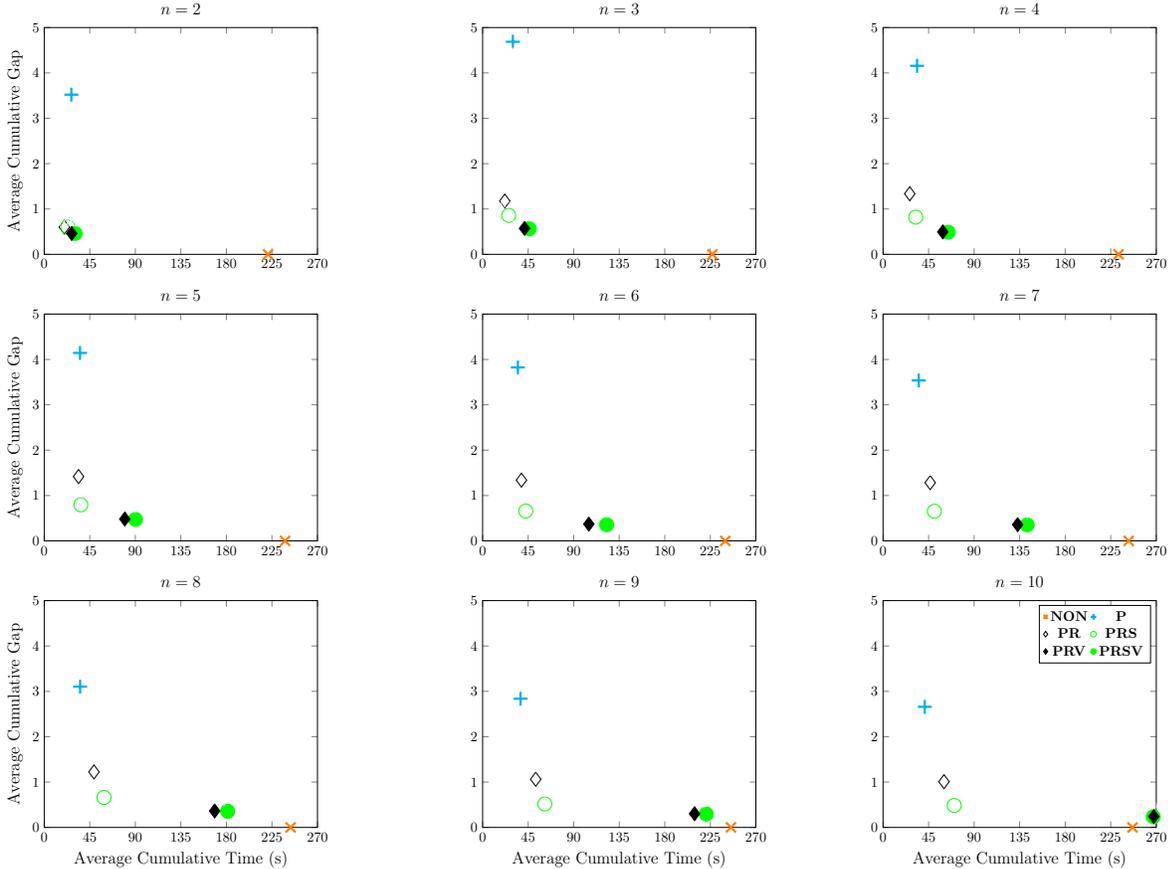
In terms of computational cost, we observe a slight increase in the   solution of the \textbf{NON} model  with $n$ whereas the times of \textbf{P}, \textbf{PR} and \textbf{PRS} relaxations increase moderately. We note that the times of \textbf{PRV} and \textbf{PRSV} relaxations are most sensitive to the dimension $n$ and they become even more expensive to solve than the \textbf{NON} model  for $n=10$. Combined with the fact that the effect of the RPT constraints~\eqref{eq:rpt}  diminishes with dimension, it is not  advisable to use them in higher dimensions.

\section{Conclusion}
\label{sec:conclusion}

%In this paper, we studied the set $\cS^\kappa$, which is directly related to   separable standard quadratic programming and appears as a substructure in potential-based flow networks. We proposed several conic relaxations for this nonconvex set, and obtained the exact convex hull in low-dimensions for $\kappa=2$ using the \textbf{PR} and \textbf{PRS} relaxations. We provided  justifications as to why the \textbf{P} relaxation, which is the weakest relaxation we consider, is expected to provide tight bounds when a linear function is optimized over it. In particular, we derived worst-case bounds on the distance between the \textbf{P} relaxation and the nonconvex set, and performed a probabilistic analysis which demonstrates that this relaxation is exact with high probability in higher dimensions. 
%%Beyond $\textbf{P}$, our results that RLT relaxation is very important. In particular: (i)  $\textbf{PR}$ gives significantly better bounds, and (ii) while addition of PSD or RPT constraints do not add any value on top on $\textbf{P}$ when added to $\textbf{PR}$ they produce significant improvement to bounds. Finally, $\textbf{PRS}$ and  $\textbf{PRV}$, are both improvements over $\textbf{PR}$, with $\textbf{PRS}$ faster to run, but giving slightly worse bounds than $\textbf{PRV}$. 
%We also carried out an extensive set of experiments, comparing the nine relaxations we proposed.

%\textcolor{blue}
{In this paper, we studied the nonconvex set $\cS^\kappa$, which is directly related to   separable standard quadratic programming and appears as a substructure in potential-based flow networks. We proposed several conic relaxations for this nonconvex set, and compared these relaxations both theoretically  and via extensive computational  experiments. Our main conclusions are as follows. The $\textbf{P}$ relaxation, which is the weakest relaxation among those considered, provides quite strong bounds when a linear function is optimized over it. In particular, we derived worst-case bounds on the distance between the $\textbf{P}$ relaxation and $\cS^\kappa$, and performed a probabilistic analysis which demonstrates that this relaxation is exact with high probability in higher dimensions. Beyond $\textbf{P}$, our results highlight that the RLT relaxation is very important. In particular, the exact convex hull in low-dimensions for $\kappa=2$ is obtained using the $\textbf{PR}$ relaxation. Computationally, we see that $\textbf{PR}$ gives significantly better bounds in comparison to $\textbf{P}$. Interestingly, we proved that addition of PSD constraints do not add any value on top on $\textbf{P}$, while when these are added to $\textbf{PR}$ can produce non-trivial improvement to bounds. Finally, $\textbf{PRS}$ and  $\textbf{PRV}$ are incomparable to each other, both produce improvements over $\textbf{PR}$, with $\textbf{PRS}$ being more efficient to solve, but giving slightly worse bounds than $\textbf{PRV}$ on average. 
}

We would like to note that although the separable function $\sum_{j=1}^n (\alpha_j x_j + \beta_j x_j^\kappa)$ we consider in this study may seem special at first glance, most of our results extend to the more general case where we replace each    function  $\alpha_j x_j + \beta_j x_j^\kappa$ with a general convex function of $x_j$ that is non-decreasing on the ground set. 

There are several future research avenues we would like to explore. For instance, resolving Conjecture~\ref{conj:exactness-kappa=2-stSimplexn-any}, which states that the \textbf{PRS} relaxation is exact for $\kappa=2$, and Conjecture~\ref{conj:PRS=PRs3}, which states that  \textbf{PRS${^3}$} and \textbf{PRS} relaxations are equivalent,  are two immediate directions for further theoretical analysis. Moreover, analyzing the cases with a separable function with multiple exponents, exponents where $\kappa<1$ and   different ground sets from both theoretical and empirical aspects are also promising.  

% {\color{red}some\\ conclusion\\ to\\ be\\ added...

% repeat the conjecture, talk about possible extensions, different exponents, specific cases etc.

% future: more experiments with different ground sets

% %almost all results work for general convex increasing function  (not only $x^\kappa$)
% }

\section*{Acknowledgments}
We gratefully acknowledge several helpful discussions with Nick Sahinidis on this topic.

\bibliographystyle{plain}
\bibliography{references}

\end{document}